\numberwithin{equation}{section}
\theoremstyle{definition}
\newtheorem{definition}{Definition}[section]
\theoremstyle{remark}
\newtheorem{remark}[definition]{Remark}
\theoremstyle{plain}
\newtheorem{theorem}[definition]{Theorem}
\newtheorem{lemma}[definition]{Lemma}
\newtheorem{proposition}[definition]{Proposition}
\newcommand{\tl}{\tilde}
\newcommand{\lt}{L^2(0, 2\pi)}
\newcommand{\tor}{(0, 2\pi)}
\newcommand{\intg}{\int_{0}^{2\pi}}
\newcommand{\z}{\mathbb{Z}}
\newcommand{\zahl}{\mathbb{Z}}
\newcommand{\nat}{\mathbb{N}}
\newcommand{\ip}[2]{\left<{#1},{#2}\right>}
\newcommand{\norm}[1]{\left\|#1\right\|}
\newcommand{\abs}[1]{\left|#1\right|}
\date{\today}
\newcommand{\dist}{{\rm dist}}
\newcommand{\cplx}{\mathbb{C}}
\newcommand{\rea}{\mathbb{R}}
\begin{document}
	\title[Control of the stabilized Kuramoto-Sivashinsky]{Null controllability of the linear Stabilized Kuramoto-Sivashinsky System using moment method}
	\author[Manish Kumar and Subrata Majumdar]{Manish Kumar$^{\dagger,1}$ and Subrata Majumdar$^{\dagger,2}$$^*$ }
	\thanks{$^{\dagger}$Indian Institute of Science Education and Research Kolkata, Campus road, Mohanpur, West Bengal 741246, India;
		$^{1}$\url{mk19ip001@iiserkol.ac.in}, $^{2}$\url{sm18rs016@iiserkol.ac.in}}
	\address{Department of Mathematics and Statistics, Indian Institute of Science Education and Research Kolkata,
		Mohanpur -- 741 246}
	\email{mk19ip001@iiserkol.ac.in, sm18rs016@iiserkol.ac.in}
	\keywords{Stabilized Kuramoto-Sivashinsky equation, Kuramoto-Sivashinsky-Korteweg-de Vries equation, Heat equation, moment method, biorthogonal family, null controllability}
	\subjclass[2020]{93B05, 93C20, 35K52, 30E05 }
	\thanks{$^*$Corresponding author}
	\thanks{Manish Kumar is supported by Prime Minister Research Fellowship, Govt. of India and Subrata Majumdar is supported by Department of Atomic Energy  and National Board for Higher Mathematics fellowship.}
	\begin{abstract}
		This paper deals with the null controllability of a coupled parabolic system, which is the Kuramoto-Sivashinsky-Korteweg-de Vries equation coupled with heat equation through first order derivatives. More precisely, we prove the null controllability of the system with a single localized bilinear interior control acting on either of the components of the coupled system, and with a single periodic boundary control acting through zeroth order derivatives of either of the components. We employ the well-known moment method to study the controllability of the concerned system.
	\end{abstract}
	\maketitle
	\tableofcontents
	\section{Introduction and main result} 
	\subsection{Setting of the problem}
	One dimensional Kuramoto-Sivanshinsky (KS in short) equation, which reads as
	\begin{equation}\label{KS}
		u_t+\gamma u_{xxxx}+au_{xx}+uu_x=0,
	\end{equation}
	appears in many physical phenomena like phase turbulence and wave propagation in reaction-diffusion systems (see \cite{Kur78}, \cite{Kur75}, \cite{Kur76}),  flame front propagation (see \cite{Siv77}), where $\gamma>0, a>0$ are coefficients accounting for the long-wave instabilities and the short wave dissipation, respectively. Benney in \cite{ben66} added the KdV term $u_{xxx}$ in KS equation \eqref{KS} to include dispersive effects in the system. The resultant equation
	\begin{equation}\label{KS-KdV}
		u_t+\gamma u_{xxxx}+u_{xxx}+au_{xx}+uu_x=0,
	\end{equation} is called Kuramoto-Sivashinsky-Korteweg-de Vries (KS-KdV) equation, which is used to study a wide range of nonlinear dissipative waves. The solitary-pulse solutions of the KS-KdV equation \eqref{KS-KdV} are found to be unstable and so in \cite{MFK01}, the authors coupled the KS-KdV equation with an extra-dissipative equation (heat equation) in one dimensional setup, which resulted in the existence of stable solitary-pulses due to the combination of dissipative and dispersive features. For this reason, the system is also known as stabilized Kuramoto-Sivashinsky equation, and is given by
	\begin{equation}\label{KS-KdV_heat}
		\begin{cases}
			u_t+\gamma u_{xxxx}+u_{xxx}+au_{xx}+uu_x=v_x,\\
			v_t-\Gamma  v_{xx}+\nu v_x=u_x,
		\end{cases}
	\end{equation}
	where, $\Gamma>0$ is the dissipative parameter (effective diffusion coefficient), and $\nu\in \rea\setminus\{0\}$ is the group velocity mismatch between the wave modes.
	%	\subsection{Description of the problem}
	
	Let $T>0$ be any real number. This paper studies the null controllability of the linearized version of the aforementioned coupled system \eqref{KS-KdV_heat}, posed in $(0,T)\times (0,2\pi)$ with periodic boundary conditions. For our study, we set all the system parameters equal to $1$. More precisely, we consider the following system:
	\begin{equation}\label{eq:main}
		\begin{cases}
			u_t+ u_{xxxx}+u_{xxx}+u_{xx}=v_x,& (t,x)\in(0,T)\times(0,2\pi),\\
			v_t- v_{xx}+ v_x=u_x,& (t,x)\in(0,T)\times(0,2\pi),\\
			u(t,0)=u(t,2\pi),u_x(t,0)=u_x(t, 2\pi), & t\in (0,T),\\
			u_{xx}(t,0)=u_{xx}(t, 2\pi), u_{xxx}(t,0)=u_{xxx}(t, 2\pi), & t\in (0,T),\\
			v(t,0)=v(t,2\pi), v_x(t,0)=v_x(t, 2\pi), & t\in (0,T),\\
			u(0,x)=u_0(x), v(0,x)=v_0(x), & x \in (0, 2\pi).
		\end{cases}
	\end{equation}
	
	In order to define the notion of null controllability, let us consider the following general control system corresponding to the above system:
	\begin{equation}\label{eq:cntrl}
		\begin{cases}
			u_t+ u_{xxxx}+u_{xxx}+u_{xx}=v_x+h_1,& (t,x)\in(0,T)\times(0,2\pi),\\
			v_t- v_{xx}+ v_x=u_x+h_2,& (t,x)\in(0,T)\times(0,2\pi),\\
			u(t,0)=u(t,2\pi)+q_1(t),u_x(t,0)=u_x(t, 2\pi), & t\in (0,T),\\
			u_{xx}(t,0)=u_{xx}(t, 2\pi), u_{xxx}(t,0)=u_{xxx}(t, 2\pi), & t\in (0,T),\\
			v(t,0)=v(t,2\pi)+q_2(t), v_x(t,0)=v_x(t, 2\pi), & t\in (0,T),\\
			u(0,x)=u_0(x), v(0,x)=v_0(x), & x \in (0, 2\pi),
		\end{cases}
	\end{equation}
	where the functions $h_1(x,t),h_2(x,t)$ and $q_1(t),q_2(t)$ are interior and boundary controls, respectively. 
	\begin{definition}[\textbf{Interior null controllability}]
		Let $T>0$ and let $X$ be a Hilbert space. Then system \eqref{eq:main} is said to be interior null controllable in $X$ at time $T$, if for any given initial data $(u_0,v_0)\in X$ there exists $h_1$ $(\text{or }h_2)\in L^2(0,T;L^2(0,2\pi))$ such that the solution of \eqref{eq:cntrl} with $h_2$ $(\text{or }h_1),q_1, q_2$ identically equal to $0$ satisfies $u(T,\cdot)=0$ and $v(T,\cdot)=0$.
	\end{definition}
	\begin{definition}[\textbf{Boundary null controllability}]
		Let $T>0$ and let $X$ be a Hilbert space. Then system \eqref{eq:main} is said to be boundary null controllable in $X$ at time $T$, if for any given initial data $(u_0,v_0)\in X$ there exists $q_1$ $(\text{or }q_2)\in L^2(0,T)$ such that the solution of \eqref{eq:cntrl} with $h_1,h_2,q_2$ $(\text{or }q_1)$ identically equal to $0$ satisfies $u(T,\cdot)=0$ and $v(T,\cdot)=0$.
	\end{definition}
	
	In this paper, we establish null controllability of the system \eqref{eq:main} with exactly one control, i.e., only one of the control functions will be present in the system \eqref{eq:cntrl} and the rest will be assumed to be identically $0$.

	\subsection{Related literature and Motivation for the problem}
	Controllability of parabolic partial differential equations (PDEs) has gained a lot of interest among researchers throughout the years. Several methods have been introduced so far to study such problems. 
	Let us mention some pioneer works among them. Null controllability of one-dimensional heat equation was first investigated by Fattorini and Russell in \cite{FR71}, where the method of moments was introduced to study such problem. Later on, Lebeau and Robbiano demonstrated local Carleman estimate for the heat equation in higher dimensional setting in \cite{LR95}. Applying global Carleman estimate, Fursikov and Immanuvilov  studied the null controllability of heat equation in one dimension in \cite{F96}. Transmutation method was given by Miller in \cite{LM06} to deal with the same problem of heat equation in one dimension. Also, a constructive method named as Flatness method was proposed by Martin, Rosier, and Rouchon in \cite{LR16}. Recently, Coron and Nguyen developed the backstepping method in \cite{Co17arma} to conclude the null controllability of one-dimensional heat equation. For more results regarding null controllability of parabolic equation, see \cite{AG08} and references therein.
	
	Now we explore the existing literature for the KS equation. Because of the physical importance of stability, it was the first thing to be studied for KS equation in the field of control theory, for example, see \cite{AC00}, \cite{AC000}, \cite{John96}. Let us also mention the recent works  \cite{Kirs18}, \cite{Cer19}, concerning the stability of KS. The authors in \cite{Kirs18} gave general results, explaining when one can conclude about the stability of a nonlinear system from the stability of the corresponding linearized system, and further illustrated in the case of KS equation. 
	The first work regarding controllability of KS equation was done in \cite{EC10}, where the author %gave a necessary and sufficient condition on anti-diffusion parameter $\lambda$ for 
	studied the boundary null controllability of linear KS equation with a single control force acting on first order derivative at left end point, using moment method. %The author of this paper studied stability of the system as well.
	Stability of the system was also studied in this paper. The authors in \cite{EC11} considered  nonlinear KS equation and proved its controllability to trajectory with Dirichlet boundary control acting at left endpoint of zeroth and first order derivative. They first used Carleman estimates to study the null controllability of linearized KS equation and then used local inversion theorem to get the desired result for nonlinear KS equation. The study of  null controllability of linear KS equation was further developed in \cite{EC17}, wherein the authors %again gave a necessary and sufficient condition on $\lambda$
	studied the boundary null controllability of linear KS equation, but now with the control acting only on zeroth order derivative at left end point, using moment method. 
	Next, they considered the Neumann boundary case and proved that the linear KS system is not null controllable with a single control acting on either of the second or third order derivatives but is so with both the boundary controls acting simultaneously on the system. Further, they also proved the null controllability of linear KS system with localized interior control. 
	In \cite{Gao18}, \cite{GCL15}, the Carleman estimates  for linear stochastic KS equation has been explored. The authors in \cite{Tak17} studied the local boundary null controllability of KS equation in one and two dimensional setup. The controllability of state-constrained linear stochastic KS system has been studied in the works \cite{Gao15}, and \cite{Gao20}. Results regarding the global exact controllability to the trajectory of KS has been given in \cite{Gao20(1)}. 
	
	After single parabolic equation, the study of controllability of coupled parabolic system has fascinated a lot of control theorist in the last two decades, the  main reason being its wide range of appearance in different practical situations, such as in the study of physical phenomena, chemical reactions and  in a wide variety of mathematical biology. The coupled nonlinear system \eqref{KS-KdV_heat}, whose linearized version is being considered in  our study, describes the surface waves on multi-layered liquid films. For more details on the controllability of linear coupled parabolic systems, one can have a look into a very nice survey report \cite{Ter11} and references therein. For the existing controllability results of the considered system \eqref{KS-KdV_heat}, let us mention \cite{EC16}, \cite{EC12} and \cite{EC15}. In \cite{EC12}, the authors proved local null controllability of \eqref{KS-KdV_heat} with an extra $v^2$ term in the second equation with Dirichlet boundary conditions using three boundary controls whereas, in \cite{EC15} and \cite{EC16} the authors proved local null controllability and local controllability to the trajectories of system \eqref{KS-KdV_heat} with localized interior control acting on KS-KdV equation and heat equation, respectively. In all of these three works, different Carleman inequalities have been proved, depending on the system, to get the controllability result for corresponding linearized systems, and then the desired result for the nonlinear systems have been proved using local inversion theorems. 
	The authors in \cite{EC19} considered a simplified version of linear stabilized KS system with coupling through zeroth order derivative in KS equation, wherein they gave positive as well as negative results of null controllability depending on the coefficient $\nu$ using method of moments. Using the source term method, this has been extended to the corresponding nonlinear system (i.e., with $uu_x$ term) in \cite{hernandezsantamaria:hal-03090716}, which gives null controllability of the system under the condition of $\sqrt{\nu}$ being quadratic irrational. Let us also mention the most recent works \cite{BV22} and \cite{Vic22} which deals with an insensitizing control problem and controllability of stochastic stabilized KS system, respectively. Lastly, we mention the work \cite{Vic21} which deals with the controllability issues of stabilized KS system in numerical setup by discretizing the time variable.
	
	To the best of the authors' knowledge, the present work is the first one to prove null controllability of the linear stabilized KS system \eqref{eq:main}  using the moment method. This was possible here because of the periodic boundary conditions considered in the system, which gave nice behavior of the eigen-elements.

	\subsection{Notations and functional setting}\label{sect1.4}
	Let us denote the Lebesgue space of  complex valued square integrable periodic functions over $[0,2\pi]$ by $\lt$. Further, we denote the space $L^2(0,2\pi)\times L^2(0,2\pi)$ by $\mathbf{Z}$ with the usual Hermitian product
	\begin{equation*}
		\ip{\begin{pmatrix}
				u_1\\ v_1
		\end{pmatrix}}{\begin{pmatrix}
				u_2\\ v_2
		\end{pmatrix}}_{\mathbf{Z}}=\intg u_1(x) \overline{u_2(x)} dx+ \intg v_1(x) \overline{v_2(x)} dx.
	\end{equation*}
	For $s\geq0$, we define the classical periodic Sobolev space $H^s_{per}(0,2\pi)$ as the subspace of $\lt$, consisting functions of the form 
	\begin{equation*}
		\phi=\sum_{k\in \z}\phi_k e^{-ikx},
	\end{equation*} such that \begin{equation*}
		\sum_{k\in \z}\left(1+|k|^{2}\right)^s |\phi_k|^2 < \infty.
	\end{equation*}
	Note that $\phi_k=\intg \phi(x) e^{ikx} dx, k \in \mathbb{Z}.$
	Thus, $H^s_{per}(0,2\pi)$ is a Hilbert space endowed with the following norm
	\begin{equation*}
		\norm{\phi}_{H^s_{per}\tor}=\left(\sum_{k\in \z} \left(1+|k|^{2}\right)^s |\phi_k|^2\right)^{\frac{1}{2}}.
	\end{equation*}
	Further, for $s\in \nat$, we also write 
	\begin{align*}
		H^s_{per}(0,2\pi)=\left\{f\in H^s(0,2\pi):f^{i}(0)=f^{i}(2\pi), i \in \{0,1,\ldots,s-1\}\right\}.
	\end{align*} %with the same norm as of $H^s(0,2\pi)$.
	
	Let $X$ be any Lebesgue space defined on $(0,2\pi)$. We will often use the shorthand notation $L^2(X)$ and $H_{per}^s$  to denote the space $L^2(0,T;X)$ and $H_{per}^s(0,2\pi)$, respectively. Also, we will use the alphabets $c,C$  to denote positive generic constants.

	Let $U=\begin{pmatrix}
	u \\ v
	\end{pmatrix}$, then the above system can be written in infinite dimensional ODE setup as \begin{align}
		U'(t)=AU(t),
	\end{align} where the operator $A$ can be written in the following form:
	\begin{equation}\label{A}
		A= \begin{pmatrix}
			- \dfrac{d^4}{dx^4}-\dfrac{d^3}{dx^3}- \dfrac{d^2}{dx^2} &  &\dfrac{d}{dx}\\ \\
			\dfrac{d}{dx} & & \dfrac{d^2}{dx^2}-\dfrac{d}{dx} 
		\end{pmatrix} ,
	\end{equation}
	with its domain 
	\begin{align*}
		D(A)=H^4_{{per}}\tor\times {H}^2_{{per}}\tor.
	\end{align*}

	Let $\mathbf{H}=H^2_{per}(0,2\pi)\times H^1_{per}(0,2\pi)$.
	We denote the dual space of $\mathbf{H}$ with respect to the pivot space $\mathbf{Z}$ by $\mathbf{H}^*=(H^2_{per}(0,2\pi))^*\times (H^1_{per}(0,2\pi))^*$, where $(H^k_{per}(0,2\pi))^*$ is the dual space of $H^k_{per}(0,2\pi)$ with respect to the pivot space $L^2(0,2\pi)$, for $k=1,2$.	This $\mathbf{H}^*$ is the state space in our study and we express the duality product between $\mathbf{{H}}^*$ and $\mathbf{H}$ by $\ip{\cdot}{\cdot}_{\mathbf{{H}}^*,\mathbf{H}}$.

	\subsection{Main results}
	
	We now state the main results of this paper, concerning the null controllability of the system \eqref{eq:main} using localized interior and boundary control acting at different positions. 
	
	\noindent
	We first consider the following control system:
	\begin{equation}\label{eq:cntrl1}
		\begin{cases}
			u_t+ u_{xxxx}+u_{xxx}+u_{xx}-v_x= h(x,t) & (t,x)\in (0,T)\times(0, 2\pi),\\
			v_t- v_{xx}+ v_x-u_x=0 & (t,x)\in (0,T)\times(0, 2\pi),\\
			u(0,x)=u_0(x), v(0,x)=v_0(x) & x \in (0, 2\pi).
		\end{cases}
	\end{equation}
	with the same boundary condition as in \eqref{eq:main}, where $h$ denotes the control function. 
	\begin{theorem}\label{thm1}
		For any time $T>0$ and any $
		(u_0, v_0)
		\in (H_{per}^2)^*\times (H_{per}^1)^*$ with $\ip{v_0}{1}_{{(H^1_{per})}^*,H^1_{per}}=0$, there exists a localized (bilinear) interior control $h\in L^2((0,T)\times (0, 2\pi))$ of the form $h(x,t)=1_{\omega}f(x)g(t)$, where $\omega$  is any nonempty open subset of $(0,2\pi)$, such that the solution of \eqref{eq:cntrl1} satisfies $u(T,\cdot)=0$ and $v(T,\cdot)=0$. 
	\end{theorem}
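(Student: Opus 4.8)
The plan is to apply the moment method in three stages: a spectral analysis of the generator, a reduction of null controllability to a moment problem on $(0,T)$, and the resolution of that problem with a biorthogonal family in $L^2(0,T)$.

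\emph{Stage 1: spectral decomposition.} Because the boundary conditions are periodic, $A$ and its $L^2$-adjoint $A^{*}$ leave invariant each two-dimensional space $V_k=\mathrm{span}\{(e^{-ikx},0),(0,e^{-ikx})\}$, $k\in\z$; on $V_k$, $A$ is represented by
\[
A_k=\begin{pmatrix}-k^4-ik^3+k^2 & -ik\\ -ik & -k^2+ik\end{pmatrix},
\]
so $A^{*}$ is represented by $\overline{A_k}$. I would diagonalize each $\overline{A_k}$, obtaining eigenvalues $\lambda_k^{1},\lambda_k^{2}$ and eigenfunctions $\Phi_k^{j}=(\phi_{k,1}^{j},\phi_{k,2}^{j})$ (each component a scalar multiple of $e^{-ikx}$), and record the facts needed later: (i) the asymptotics $\lambda_k^{1}\sim-k^4$ (a Kuramoto--Sivashinsky branch) and $\lambda_k^{2}\sim-k^{2}$ (a heat branch), both lying, for $|k|$ large, in a sector of the open left half-plane, together with $\sum 1/|\lambda_k^{j}|<\infty$ (dominated by $\sum 1/k^2$ from the heat branch); (ii) for $k=0$ one gets the double eigenvalue $0$ with eigenfunctions $(1,0)$ and $(0,1)$, the second of which has vanishing first component --- this is exactly the obstruction produced by the conservation law $\frac{d}{dt}\int_0^{2\pi}v\,dx=0$, which forces the hypothesis $\langle v_0,1\rangle=0$; (iii) for all other $(k,j)$ the eigenvalues are pairwise distinct, $\{\Phi_k^{j}\}$ is complete in $\mathbf{Z}$, and $\phi_{k,1}^{j}$ is a nonzero scalar multiple of $e^{-ikx}$ whose size (with $\|\Phi_k^{j}\|_{\mathbf{Z}}=1$) is of order $1$ on the KS branch and of order $|k|^{-3}$ on the heat branch, where the eigenvector concentrates in the heat component. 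A handful of small modes would be treated by explicit computation: checking the absence of Jordan blocks and, crucially, that no branch collision $\lambda_k^{1}=\lambda_{k'}^{2}$ occurs; a Jordan block at finitely many modes would merely add a $t\mapsto t\,e^{\lambda(T-t)}$ term below.

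\emph{Stage 2: reduction to a moment problem.} Writing the control as $g\mapsto (1_{\omega}f g,0)$ and using the standard duality with the adjoint (backward) system, $u(T,\cdot)=v(T,\cdot)=0$ is equivalent to
\[
m_k^{j}\int_0^T g(t)\,e^{\lambda_k^{j}(T-t)}\,dt=-e^{\lambda_k^{j}T}\,\langle (u_0,v_0),\Phi_k^{j}\rangle_{\mathbf{H}^{*},\mathbf{H}}\qquad\text{for all }(k,j),
\]
where $m_k^{j}=\int_{\omega}f(x)\,\overline{\phi_{k,1}^{j}(x)}\,dx$. For $k=0$ and the eigenfunction $(0,1)$ the left side vanishes and the right side is $-\langle v_0,1\rangle=0$ by hypothesis, so that equation is automatic; for every other $(k,j)$ I would need $m_k^{j}\neq0$. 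Since $\phi_{k,1}^{j}$ is a scalar multiple of $e^{-ikx}$, this amounts to requiring the $\omega$-restricted Fourier coefficients $\int_{\omega}f(x)e^{ikx}\,dx$ to be nonzero for all $k$, so I would fix, once and for all (independently of the data), a function $f\in C_c^{\infty}(\omega)$ whose restricted Fourier coefficients are all nonzero and bounded below by a negative power of $|k|$; such $f$ exist for any nonempty open $\omega$. Combining this lower bound with the polynomial bounds from Stage~1 on $\phi_{k,1}^{j}$, on $\|\Phi_k^{j}\|_{\mathbf{H}}$, and hence on $\langle (u_0,v_0),\Phi_k^{j}\rangle$, the numbers $c_k^{j}:=-e^{\lambda_k^{j}T}\langle (u_0,v_0),\Phi_k^{j}\rangle/m_k^{j}$ satisfy $|c_k^{j}|\le C\,|k|^{N}e^{\mathrm{Re}\,\lambda_k^{j}\,T}$ for some $N$ independent of the data.

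\emph{Stage 3: biorthogonal family and conclusion.} From the properties in Stage~1 --- left-half-plane eigenvalues with growth $|k|^4$ and $|k|^{2}$, pairwise distinctness, and $\sum 1/|\lambda_k^{j}|<\infty$ --- I would invoke the classical Fattorini--Russell-type construction to obtain a family $\{\psi_k^{j}\}\subset L^{2}(0,T)$ biorthogonal to $\{t\mapsto e^{\lambda_k^{j}(T-t)}\}$, with $\|\psi_k^{j}\|_{L^{2}(0,T)}\le C_{\varepsilon}\,e^{\varepsilon|\lambda_k^{j}|}$ for every $\varepsilon>0$. Choosing $\varepsilon$ small relative to $T$, the series $g:=\sum_{k,j}c_k^{j}\psi_k^{j}$ converges in $L^{2}(0,T)$, since $|c_k^{j}|\,\|\psi_k^{j}\|\le C\,|k|^{N}e^{\mathrm{Re}\,\lambda_k^{j}T}e^{\varepsilon|\lambda_k^{j}|}$ with $\mathrm{Re}\,\lambda_k^{j}\sim-|\lambda_k^{j}|$, and even the slowest (heat-branch) terms decay like $e^{-c k^{2}T}$. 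By construction $g$ solves every moment equation, so $h(x,t)=1_{\omega}(x)f(x)g(t)$ is the required control and the corresponding solution of \eqref{eq:cntrl1} satisfies $u(T,\cdot)=v(T,\cdot)=0$.

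\emph{Main obstacle.} The crux is Stage~1, and within it the requirement that the two spectral branches never collide, i.e. $\lambda_k^{1}\neq\lambda_{k'}^{2}$ for all $k,k'$ (and, quantitatively, that merging them yields a sequence still admissible for the biorthogonal construction): an exact coincidence would make two of the moment equations incompatible and would genuinely break null controllability with a single control, so it must be ruled out --- automatically for large indices by the differing growth rates $k^4$ versus $k^{2}$, but by explicit computation for the finitely many small modes (which must also be checked not to produce Jordan blocks, and whose real parts need not be negative). A secondary but essential point is the bookkeeping: tracking the exact polynomial-in-$|k|$ size of the first components $\phi_{k,1}^{j}$ of the heat-branch eigenfunctions and of the restricted Fourier coefficients of $f$, so that the $c_k^{j}$ retain only polynomial growth and the series defining $g$ converges.
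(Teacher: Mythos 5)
Your overall strategy is the same as the paper's: reduce to a moment problem via the adjoint system, control the first (KS) component with a bilinear control $h=1_\omega f\,g$, and solve the moment problem with a biorthogonal family to $\{e^{\lambda_k^\pm t}\}$. The spectral asymptotics, the role of the $k=0$ mode (forcing $\langle v_0,1\rangle=0$), and the bookkeeping of $|\theta_k^\pm|\sim|k|^{\pm3}$ are all correctly identified and match the paper.

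There is, however, a genuine gap in Stage 2: you propose to fix $f\in C_c^\infty(\omega)$ with Fourier coefficients ``all nonzero and bounded below by a negative power of $|k|$,'' claiming such $f$ exists. No such $f$ exists: for $f\in C_c^\infty$ the coefficients $f_k=\int f(x)e^{-ikx}\,dx$ decay faster than any negative power of $|k|$, so the lower bound $|f_k|\geq C|k|^{-N}$ is impossible. This is not just a cosmetic slip, because your subsequent estimate $|c_k^j|\le C|k|^N e^{\mathrm{Re}\lambda_k^j T}$ depends precisely on that false lower bound. The paper circumvents this by \emph{dropping smoothness}: it takes $f=\chi_{[\alpha,\alpha+\rho\pi]}$ for a quadratic irrational $\rho$, so that $f\in L^2(0,2\pi)$ with support in $\omega$, and proves $|f_k|\geq \widetilde C/|k|^2$ via the Diophantine inequality $|\rho-p/q|\geq C/q^2$ together with $|1-e^{-ik\rho\pi}|=2|\sin(k\rho\pi/2)|$. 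If you replace ``$C_c^\infty$'' by ``$L^2$'' and adopt this (or any comparable) lower-bound mechanism, the rest of your convergence argument goes through.

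A secondary, more minor difference: you invoke a generic Fattorini--Russell-type bound $\|\psi_k^j\|_{L^2(0,T)}\le C_\varepsilon e^{\varepsilon|\lambda_k^j|}$. The paper instead builds the biorthogonal family from scratch (Paley--Wiener plus a Beurling--Malliavin multiplier following Rosier), obtaining sharper estimates with an explicit $e^{-\frac{T}{2}k^2}$ (resp.\ $e^{-\frac{T}{2}k^4}$) decay factor. Your version is acceptable in principle --- the paper itself remarks that the existence can be obtained from the literature on complex exponentials --- but you should at least verify that the cited classical result does cover complex eigenvalue sequences of the two-branch type $\{-k^2+ik+\cdots\}\cup\{-k^4-ik^3+\cdots\}$ and yields the $C_\varepsilon$-estimate you use; that is not entirely free for eigenvalues with unbounded imaginary parts.
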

	\noindent
	Next we consider the same system as in \eqref{eq:cntrl1} but now with the control $h$ acting on the heat equation. That is, we consider the system
	\begin{equation}\label{eq:cntrl1a}
		\begin{cases}
			u_t+ u_{xxxx}+u_{xxx}+u_{xx}-v_x= 0 & (t,x)\in (0,T)\times(0, 2\pi),\\
			v_t- v_{xx}+ v_x-u_x=h(x,t) & (t,x)\in (0,T)\times(0, 2\pi),\\
			u(0,x)=u_0(x), v(0,x)=v_0(x) & x \in (0, 2\pi).
		\end{cases}
	\end{equation}
	with the same boundary condition as in \eqref{eq:main}.
	\begin{theorem}\label{thm1a}
		For any time $T>0$ and any  $(u_0, v_0) \in (H_{per}^2)^*\times (H_{per}^1)^*$ with $\ip{u_0}{1}_{{(H^2_{per})}^*,H^2_{per}}=0$, there exists a localized (bilinear) interior control $h\in L^2((0,T)\times\tor)$ of the form $h(x,t)=1_{\omega}f(x)g(t)$, where $\omega$  is any nonempty open subset of $(0,2\pi)$, such that the solution of \eqref{eq:cntrl1} satisfies $u(T,\cdot)=0$ and $v(T,\cdot)=0$. %(i.e., the system is null controllable).
	\end{theorem}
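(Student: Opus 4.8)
The argument is the moment method, run exactly as for Theorem~\ref{thm1} but with the roles of the two components exchanged; I describe the plan and point out where the present case differs. \emph{Step 1: spectral preparation.} By periodicity, the operator $A$ of \eqref{A} is block‑diagonalised by the Fourier modes $e^{-ikx}$: on the span of $e^{-ikx}$ it acts as the matrix
\[
A_k=\begin{pmatrix} -k^4-ik^3+k^2 & -ik\\ -ik & -k^2+ik\end{pmatrix},
\]
with $\operatorname{tr}A_k=-k^4-ik(k^2-1)$ and $\det A_k=k^2(k^4+ik+1)$. I would record, for the two eigenvalues $\lambda_{1,k},\lambda_{2,k}$ of $A_k$: that they are distinct whenever $k\neq0$ (the discriminant $(\operatorname{tr}A_k)^2-4\det A_k$ vanishes only at $k=0$, where $A_k=0$); that $0$ is an eigenvalue only for $k=0$; that $\operatorname{Re}\lambda_{j,k}<0$ for $k\neq0$; and, as $|k|\to\infty$, the asymptotics $\lambda_{1,k}=-k^4-ik^3+O(k^2)$ and $\lambda_{2,k}=-k^2+ik+O(1)$ (from $\lambda_{1,k}+\lambda_{2,k}=\operatorname{tr}A_k$, $\lambda_{1,k}\lambda_{2,k}=\det A_k$, with $\lambda_{1,k}$ the large root). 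These imply that the spectrum lies in a sector about $\rea_-$, so $-\operatorname{Re}\lambda_{j,k}\geq c\,|\lambda_{j,k}|$; that the combined family $\{\lambda_{1,k}\}_k\cup\{\lambda_{2,k}\}_k$ is uniformly separated (the gaps in fact tending to $\infty$, the only accumulation of the moduli being $|\lambda_{2,k}|\sim k^2$, as for the heat equation); and that $\sum_{j,k}|\lambda_{j,k}|^{-1}<\infty$; the finitely many small $k$ are checked directly. Writing $\Psi_{j,k}=\psi_{j,k}e^{-ikx}$ and $\Phi_{j,k}=\phi_{j,k}e^{-ikx}$ for biorthogonal eigenbases of $A$ and of $A^{*}$ (with $|\psi_{j,k}|,|\phi_{j,k}|$ bounded above and below uniformly, since the two eigendirections of $A_k$ are asymptotically orthogonal), the decisive new estimate — here is where the proof diverges from that of Theorem~\ref{thm1} — is a lower bound for the \emph{second} component of $\phi_{j,k}$: the eigenvector relation for $A_k^{*}$ gives $|(\phi_{2,k})^{(2)}|\gtrsim1$ and $|(\phi_{1,k})^{(2)}|\gtrsim|k|^{-3}$, so $(\phi_{j,k})^{(2)}\neq0$ for all $j$ and all $k\in\z$. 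Finally, $\{e^{-ikx}\}_{k\in\z}$ being complete in $\lt$ and $\{\psi_{1,k},\psi_{2,k}\}$ spanning $\cplx^{2}$ for each $k$, the family $\{\Psi_{j,k}\}$ is complete in $\mathbf{Z}$, hence in $\mathbf{H}^{*}$, and $\{\Phi_{j,k}\}$ is complete in $\mathbf{H}$.

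\emph{Step 2: reduction to a moment problem.} Take the control in the form $h(x,t)=1_{\omega}(x)f(x)g(t)$ and set $f_k:=\intg 1_{\omega}(x)f(x)e^{ikx}\,dx$. Expanding the (mild) solution of \eqref{eq:cntrl1a} along $\{\Phi_{j,k}\}$, using $A^{*}\Phi_{j,k}=\overline{\lambda_{j,k}}\,\Phi_{j,k}$ and that the control acts only on the second component, one finds that $\ip{U(t)}{\Phi_{j,k}}_{\mathbf{H}^{*},\mathbf{H}}$ obeys a scalar ODE with growth rate $\lambda_{j,k}$ (or $\overline{\lambda_{j,k}}$; this is immaterial, since $\{\overline{\lambda_{j,k}}\}=\{\lambda_{j,k}\}$) and source $b_{j,k}f_k\,g(t)$, where $b_{j,k}$ is a constant with $|b_{j,k}|=|(\phi_{j,k})^{(2)}|$. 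Hence $U(T)=0$ in $\mathbf{H}^{*}$ is equivalent to
\[
b_{j,k}\,f_k\int_0^{T}e^{\lambda_{j,k}(T-s)}g(s)\,ds=-e^{\lambda_{j,k}T}\,\ip{(u_0,v_0)}{\Phi_{j,k}}_{\mathbf{H}^{*},\mathbf{H}}\qquad(k\in\z,\ j\in\{1,2\}),
\]
together with, at $k=0$, the two conditions $\ip{u_0}{1}_{{(H^2_{per})}^{*},H^2_{per}}=0$ — which is exactly the standing hypothesis, the zeroth Fourier mode of $u$ being the lone direction a control on $v$ leaves unchanged — and $f_0\int_0^{T}g(s)\,ds=-\ip{v_0}{1}_{{(H^1_{per})}^{*},H^1_{per}}$, attached to the simple eigenvalue $0$. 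With $\tl g(s):=g(T-s)$, this is a moment problem for $\tl g\in L^{2}(0,T)$ against $\{1\}\cup\{e^{\lambda_{j,k}s}:k\in\z,\ j\in\{1,2\}\}$.

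\emph{Step 3: solving the moment problem.} Fix $f$ — for instance a Gevrey‑$2$ bump supported in $\omega$ — so that $f_k\neq0$ for all $k$ and $|f_k|\geq c\,e^{-\eps_0\sqrt{|k|}}$ for some $c,\eps_0>0$; such $f$ exist, and this is standard for bilinear (lumped) interior controls. By the separation and summability of Step~1 and the sectoriality of the spectrum, the classical moment‑method lemma provides a family $\{\Theta_0\}\cup\{\Theta_{j,k}\}\subset L^{2}(0,T)$, biorthogonal to $\{1\}\cup\{e^{\lambda_{j,k}s}\}$ (i.e.\ $\int_0^{T}e^{\lambda_{i,l}s}\Theta_{j,k}(s)\,ds=\delta_{(i,l),(j,k)}$, and likewise against the constant), with $\|\Theta_{j,k}\|_{L^{2}(0,T)}\leq C(T)\,e^{C(T)\sqrt{|\lambda_{j,k}|}}$. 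Put
\[
\tl g:=-\frac{\ip{v_0}{1}_{{(H^1_{per})}^{*},H^1_{per}}}{f_0}\,\Theta_0\;-\;\sum_{k\in\z}\sum_{j=1}^{2}\frac{e^{\lambda_{j,k}T}\,\ip{(u_0,v_0)}{\Phi_{j,k}}_{\mathbf{H}^{*},\mathbf{H}}}{b_{j,k}\,f_k}\,\Theta_{j,k}.
\]
Each summand has $L^{2}(0,T)$‑norm at most $C\,(1+k^{2})\,e^{-cT|\lambda_{j,k}|}\,|b_{j,k}|^{-1}\,|f_k|^{-1}\,\|(u_0,v_0)\|_{\mathbf{H}^{*}}\,\|\Theta_{j,k}\|_{L^{2}(0,T)}$, using $\|\Phi_{j,k}\|_{\mathbf{H}}\lesssim1+k^{2}$ and $|e^{\lambda_{j,k}T}|\leq e^{-cT|\lambda_{j,k}|}$; since $|\lambda_{1,k}|\sim k^{4}$ and $|\lambda_{2,k}|\sim k^{2}$, the decay $e^{-cT|\lambda_{j,k}|}$ swallows the polynomial weight, the factor $|b_{j,k}|^{-1}\lesssim|k|^{3}$, the factor $|f_k|^{-1}\leq c^{-1}e^{\eps_0\sqrt{|k|}}$, and the biorthogonal bound $e^{C(T)\sqrt{|\lambda_{j,k}|}}$, so the series converges in $L^{2}(0,T)$. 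Then $g(t):=\tl g(T-t)$ yields a control $h=1_{\omega}fg\in L^{2}((0,T)\times\tor)$ satisfying all the moment identities above; since $\{\Phi_{j,k}\}$ is complete in $\mathbf{H}$, this forces $u(T,\cdot)=0$ and $v(T,\cdot)=0$ in $\mathbf{H}^{*}$, which is the assertion.

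The crux of the whole argument is Step~1: the eigenvalue asymptotics together with the sectoriality and separation of the two branches, the uniform lower bounds on $|(\phi_{j,k})^{(2)}|$, and the identification of the zeroth mode of $u$ as the single uncontrollable direction — everything downstream, the admissible choices of $f$ and the convergence of the series for $\tl g$, rests on these estimates.
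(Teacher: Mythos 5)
Your overall strategy matches the paper's: reduce to a moment problem along the eigenbasis of $A^{*}$, observe that the price of moving the control to the heat component is a factor $\overline{\theta_k^\pm}$ (your $b_{j,k}$) in front of $f_k$ in the moment equations, check that this factor never vanishes, and absorb its growth into the Gaussian decay coming from $e^{\lambda_{j,k}T}$. Your identification of $\ip{u_0}{1}=0$ as the one constraint (the zero mode of $u$ being invisible to a control on $v$) is also exactly the paper's Lemma corresponding to \eqref{intmoment2}.

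There is, however, a genuine gap in Step~3: you choose $f$ as ``a Gevrey‑$2$ bump supported in $\omega$'' and assert $|f_k|\geq c\,e^{-\eps_0\sqrt{|k|}}$, calling this standard. It is not. Gevrey‑$2$ regularity with compact support gives the \emph{upper} bound $|f_k|\leq Ce^{-c\sqrt{|k|}}$, but nothing prevents the Fourier transform — an entire function — from vanishing at integers, and even when $f_k\neq0$ for all $k$, a matching lower bound of this strength is a delicate quantitative non‑degeneracy statement that does not follow from any regularity hypothesis. This is precisely why the paper does something more concrete: it takes $f=\chi_{[\alpha,\alpha+\rho\pi]}$ with $\rho\in(0,1)$ a quadratic irrational, computes $|f_k|=2|\sin(k\rho\pi/2)|/|k|$, and uses the Diophantine inequality $|\rho-p/q|\geq C/q^2$ (quadratic irrationals are badly approximable) to get the lower bound $|f_k|\geq\widetilde C/|k|^2$. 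That lemma is a real, needed step — and is even slightly stronger (polynomial rather than sub‑exponential) than what you posit. Since the same $f$ is used in the proof of \Cref{thm1}, this gap is not specific to the present theorem, but it is not optional. A secondary, smaller point: you invoke ``the classical moment‑method lemma'' with bound $e^{C(T)\sqrt{|\lambda|}}$; the paper constructs the biorthogonal family from scratch in \Cref{sect6} (following Rosier's method) precisely because the spectrum splits into two parabolic branches of different orders ($|\lambda_k^+|\sim k^2$, $|\lambda_k^-|\sim k^4$) and the standing hypotheses of off‑the‑shelf results are not obviously met; if you cite a ready‑made theorem you should at least verify its separation/counting hypotheses for this two‑branch family.
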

	\noindent
	Next we consider the following boundary control system: 
	\begin{equation}\label{eq:cntrl2}
		\begin{cases}
			u_t+ u_{xxxx}+u_{xxx}+u_{xx}-v_x=0, & (t,x)\in (0,T)\times(0, 2\pi),\\
			v_t- v_{xx}+ v_x-u_x=0,  & (t,x)\in (0,T)\times(0, 2\pi),\\
			u(t,0)=u(t,2\pi)+q(t), & t \in (0, T) \\
			u(0,x)=u_0(x), v(0,x)=v_0(x), & x \in (0, 2\pi).
		\end{cases}
	\end{equation}
	with rest of the boundary condition as in \eqref{eq:main}, where q is control.
	\begin{theorem}\label{thm2}
		Let $ (u_0, v_0)\in (H_{per}^2)^*\times (H_{per}^1)^*$ with $\ip{u_0}{1}_{{(H^2_{per})}^*,H^2_{per}}=0$. Then for any $T>0$, there exists $q\in L^2(0,T)$ such that the solution of \eqref{eq:cntrl2} satisfies $u(T,\cdot)=0$ and  $v(T,\cdot)=0$.% (i.e., the system is null controllable). 
	\end{theorem}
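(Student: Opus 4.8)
The plan is to follow the classical moment method as applied to the analogous interior-control situation, so I will mimic the structure already developed in the proofs of Theorems~\ref{thm1} and~\ref{thm1a}. First I would spectrally decompose the problem. Because the boundary conditions in \eqref{eq:main} are periodic, the operator $A$ in \eqref{A} diagonalizes (or is at least block-diagonal) in the Fourier basis $\{e^{-ikx}\}_{k\in\z}$: for each $k$ the invariant subspace is two-dimensional, spanned by $(e^{-ikx},0)$ and $(0,e^{-ikx})$, and $A$ acts there as the $2\times 2$ matrix
\begin{equation*}
	A_k=\begin{pmatrix} -k^4+ik^3+k^2 & -ik \\ -ik & -k^2+ik \end{pmatrix}.
\end{equation*}
I would compute its eigenvalues $\lambda_k^{\pm}$ and eigenvectors, noting that for large $|k|$ one eigenvalue behaves like $-k^4$ (the KS-KdV branch) and the other like $-k^2$ (the heat branch), so the real parts diverge to $-\infty$ and, crucially, the family $\{\lambda_k^{\pm}\}$ satisfies a suitable gap/summability condition $\sum 1/\mathrm{Re}(-\lambda)<\infty$ needed for the existence of a biorthogonal family. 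The hypothesis $\ip{u_0}{1}_{(H^2_{per})^*,H^2_{per}}=0$ is exactly the statement that the $k=0$ component of $u_0$ vanishes; the $k=0$ block is where $A_0=\begin{pmatrix}0&0\\0&0\end{pmatrix}$ and the constant mode of $u$ is uncontrollable by a boundary term that only enters through the difference $u(t,0)-u(t,2\pi)$, so that compatibility condition is necessary and I would record it as such.

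Next I would set up the moment problem. Testing the control system \eqref{eq:cntrl2} against the eigenfunctions $\Phi_k^{\pm}$ of the adjoint operator $A^*$ and integrating by parts in $x$, the boundary term $q(t)$ produces a source of the form $q(t)\,\psi_k^{\pm}$ where $\psi_k^{\pm}$ is an explicit combination of the boundary values (and derivatives) of the adjoint eigenfunctions at $x=0$; since these are trigonometric, $\psi_k^{\pm}$ is just a number depending on $k$ and the branch. The requirement $u(T,\cdot)=v(T,\cdot)=0$ then becomes the moment equations
\begin{equation*}
	\int_0^T q(t)\, e^{\lambda_k^{\pm}(T-t)}\,\psi_k^{\pm}\,dt = -e^{\lambda_k^{\pm}T}\,\alpha_k^{\pm}, \qquad k\in\z,\ \pm,
\end{equation*}
where $\alpha_k^{\pm}$ are the coordinates of the initial data $(u_0,v_0)$ in the adjoint eigenbasis. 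I would then verify the key non-degeneracy fact that $\psi_k^{\pm}\neq 0$ for all relevant $k$ (this is where the zeroth-order boundary coupling matters — an analogous statement fails in the Neumann second/third-order cases discussed in \cite{EC17}), and obtain a quantitative lower bound $|\psi_k^{\pm}|\geq c\,|k|^{-m}$ for some fixed $m$. Solving the moment problem is then done by taking $q(t)=\sum_{k,\pm} c_k^{\pm}\,\theta_k^{\pm}(T-t)$ where $\{\theta_k^{\pm}\}$ is a biorthogonal family to $\{e^{\lambda_k^{\pm}t}\}$ in $L^2(0,T)$, whose existence and norm bounds $\|\theta_k^{\pm}\|_{L^2(0,T)}\leq C e^{\epsilon\,\mathrm{Re}(-\lambda_k^{\pm})}$ (for any $\epsilon>0$, hence for all $T>0$) I would invoke from the standard Fattorini–Russell-type construction already used earlier in the paper.

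The main obstacle I anticipate is twofold and both pieces are spectral. The first is the possibility of \emph{eigenvalue collisions or near-collisions}: for small or moderate $|k|$ the two eigenvalues $\lambda_k^{\pm}$ of $A_k$ could coincide (a genuine algebraic double eigenvalue, forcing a Jordan block and a modified moment equation with a $t\,e^{\lambda t}$ term) or two eigenvalues from different $k$'s could be extremely close, threatening the spectral gap. I would handle this by an explicit case analysis for the finitely many low frequencies — showing collisions, if any, occur only at isolated $k$ and treating those blocks by hand — and by an asymptotic argument for large $|k|$ showing $\lambda_k^+\sim -k^4$ and $\lambda_k^-\sim -k^2$ are well separated from each other and across $k$, so the biorthogonal family exists with the required bounds. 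The second obstacle is confirming $\psi_k^{\pm}\neq 0$ uniformly and with a polynomial lower bound; this reduces to checking that the relevant component of the adjoint eigenvector does not vanish, which I would do by an explicit computation of the eigenvectors of $A_k^*$ together with the asymptotics as $|k|\to\infty$. Once these two spectral facts are in hand, convergence of the series defining $q$ in $L^2(0,T)$ follows from the exponential decay of $e^{\lambda_k^{\pm}T}$ against the at-most-exponential growth of $\|\theta_k^{\pm}\|$ (with $\epsilon<T$) and the polynomial factor $|\psi_k^{\pm}|^{-1}$, and the regularity of the resulting solution in $(H^2_{per})^*\times(H^1_{per})^*$ is checked termwise. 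I expect the write-up to reuse verbatim large portions of the estimates established for Theorems~\ref{thm1}–\ref{thm1a}, with the only new ingredient being the boundary-term computation of $\psi_k^{\pm}$ and its lower bound.
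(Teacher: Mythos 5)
Your proposal follows the same moment-method strategy the paper uses: test \eqref{eq:cntrl2} against the adjoint eigenfunctions $\Phi_k^\pm$ to reduce null controllability to the moment problem \eqref{bdrymom1}, verify the non-degeneracy of the boundary coefficient (the paper's $ik^3+k^2-ik+\overline{\theta_k^\pm}$, your $\psi_k^\pm$), and solve it by expanding $q$ in the biorthogonal family $\{\Theta_k^\pm\}$ of \Cref{prop1}, then checking $L^2$ convergence from the decay of $e^{\lambda_k^\pm T}$ against the biorthogonal norm bounds. The structure, the necessary compatibility condition on the $k=0$ mode, and the handling of the two parabolic branches all match the paper's Lemma \ref{lem5.1} and Section 5.2.

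Two small points of divergence worth flagging. First, for the non-degeneracy $\psi_k^\pm\neq 0$ the paper relies on numerical plots (\Cref{fig4}, \Cref{fig5}) supplemented by asymptotics, whereas you propose an analytic low-frequency case analysis plus asymptotic lower bound; note that the leading $ik^3$ and $k^2$ terms actually cancel against $\overline{\theta_k^+}$ so $\psi_k^+$ is only $O(|k|)$ rather than $O(|k|^3)$, but it still grows, so your conservative $|\psi_k^\pm|\geq c|k|^{-m}$ is more than enough. Second, you invoke a generic Fattorini--Russell bound $\|\Theta_k^\pm\|\leq Ce^{\epsilon\,\mathrm{Re}(-\lambda_k^\pm)}$, whereas the paper builds a biorthogonal family via a canonical product and Beurling--Malliavin multipliers (following \cite{LR14}) and obtains the sharper, already-decaying bounds of \Cref{prop1}; either estimate suffices for the final convergence, so this is a difference in technique rather than in outcome.
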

	\noindent
	At last, we consider the boundary control system similar to \eqref{eq:cntrl2}, but now with the control acting at the periodic boundary of heat component, i.e.,
	\begin{equation}\label{eq:cntrl2a}
		\begin{cases}
			u_t+ u_{xxxx}+u_{xxx}+u_{xx}-v_x=0, & (t,x)\in (0,T)\times(0, 2\pi),\\
			v_t- v_{xx}+ v_x-u_x=0, & (t,x)\in (0,T)\times(0, 2\pi),\\
			v(t,0)=v(t,2\pi)+q(t), &t\in(0,T),\\
			u(0,x)=u_0(x), v(0,x)=v_0(x), & x \in (0, 2\pi),
		\end{cases}
	\end{equation}
	with rest of the boundary condition as in \eqref{eq:main}, where q is control.
	\begin{theorem}\label{thm2a}
		Let $(u_0, v_0) \in (H_{per}^2)^*\times (H_{per}^1)^*$ with $\ip{u_0}{1}_{{(H^2_{per})}^*,H^2_{per}}=0$ and $\ip{v_0}{1}_{{(H^1_{per})}^*,H^1_{per}}=0$. Then for any $T>0$, there exists $q\in L^2(0,T)$ such that the solution of \eqref{eq:cntrl2a} satisfies $u(T,\cdot)=0$ and  $v(T,\cdot)=0$.% (i.e., the system is null controllable).
	\end{theorem}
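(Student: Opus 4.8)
The plan is to run the moment method, exactly along the lines already used for Theorems~\ref{thm1} and \ref{thm2}, the only genuinely new ingredient being the boundary functional attached to the control $q$ acting on the periodicity defect of $v$. First I would diagonalise the operator $A$ and its formal adjoint $A^*$ by Fourier series: since every boundary condition in \eqref{eq:cntrl2a} except $v(t,0)=v(t,2\pi)$ is periodic, $A$ leaves each mode $e^{-ikx}$ invariant and acts on it as the $2\times 2$ matrix
\[
A_k=\begin{pmatrix} -k^4-ik^3+k^2 & -ik\\ -ik & -k^2+ik\end{pmatrix},\qquad k\in\z,
\]
while $A^*$ acts as $\overline{A_k}=A_{-k}$. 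For $k\neq0$ the block $A_k$ has two simple eigenvalues with asymptotics $\lambda_k^{(1)}=-k^4-ik^3+O(k^2)$ (the fourth-order, KS-KdV branch) and $\lambda_k^{(2)}=-k^2+O(k)$ (the second-order, heat branch); at $k=0$ one has the eigenvalue $0$ with the two-dimensional eigenspace of constant vectors, and the hypotheses $\ip{u_0}{1}=\ip{v_0}{1}=0$ guarantee that the component of $(u_0,v_0)$ on this eigenspace can be steered to zero by $q$. The two spectral facts the method requires are then a uniform gap $\inf_{n\neq m}|\lambda_n-\lambda_m|>0$ and the summability $\sum_n|\lambda_n|^{-1}<\infty$, both of which follow from the asymptotics together with a direct check of the finitely many small-$|k|$ pairs.

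Next comes the moment reduction. Writing the transposition solution of \eqref{eq:cntrl2a} and testing it against the adjoint eigenfunctions $\Psi_n(x)=\xi_n e^{-ik_nx}$, where $\xi_n=((\xi_n)_1,(\xi_n)_2)\in\cplx^2$ is an eigenvector of $\overline{A_{k_n}}$, an integration by parts shows that $u(T,\cdot)=v(T,\cdot)=0$ is equivalent to the moment problem
\[
\int_0^T \tilde q(s)\,e^{\lambda_n s}\,ds=d_n,\qquad d_n=-\frac{e^{T\lambda_n}}{\overline{B_n}}\,\ip{(u_0,v_0)}{\Psi_n}_{\mathbf H^*,\mathbf H},\quad n\in\n,
\]
where $\tilde q(s)=q(T-s)$ and, with $\Psi_n=(\rho_n,\sigma_n)$, the boundary functional is $B_n=\rho_n(0)-\sigma_{n,x}(0)-\sigma_n(0)$ (the jump $v(t,0)-v(t,2\pi)=q(t)$ produces boundary terms from the $-v_{xx}$ and $v_x$ terms of the heat equation and, through the coupling $v_x$, also from the KS-KdV equation, which is why $\rho_n(0)$ appears). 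I would then invoke the classical construction, as in \cite{FR71,EC10,EC17,EC19}: from the uniform gap, the summability above and the fact that the eigenvalues lie in a parabolic region asymptotic to the negative real axis, there is a family $\{q_n\}\subset L^2(0,T)$ biorthogonal to $\{e^{\lambda_n s}\}$ with $\|q_n\|_{L^2(0,T)}\le C_\eps e^{\eps|\mathrm{Re}\,\lambda_n|}$ for every $\eps>0$; then set $\tilde q=\sum_n d_n q_n$ and $q(t)=\tilde q(T-t)$.

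It remains to see that this series converges in $L^2(0,T)$, and this rests on a lower bound for $|B_n|$. From $B_n=(\xi_n)_1+(ik_n-1)(\xi_n)_2$ one reads off that on the heat branch, where $\xi_n$ is concentrated in its second coordinate, $|B_n|\sim|k_n|$, whereas on the KS-KdV branch, where $\xi_n$ is concentrated in its first coordinate with $(\xi_n)_2=O(|k_n|^{-3})$, the term $\rho_n(0)=(\xi_n)_1$ dominates and $|B_n|\to1$; the finitely many low modes are checked directly, so that $|B_n|\ge c>0$ for all $n$. Normalising $\xi_n$ in $\cplx^2$ gives $|\ip{(u_0,v_0)}{\Psi_n}_{\mathbf H^*,\mathbf H}|\le C|k_n|^2\|(u_0,v_0)\|_{\mathbf H^*}$, and since $|\mathrm{Re}\,\lambda_n|$ grows like $k_n^2$ or $k_n^4$ one obtains
\[
\sum_n|d_n|\,\|q_n\|_{L^2(0,T)}\le C_\eps\,\|(u_0,v_0)\|_{\mathbf H^*}\sum_{k}|k|^{C}e^{-(T-\eps)|\mathrm{Re}\,\lambda_k^{(j)}|}<\infty\quad\text{for }\eps<T.
\]
Thus $q\in L^2(0,T)$; it solves every moment equation, and because the adjoint eigenfunctions $\{\Psi_n\}$ together with the $k=0$ constants form a complete system in $\mathbf H$ (the Riesz basis property of the Fourier system, using that for $k\neq0$ the two eigenvectors of $\overline{A_k}$ span $\cplx^2$), the associated transposition solution satisfies $u(T,\cdot)=0$ and $v(T,\cdot)=0$.

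The hard part will be the spectral bookkeeping behind the uniform gap: one must exclude resonances within each branch — the gaps grow like $|k|^3$, resp.\ $|k|$, including between $\lambda_k^{(j)}$ and $\lambda_{-k}^{(j)}$, which share a real part — and between the two branches, where a collision of real parts would force $|k'|\sim k^2$ and hence make the imaginary parts differ by $\sim k^3$; turning this into a quantitative lower bound requires explicit control of the remainders in the eigenvalue asymptotics of the blocks $A_k$, plus a finite check at small $|k|$. Closely tied to it is the verification that no adjoint eigenfunction is invisible to the control, i.e.\ that $B_n\neq0$ for every $n$. Once these two spectral statements are in hand the biorthogonal family and the convergence estimate are routine, and, as emphasised in the introduction, it is exactly the periodic boundary conditions that keep the eigen-elements explicit enough for this to go through.
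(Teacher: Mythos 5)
Your plan is the same in outline as the paper's: reduce to a moment problem via the transposition identity, test against the adjoint eigenfunctions $\Phi_k^\pm=(e^{ikx},\theta_k^\pm e^{ikx})$ and the constants $\Phi_0,\hat\Phi_0$, identify the boundary functional, check it does not vanish, and then solve the moment problem with a suitably decaying biorthogonal family.  Your matrix $A_k$, the identification of the boundary functional $B_n$, the observation that the two hypotheses $\ip{u_0}{1}=\ip{v_0}{1}=0$ remove the $k=0$ obstruction, and the convergence estimate at the end all agree with what the paper does (see \Cref{sect5} and the lemma preceding the proof of \Cref{thm2a}).  You also correctly flag that the nonvanishing of $B_n$ for small $k$ and the uniform separation of eigenvalues need to be checked by hand; the paper does precisely this (see \Cref{rem}, \Cref{fig6}, \Cref{fig7}).

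However, there is a genuine gap at the step you describe as ``routine'': the existence of a biorthogonal family $\{q_n\}$ in $L^2(0,T)$ with $\|q_n\|_{L^2(0,T)}\le C_\eps e^{\eps|\Re\lambda_n|}$.  You invoke this as a classical fact available from \cite{FR71,EC10,EC17,EC19}, described as following from a uniform gap, summability of $\sum|\lambda_n|^{-1}$, and the eigenvalues lying ``in a parabolic region asymptotic to the negative real axis.''  Two problems.  First, the KS-KdV branch has $\lambda_k^-=-k^4-ik^3+\dots$, so $|\Im\lambda_k^-|\sim|\Re\lambda_k^-|^{3/4}$, which grows \emph{faster} than $\sqrt{|\Re\lambda_k^-|}$; the sequence does \emph{not} lie in the parabolic region $\{|\Im\lambda|\le C\sqrt{|\Re\lambda|}\}$ that the classical complex-exponential results (Fattorini--Russell type, or the constructions in the Cerpa-style KS papers you cite, which all deal with a single fourth-order branch or a single second-order branch) are built around.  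Second, and more to the point, this paper's entire \Cref{sect6} is devoted to building that biorthogonal family, precisely because the mixture of a $-k^2$ branch and a $-k^4$ branch does not fit any off-the-shelf theorem the authors were willing to cite without proof: they construct the canonical product $P=P_1P_2/z$ by relating $P_1$ to a sine-type function in $\sqrt z$ and $P_2$ to one in $\sqrt[4]{z}$ via Rosier's method, then build branch-specific Beurling--Malliavin multipliers $M_1,M_2$ to compensate the different growth rates of $P_1,P_2$ on $\rea$, and finally read off \Cref{prop1} through Paley--Wiener.  The estimates they obtain, e.g.\ $\|\Theta_k^-\|\le C|k|^5 e^{-\frac{T}{2}k^4+(2\sqrt2+1)\pi k^2+c\pi|k|}$, are decisively sharper than a bound of the type $C_\eps e^{\eps|\Re\lambda_n|}$ (although either would close the convergence argument), and they do not drop out of a general theorem cited in your list.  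The paper does mention \cite{ter14} as a possible alternative source, but you do not cite it, and in any case the verification that its hypotheses hold for this two-branch, non-parabolic spectrum is not ``routine.''  So your proposal is correct in skeleton and agrees with the paper's moment-method strategy, but treating the biorthogonal existence and its quantitative bound as a footnote hides the theorem's actual technical core.

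One smaller remark: the moment problem you derive has no constraint coming from the $k=0$ modes, but the identity tested against $\Phi_0$ and $\hat\Phi_0$ does produce the condition $\int_0^T q\,dt=0$ (trivially satisfied because $\gamma_0=0$ under your hypotheses).  Your construction $\tilde q=\sum_n d_n q_n$ only automatically satisfies $\int\tilde q=0$ if the biorthogonal family you use is built so that each $q_n$ integrates to zero, i.e.\ if the constant function $1=e^{0\cdot t}$ is included in the exponential system being biorthogonalised.  This is exactly what \Cref{prop1} does by keeping $\lambda_0=0$ in the family and producing $\Theta_0$; you should make the same point explicit, since otherwise the series for $q$ may leave a nonzero mean.
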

	\subsection{Short description of the method used}
	The proof of all the above-mentioned theorems are similar and is based on the celebrated method of moments.
	
	This method completely depends on the eigen-elements of the underlying spatial operator.%, describing the system in ODE setup as mentioned in  \Cref{sect1.4}.
	Roughly speaking about this method, one has to first find an identity equivalent to the null controllability of the considered system using the main system and its adjoint system. Further, using the eigenvectors of the associated adjoint operator as the terminal data of the adjoint system, one can derive a system of identities, moment problem. Thus, the question of null controllability boils down to the existence of a solution to the derived moment problem. Now, solving the moment problem is immediate once the existence of a family biorthogonal to a certain exponential family, involving the eigenvalues of the operator, with proper $L^2$- estimate is known. Thus, the main difficulty in solving the moment problem and hence getting the null controllability result for a system is to get the existence of desired biorthogonal family.
	
	The moment method along with its advantage of giving the explicit form of control, instead of just giving the existence of control, comes up with a limitation as well. Since this method completely depends on the spectral properties of the corresponding operator, it cannot be used to study the controllability of systems in higher dimensions in general.
	
	As we mentioned before, this method of reducing the problem of null controllability to moment problem was initially done by H.O. Fattorini, and D.L. Russel for linear parabolic equation in their work \cite{FR71}. They further generalized the result regarding the existence of the biorthogonal family of real exponentials in \cite{FR74}. 
	The construction of biorthogonal family has been further investigated for complex exponentials as well, for e.g., see \cite{fernandez2010boundary}, \cite{boy14}, \cite{ter14}, \cite{gon21}. All of these results require some conditions to be satisfied by the power of exponential.
	
	In our case, we need to get a family biorthogonal to $\{e^{-\lambda_k^\pm t}\}$, where $\lambda_k^\pm$ denote two parabolic branches of eigenvalues of the underlying spatial operator. Although, one can get the existence of such a family from \cite{ter14}, but over here we have employed the method of \cite{LR14} to construct the biorthogonal family. 
	The construction of biorthogonal family given in \cite{LR14} was for similar type of exponential family but with parabolic and hyperbolic branches of eigenvalues. Such branching of eigenvalues led to a new approach for biorthogonal construction and because of such nature of the eigenvalues, there was also a time constraint for the existence of biorthogonal. In this paper, we successfully employ this method for parabolic-parabolic nature of eigenvalues, and  are also able to remove the time constraint for the existence of biorthogonal, as expected. It is worth to mention that, the method of \cite{LR14} has been utilized in the papers \cite{SC15}, \cite{Tao}, \cite{gao}, \cite{Gao1}.
	\subsection{Organization of the work}
	\Cref{sect2} deals with the well-posedness results of the studied control systems. In \Cref{sect3}, we mainly find the eigenvalues and corresponding eigenvectors of adjoint of the underlying spatial operator associated to system \eqref{eq:main}. This section also contains the statement of the theorem concerning the existence of biorthogonal family. \Cref{sect4} and \Cref{sect5} are devoted to the proof of the main results of this paper. We next give the construction of the desired biorthogonal family in \Cref{sect6}. At last, we give the proof of well posedness result in Appendix C (\Cref{sect7}).
	\section*{Acknowledgments}  
	We would like to thank Dr. Rajib Dutta for reading our article carefully and for his valuable comments. We also thank Golam Mostafa Mondal for pointing out that an identity given in the proof of  Lemma 18 of \cite{AM21} does not hold for $x=0$ and helping us to compute the integral calculation of Appendix B (\Cref{secinq}). Manish kumar acknowledges the Prime Minister Research Fellowship Scheme, Govt. of India for financial support, and Subrata Majumdar acknowledges the Department of Atomic Energy and National Board for Higher Mathematics fellowship, Grant No. 0203/16(21)/2018-R\&D-II/10708.
	\section{Well Posedness}\label{sect2}
	\noindent
	In this section, we give the well-posedness results for all of the above-mentioned control systems. We define the solution of the boundary control systems by means of transposition scheme and then give the corresponding well-posedness result. 
	
	\noindent
	We first consider the following general control system:
	\begin{equation}\label{eq:main1}
	\begin{cases}
	u_t+ u_{xxxx}+u_{xxx}+u_{xx}=v_x+g_1,& (t,x)\in (0,T)\times(0, 2\pi),\\
	v_t- v_{xx}+ v_x=u_x+g_2, & (t,x)\in (0,T)\times(0, 2\pi),\\
	u(t,0)=u(t,2\pi)+q_1(t),u_x(t,0)=u_x(t, 2\pi), & t\in (0,T),\\
	u_{xx}(t,0)=u_{xx}(t, 2\pi), u_{xxx}(t,0)=u_{xxx}(t, 2\pi), & t\in (0,T),\\
	v(t,0)=v(t,2\pi)+q_2(t), v_x(t,0)=v_x(t, 2\pi), & t\in (0,T),\\
	u(0,x)=u_0(x), v(0,x)=v_0(x), & x \in (0, 2\pi).
	\end{cases}
	\end{equation} 
	
	\noindent
	Its adjoint system is then given by:
	\begin{align}\label{adjoint}
	\begin{cases}
	-\varphi_t+\varphi_{xxxx}-\varphi_{xxx}+\varphi_{xx}+\psi_x=h_1, & (t,x)\in (0,T)\times (0,2\pi),\\
	-\psi_t-\psi_{xx}-\psi_x+\varphi_x=h_2,& (t,x)\in (0,T)\times (0,2\pi),\\
	\varphi(t,0)=\varphi(t,2\pi),\, \varphi_x(t,0)=\varphi_x(t,2\pi), & t \in (0,T),\\
	\varphi_{xx}(t,0)=\varphi_{xx}(t,2\pi),\, \varphi_{xxx}(t,0)=\varphi_{xxx}(t,2\pi), & t \in (0,T),\\
	\psi(t,0)=\psi(t,2\pi),\, \psi_x(t,0)=\psi_x(t,2\pi), & t \in (0,T),\\
	\varphi(T,x)=\varphi_T(x), \, \psi(T,x)=\psi_T(x), & x\in (0,2\pi).
	\end{cases}		
	\end{align}  
	\begin{proposition}\label{adj well-prop}
		Let us denote the space $\mathbf{X}=L^2(0,T;L^2(0,2\pi))^2 \text{ or }L^1(0,T;\mathbf{H})$. Let $(h_1,h_2)\in \mathbf{X}$ and $(
		\varphi_T , \psi_T
		) \in \mathbf{H}$. Then the system \eqref{adjoint} has a unique solution $(
		\varphi,\psi
		) \in C([0,T];\mathbf{H})$. Moreover, we have the following estimate:
		\begin{align}\label{enrgy_est 1}
		\left|\left|(\varphi, \psi)\right|\right|_{C([0,T];\mathbf{H})\cap L^2(0,T;H^4\times H^2)} \leq C\left(||(h_1,h_2)||_{\mathbf{X}}+\left|\left|(\varphi_T,\psi_T)\right|\right|_{\mathbf{H}}\right),
		\end{align}
		for some $C>0$ independent of $h_1, h_2, \varphi_T, \psi_T$.
	\end{proposition}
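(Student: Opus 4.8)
The plan is to treat the adjoint system \eqref{adjoint} as a backward linear evolution equation and to run a semigroup / Galerkin argument, deriving the energy estimate \eqref{enrgy_est 1} along the way. First I would reverse time by setting $\wtil\varphi(t,x)=\varphi(T-t,x)$, $\wtil\psi(t,x)=\psi(T-t,x)$, so that the problem becomes a forward Cauchy problem with initial data $(\varphi_T,\psi_T)\in\mathbf{H}$ governed by an operator $A^*$ which is (formally) the $\mathbf{Z}$-adjoint of $A$ from \eqref{A}, namely
\begin{equation*}
	A^*=\begin{pmatrix}
		-\dfrac{d^4}{dx^4}+\dfrac{d^3}{dx^3}-\dfrac{d^2}{dx^2} & \dfrac{d}{dx}\\[2mm]
		\dfrac{d}{dx} & \dfrac{d^2}{dx^2}+\dfrac{d}{dx}
	\end{pmatrix},\qquad D(A^*)=H^4_{per}\tor\times H^2_{per}\tor.
\end{equation*}
The periodic boundary conditions in \eqref{adjoint} are exactly the ones encoding $D(A^*)$, so no boundary terms survive in integration by parts. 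I would check that $A^*$ generates a $C_0$-semigroup on $\mathbf{Z}$ (indeed on each $H^s_{per}$-scale) — the cleanest route is to diagonalize in the Fourier basis $\{e^{-ikx}\}_{k\in\z}$, where $A^*$ becomes a sequence of $2\times 2$ blocks, and to observe that the real part of the eigenvalues is bounded above (the $-k^4$ term dominates), giving a quasi-contraction bound $\norm{e^{tA^*}}\le Ce^{\omega t}$ and, more importantly, the parabolic smoothing $\norm{e^{tA^*}}_{\mathbf{Z}\to H^4\times H^2}\lesssim t^{-1}$ for small $t$. This is the standard analytic-semigroup picture for a fourth-order parabolic operator perturbed by lower-order terms.

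Next I would write the mild/variational solution by Duhamel,
\begin{equation*}
	\begin{pmatrix}\wtil\varphi\\ \wtil\psi\end{pmatrix}(t)=e^{tA^*}\begin{pmatrix}\varphi_T\\ \psi_T\end{pmatrix}+\int_0^t e^{(t-s)A^*}\begin{pmatrix}\wtil h_1(s)\\ \wtil h_2(s)\end{pmatrix}\,ds,
\end{equation*}
and prove the two regularity statements separately according to the two choices of $\mathbf{X}$. For $(h_1,h_2)\in L^1(0,T;\mathbf{H})$ the contraction-semigroup bound on $\mathbf{H}$ gives $(\varphi,\psi)\in C([0,T];\mathbf{H})$ directly, with $\norm{(\varphi,\psi)}_{C([0,T];\mathbf{H})}\le C(\norm{(h_1,h_2)}_{L^1(\mathbf{H})}+\norm{(\varphi_T,\psi_T)}_{\mathbf{H}})$; the additional $L^2(0,T;H^4\times H^2)$ bound then comes from the standard maximal-regularity / energy estimate for analytic semigroups (multiply the equation by $A^*(\wtil\varphi,\wtil\psi)$, integrate, absorb lower-order terms using Young's inequality and the spectral gap). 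For $(h_1,h_2)\in L^2(0,T;L^2(0,2\pi))^2$ one instead uses the smoothing estimate: $e^{tA^*}$ maps $\mathbf{Z}=L^2\times L^2$ into $\mathbf{H}$ with norm $O(t^{-1/2})$, which is $L^2$-integrable near $0$, so the convolution with an $L^2(0,T;\mathbf{Z})$ source lands in $C([0,T];\mathbf{H})$, and a parabolic energy estimate again upgrades this to $L^2(0,T;H^4\times H^2)$. Uniqueness follows from linearity together with the energy identity applied to the difference of two solutions (which forces it to vanish since the data and source are zero).

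The technical heart — and the step I expect to require the most care — is the coupled energy estimate itself, because $A^*$ is \emph{not} self-adjoint: it mixes the fourth-order KS-KdV operator and the second-order heat operator through the first-order off-diagonal terms $\psi_x,\varphi_x$, and it also contains the antisymmetric $+\partial_x^3$ and $\pm\partial_x$ pieces. The plan is to handle this by testing the first equation against $\varphi$ (and against $\varphi_{xxxx}$ for the higher estimate) and the second against $\psi$ (resp. $\psi_{xx}$), adding, and watching the cross terms $\intg \psi_x\ba\varphi + \intg\varphi_x\ba\psi$ cancel up to a term controlled by $\tfrac12(\norm{\varphi_x}^2+\norm{\psi_x}^2)$, which is then absorbed into the good dissipative terms $\norm{\varphi_{xx}}^2$ and $\norm{\psi_x}^2$ via interpolation (Gagliardo–Nirenberg) and Gronwall. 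The odd-order terms $\intg\varphi_{xxx}\ba\varphi$ and $\intg\varphi_x\ba\varphi$ are purely imaginary hence contribute nothing to the real part of the $\mathbf{Z}$-inner product, so only genuinely dissipative and lower-order-absorbable terms remain. Once this estimate closes uniformly in $T$ on bounded time intervals, combining it with a Galerkin (Fourier-truncation) approximation scheme — which is automatic here since $A^*$ is already diagonal in blocks — yields existence, and passing to the limit preserves \eqref{enrgy_est 1}. The constant $C$ depends only on $T$ and the fixed coefficients, not on the data, as required.
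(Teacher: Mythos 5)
Your overall strategy — time reversal, Fourier diagonalization of the constant-coefficient operator, and a parabolic energy estimate with the multipliers $\bar\varphi,\bar\psi,\bar\varphi_{xxxx},\bar\psi_{xx}$, handling the two source spaces separately — is essentially the paper's proof, merely wrapped in semigroup/Duhamel language. The paper does not invoke $e^{tA^*}$ at all; it obtains existence and the estimate \eqref{enrgy_est 1} directly from the \emph{a priori} energy inequalities plus Gronwall, splitting into the two cases $L^2(0,T;L^2)^2$ and $L^1(0,T;\mathbf{H})$ exactly as you do, and observing (as you correctly note) that the odd-order diagonal terms $\partial_x^3,\partial_x$ contribute nothing to the real part and that the off-diagonal cross terms cancel in the real part after integration by parts. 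In that sense the technical content you flag as the ``heart'' coincides with the paper's argument, and your semigroup scaffolding is an equivalent packaging rather than a genuinely different route.

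There is, however, one concrete error in the justification of the $L^2(0,T;\mathbf{Z})$ case: you claim that the smoothing rate $\|e^{tA^*}\|_{\mathbf{Z}\to\mathbf{H}}=O(t^{-1/2})$ is ``$L^2$-integrable near $0$'' and use this to conclude $C([0,T];\mathbf{H})$ regularity of the Duhamel term. This is false: $t^{-1/2}\notin L^2(0,\delta)$ since $\int_0^\delta t^{-1}\,dt$ diverges. The naive Young-type convolution bound therefore does not close. The correct way to get $C([0,T];\mathbf{H})$ in this case is maximal $L^2$-regularity (which you do mention afterwards): from the energy estimate one first gets $(\varphi,\psi)\in L^2(0,T;H^4\times H^2)$ with $(\varphi_t,\psi_t)\in L^2(0,T;\mathbf{Z})$, and then the standard interpolation $L^2(D(A^*))\cap H^1(\mathbf{Z})\hookrightarrow C([0,T];[D(A^*),\mathbf{Z}]_{1/2})=C([0,T];\mathbf{H})$ yields continuity. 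This is precisely what the paper does implicitly in its last paragraph, and it is the step your convolution argument should be replaced by. Separately, as a minor remark, your displayed $A^*$ has $+\tfrac{d}{dx}$ off-diagonal whereas the paper's \eqref{eq:A*} has $-\tfrac{d}{dx}$; this sign does not affect the energy estimate (only the cross terms' sign, which vanishes in the real part anyway) but should match the paper's adjoint computation.
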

	\begin{proof}
		Proof of this proposition can be found in Appendix C (\Cref{sect7}).
	\end{proof}
	Using the continuous embedding of $H^k(0,2\pi)$ in $C^{k-1}(0,2\pi)$ for $k=2,4$, we conclude the follwoing from \Cref{adj well-prop}:
	\begin{proposition}\label{prop2.2}
		There exist $C>0$ such that the solution $(\varphi,\psi)$ of \eqref{adjoint} satisfies
		\begin{align}
			\nonumber ||\psi_x(\cdot,0)||_{L^2(0,T)}+||\varphi_{xxx}(\cdot,0)||_{L^2(0,T)}&+||\varphi_{xx}(\cdot,0)||_{L^2(0,T)}\\&\leq C\left(\,||(h_1,h_2)||_{\mathbf{X}}+\left|\left|(\varphi_T,\psi_T)\right|\right|_{\mathbf{H}}\right).
		\end{align}
	\end{proposition}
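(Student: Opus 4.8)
The plan is to deduce the boundary trace estimates directly from the interior regularity estimate of Proposition 2.1 together with the Sobolev embedding $H^k(0,2\pi)\hookrightarrow C^{k-1}([0,2\pi])$ for $k=2,4$. The key point is that the quantities $\psi_x(\cdot,0)$, $\varphi_{xx}(\cdot,0)$ and $\varphi_{xxx}(\cdot,0)$ are, at each fixed time $t$, evaluations of lower-order spatial derivatives of $(\varphi,\psi)$, and these evaluations are controlled by the $H^4\times H^2$ spatial norm, which in turn lies in $L^2(0,T)$ thanks to \eqref{enrgy_est 1}.

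First I would fix $t\in(0,T)$ and apply the embedding $H^2(0,2\pi)\hookrightarrow C^1([0,2\pi])$ to $\psi(t,\cdot)$: this gives $|\psi_x(t,0)|\le C\|\psi(t,\cdot)\|_{H^2(0,2\pi)}$. Similarly, applying $H^4(0,2\pi)\hookrightarrow C^3([0,2\pi])$ to $\varphi(t,\cdot)$ yields $|\varphi_{xx}(t,0)|+|\varphi_{xxx}(t,0)|\le C\|\varphi(t,\cdot)\|_{H^4(0,2\pi)}$. Squaring these pointwise-in-$t$ bounds and integrating over $(0,T)$ gives
\begin{align*}
\|\psi_x(\cdot,0)\|_{L^2(0,T)}^2 &\le C\int_0^T\|\psi(t,\cdot)\|_{H^2(0,2\pi)}^2\,dt = C\|\psi\|_{L^2(0,T;H^2)}^2,\\
\|\varphi_{xx}(\cdot,0)\|_{L^2(0,T)}^2+\|\varphi_{xxx}(\cdot,0)\|_{L^2(0,T)}^2 &\le C\|\varphi\|_{L^2(0,T;H^4)}^2.
\end{align*}
Then I would invoke \eqref{enrgy_est 1}, which bounds $\|(\varphi,\psi)\|_{L^2(0,T;H^4\times H^2)}$ by $C(\|(h_1,h_2)\|_{\mathbf{X}}+\|(\varphi_T,\psi_T)\|_{\mathbf{H}})$, and combine the three displayed inequalities (taking square roots and using $\sqrt{a+b}\le\sqrt a+\sqrt b$) to reach the claimed estimate.

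There is essentially no hard part here: the proposition is a routine corollary of Proposition 2.1. The only mild subtlety is the measurability in $t$ of the trace maps $t\mapsto\psi_x(t,0)$ etc., which follows because $(\varphi,\psi)\in L^2(0,T;H^4\times H^2)$ and the trace/evaluation functionals are bounded linear maps on the relevant Sobolev spaces, hence the compositions are measurable; the $L^2(0,T)$ bound above then also certifies that these traces indeed belong to $L^2(0,T)$, so the left-hand side of the asserted inequality is well defined. This completes the argument. \qed
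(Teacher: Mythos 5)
Your argument is correct and is exactly the one the paper indicates: the paper states Proposition~2.2 as an immediate consequence of Proposition~2.1 via the embeddings $H^k(0,2\pi)\hookrightarrow C^{k-1}([0,2\pi])$ for $k=2,4$, which is precisely the pointwise-in-$t$ trace bound you write out and then integrate. Nothing more is needed.
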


	Let us now define the solution of the system \eqref{eq:main1} by means of transposition scheme to state the well-posedness result for the same.%\eqref{eq:main1}.
	\begin{definition}
		Let $(u_0,v_0)\in \mathbf{H}^*$,  $(g_1,g_2)\in L^2(0,T;(H^2_{per})\times L^2(0,T;(H^1_{per}))$ and $(q_1,q_2)\in (L^2(0,T))^2$. We say $(u,v)\in (L^2(0,T;L^2))^2$ is a solution of the system \eqref{eq:main1}, if for any $(h_1,h_2) \in (L^2(0,T;L^2))^2$ it satisfies:
		\begin{align*}
		\nonumber &\int_{0}^T \int_0^{2\pi} {u(t,x)}\overline {h_1(t,x)}\,dx\,dt+\int_0^T\int_0^{2\pi}  v(t,x)\overline{h_2(t,x)}\,dx\,dt\\&\nonumber=\int_0^T \left(-\overline{\varphi_{xxx}(t,0)}+\overline{\varphi_{xx}(t,0)}-\overline{\varphi_x(t,0)}-\overline{\psi(t,0)}\right)q_1(t)\, dt+\int_0^T\left(\overline{ \psi_x(t,0)}+\overline{ \psi(t,0)}-\overline{ \varphi(t,0)}\right)q_2(t)\,dt\\&\hspace{69 mm}+\ip{g_1}{\varphi}_{L^2((H^2_{per})^*),L^2(H^2_{per})}+\ip{g_2}{\psi}_{L^2((H^1_{per})^*),L^2(H^1_{per})}\\&\hspace{75 mm}+\ip{u_0}{\varphi(0,\cdot)}_{(H_{per}^2)^*,H_{per}^2}+\ip{v_0}{\psi(0,\cdot)}_{{(H_{per}^1)}^*, H_{per}^1},
		\end{align*}
		where $(\varphi, \psi
		)$ is solution of \eqref{adjoint} with $(\varphi(T,\cdot),\psi(T,\cdot))=(0,0)$.
	\end{definition}
	\begin{proposition}\label{well-prop3}
		Let $(g_1,g_2) \in L^2(0, T; {(H^2_{per})}^*) \times L^2(0, T; {(H^1_{per})}^*)$. Then, for any $(u_0,v_0)\in \mathbf{H}^*$ and $(q_1,q_2)\in (L^2(0,T))^2$, system \eqref{eq:cntrl2} has a unique solution $(u,v) \in C([0,T];\mathbf{H}^*)\cap L^2(0,T;L^2)^2$.
	\end{proposition}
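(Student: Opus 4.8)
The plan is to construct the solution by the classical transposition (duality) argument, using the a priori estimates for the adjoint problem already recorded in Propositions~\ref{adj well-prop} and \ref{prop2.2}, and then to upgrade the $(L^2(0,T;L^2))^2$ membership produced by the Riesz theorem to continuity in time with values in $\mathbf H^*$ by a regularization and density argument.

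Concretely, I would fix the data $(u_0,v_0)\in\mathbf H^*$, $(g_1,g_2)\in L^2(0,T;(H^2_{per})^*)\times L^2(0,T;(H^1_{per})^*)$ and $(q_1,q_2)\in(L^2(0,T))^2$, and, given any $(h_1,h_2)\in(L^2(0,T;L^2))^2$, let $(\varphi,\psi)$ denote the solution of \eqref{adjoint} with terminal data $(\varphi(T,\cdot),\psi(T,\cdot))=(0,0)$, which exists and is unique by Proposition~\ref{adj well-prop}. Define $\mathcal L(h_1,h_2)$ to be the right-hand side of the identity in the Definition preceding the statement. The first thing to check is that $\mathcal L$ is a bounded conjugate-linear functional on $(L^2(0,T;L^2))^2$: the terms carrying $q_1,q_2$ are controlled by $\bigl(\norm{q_1}_{L^2(0,T)}+\norm{q_2}_{L^2(0,T)}\bigr)$ times the $L^2(0,T)$-norms of the traces $\varphi(\cdot,0),\varphi_x(\cdot,0),\varphi_{xx}(\cdot,0),\varphi_{xxx}(\cdot,0),\psi(\cdot,0),\psi_x(\cdot,0)$, which are in turn bounded by $\norm{(h_1,h_2)}_{(L^2(0,T;L^2))^2}$ thanks to Proposition~\ref{prop2.2} together with the embeddings $H^4_{per}\hookrightarrow C^3$, $H^2_{per}\hookrightarrow C^1$ and the $L^2(0,T;H^4\times H^2)$-bound in \eqref{enrgy_est 1}; the terms carrying $g_1,g_2$ are bounded by the duality pairings and the $L^2(0,T;\mathbf H)$-part of \eqref{enrgy_est 1}; and the terms carrying $u_0,v_0$ by $\norm{(u_0,v_0)}_{\mathbf H^*}$ times the $C([0,T];\mathbf H)$-bound in \eqref{enrgy_est 1} evaluated at $t=0$. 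By the Riesz representation theorem there is then a unique $(u,v)\in(L^2(0,T;L^2))^2$ with $\mathcal L(h_1,h_2)=\int_0^T\!\!\int_0^{2\pi}\bigl(u\,\overline{h_1}+v\,\overline{h_2}\bigr)\,dx\,dt$; this is, by definition, the transposition solution, it satisfies $\norm{(u,v)}_{(L^2(0,T;L^2))^2}\le C\bigl(\norm{(u_0,v_0)}_{\mathbf H^*}+\norm{(g_1,g_2)}_{L^2((H^2_{per})^*)\times L^2((H^1_{per})^*)}+\norm{(q_1,q_2)}_{(L^2(0,T))^2}\bigr)$, and uniqueness is immediate since the difference of two solutions pairs to zero against every $(h_1,h_2)$.

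It then remains to see that $(u,v)$ has a representative in $C([0,T];\mathbf H^*)$ with the corresponding bound. Here I would first treat regular data, say $(u_0,v_0)\in\mathbf H$, $(g_1,g_2)\in(L^2(0,T;L^2))^2$ and $q_1,q_2\in C^\infty_c(0,T)$: subtracting from $(u,v)$ a fixed smooth lifting carrying the prescribed periodic jumps (for instance an affine-in-$x$ one) reduces the system to one with homogeneous periodic boundary conditions, an $L^2(0,T;L^2)$ right-hand side and unchanged $\mathbf H$ initial data, which admits a solution in $C([0,T];\mathbf H)\subset C([0,T];\mathbf H^*)$ by the same analysis that underlies Proposition~\ref{adj well-prop} (the operator being of the same type as in \eqref{adjoint}); an integration by parts against solutions of \eqref{adjoint} then shows that this function solves the transposition identity, hence equals $(u,v)$ in this case. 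For general data I would approximate $(u_0,v_0),(g_1,g_2),(q_1,q_2)$ by such regular data in $\mathbf H^*\times\bigl(L^2(0,T;(H^2_{per})^*)\times L^2(0,T;(H^1_{per})^*)\bigr)\times(L^2(0,T))^2$ and invoke the estimate of the previous step — now read with the $\mathbf H^*$-norm on the left — to conclude that the corresponding transposition solutions form a Cauchy sequence in $C([0,T];\mathbf H^*)$; its limit is the sought solution, and membership in $(L^2(0,T;L^2))^2$ with the stated estimate passes to the limit.

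I expect the last step to be the main obstacle. The transposition construction by itself produces only an element of $(L^2(0,T;L^2))^2$, so upgrading it to a function continuous in time with values in $\mathbf H^*$ forces one to go through regular data together with a lifting of the inhomogeneous boundary condition; the lifting must be chosen compatibly with the initial datum at $t=0$ so that the regularized problems stay within reach of the semigroup/energy theory and the limit can actually be taken. Moreover the boundary datum enters the energy balance as a distribution concentrated at $x=0$ rather than as an $L^2$ source, and keeping track of its contribution — so that the trace estimates of Proposition~\ref{prop2.2} are exactly what is needed — is the delicate bookkeeping of the argument.
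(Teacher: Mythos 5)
Your proof is correct and takes essentially the same route the paper has in mind: the paper's own ``proof'' is nothing more than a pointer to Theorem~2.8 of \cite{EC12}, whose argument is precisely the transposition construction you carry out --- a bounded conjugate-linear functional on $(L^2(0,T;L^2))^2$ whose norm is controlled by Propositions~\ref{adj well-prop} and~\ref{prop2.2}, Riesz representation to produce the $(L^2(0,T;L^2))^2$ solution, and an approximation argument to place it in $C([0,T];\mathbf H^*)$.

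One small point worth tightening in the last step: the phrase ``the estimate of the previous step, now read with the $\mathbf H^*$-norm on the left'' glosses over the fact that the energy estimate for regular data controls the $C([0,T];\mathbf H)$-norm by the $\mathbf H$-norms of the data, which is the wrong pair of norms for passing to the limit when the data are only Cauchy in $\mathbf H^*$. The needed $C([0,T];\mathbf H^*)$ bound in terms of the $\mathbf H^*$-norm of $(u_0,v_0)$ is obtained by a separate duality step: for each fixed $t\in[0,T]$, pair $(u(t,\cdot),v(t,\cdot))$ against an arbitrary $(\varphi_T,\psi_T)\in\mathbf H$ via the adjoint system \eqref{adjoint} solved backward from terminal time $t$ (with $h_1=h_2=0$), and use Propositions~\ref{adj well-prop} and~\ref{prop2.2} with $T$ replaced by $t$ (the constants being uniform for $t\in[0,T]$). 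This gives $\sup_{t\in[0,T]}\norm{(u(t,\cdot),v(t,\cdot))}_{\mathbf H^*}\le C\bigl(\norm{(u_0,v_0)}_{\mathbf H^*}+\norm{(g_1,g_2)}+\norm{(q_1,q_2)}\bigr)$ for regular data, after which your density argument closes as stated.
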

	\begin{proof}
		Using \Cref{adj well-prop} and \Cref{prop2.2}, and following the proof of well-posedness result in \cite{EC12} (Theorem 2.8), one can easily obtain the above result.
	\end{proof}
	
	\section{Spectral analysis for the adjoint operator $A^*$}\label{sect3}

	The adjoint of the underlying spatial operator $A$ given by \ref{A}, can be written as:
	\begin{equation}\label{eq:A*}
	A^*=\begin{pmatrix} - \dfrac{d^4}{dx^4}+\dfrac{d^3}{dx^3}- \dfrac{d^2}{dx^2} & &-\dfrac{d}{dx}  \\ \\ -\dfrac{d}{dx}  &  &\dfrac{d^2}{dx^2}+\dfrac{d}{dx}  \end{pmatrix}  ,
	\end{equation}
	with $$D(A^*)=D(A)=H^4_{{per}}\tor\times {H}^2_{{per}}\tor.$$
	
	Thus, the adjoint system of \eqref{eq:main} given as $-\varPhi'(t)=A^*\varPhi(t)$ is equivalent to
	\begin{equation}\label{eq:adj}
	\begin{cases}
	\varphi_t- \varphi_{xxxx}+\varphi_{xxx} -  \varphi_{xx}- \psi_x=0, &  x \in \tor , \,t \in (0,T)\\
	\psi_t +\psi_{xx}+ \psi_x - \varphi_x=0, &  x \in \tor , \,t \in (0,T),\\
	\varphi(t,0)= \varphi(t, 2\pi), \varphi_x(t,0)= \varphi_x(t, 2\pi), & t \in (0,T), \\
	\varphi_{xx}(t,0)= \varphi_{xx}(t, 2\pi), \varphi_{xxx}(t,0)= \varphi_{xxx}(t, 2\pi), & t \in (0,T),\\
	\psi(t,0)= \psi(t, 2\pi),\,\psi_x(t,0)= \psi_x(t, 2\pi) & t \in (0,T),\\
	\varphi(T,x)= \varphi_T(x), \psi(T,x)= \psi_T(x), &  x \in \tor,
	\end{cases}
	\end{equation}
	where, $\varPhi=\begin{pmatrix}
	\varphi \\ \psi
	\end{pmatrix}$.
	
	\noindent
	We now compute the eigenvalues of $A^*$. Let us write the eigen equation corresponding to the adjoint operator $A^*$ as  
	\begin{equation*}
	A^*\begin{pmatrix}
	\xi\\
	\eta	
	\end{pmatrix}=\lambda \begin{pmatrix}
	\xi\\
	\eta	
	\end{pmatrix}, \quad \lambda \in \cplx,
	\end{equation*}
	which is explicitly the following
	\begin{equation*}
	\begin{cases}
	-\xi^{(iv)}+\xi'''-\xi''-\eta'=\lambda \xi,\\
	\eta''+\eta'-\xi'=\lambda \eta,\\
	\xi^i(0)=\xi^i(2\pi), \quad i=0,1,2,3,\\
	\eta^j(0)=\eta^j(2\pi), \quad j=0,1.
	\end{cases}
	\end{equation*} 
	Combining the first two equations, we get
	\begin{equation}\label{eq:eigen}
	-\xi^{(vi)}+\lambda \xi^{(iv)}-(1+\lambda)\xi'''-\xi''-\lambda \xi'+\lambda^2 \xi=0.
	\end{equation}
	Expanding $\xi$ as a Fourier series $\xi=\sum_{k\in \z} \xi_k e^{ikx}$, and substituting in \eqref{eq:eigen}, we get
	\begin{equation*}
	\lambda^2+\lambda (k^4+ik^3-ik)+k^6+ik^3+k^2=0, \text{ for each }k\in \z.
	\end{equation*}
	For $k=0,$ the above equation has double roots $\lambda_0=0,0$ and when $k\in \z \setminus\{0\}$,
	the expression
	\begin{equation*}
	\lambda_k^\pm=\frac{1}{2}[-(k^4+ik^3-ik)\pm\sqrt{(k^4+ik^3-ik)^2-4(k^6+ik^3+k^2)} ] 
	\end{equation*}
	solves the equation, which have the following asymptotic expressions:
	\begin{align}
\label{sph}	&\lambda_k^{+}=-k^2+ik+iO(|k|^{-1})+O(|k|^{-2}), \quad \text{as} \abs{k}\to \infty \\
\label{spp}	&\lambda_k^{-}=-k^4-ik^3+k^2+iO(|k|^{-1})+O(|k|^{-2}), \quad \text{as} \abs{k}\to \infty .
	\end{align}
	\begin{figure}[H]
	\centering
			\includegraphics[width=0.8\textwidth, height=0.5\textwidth]{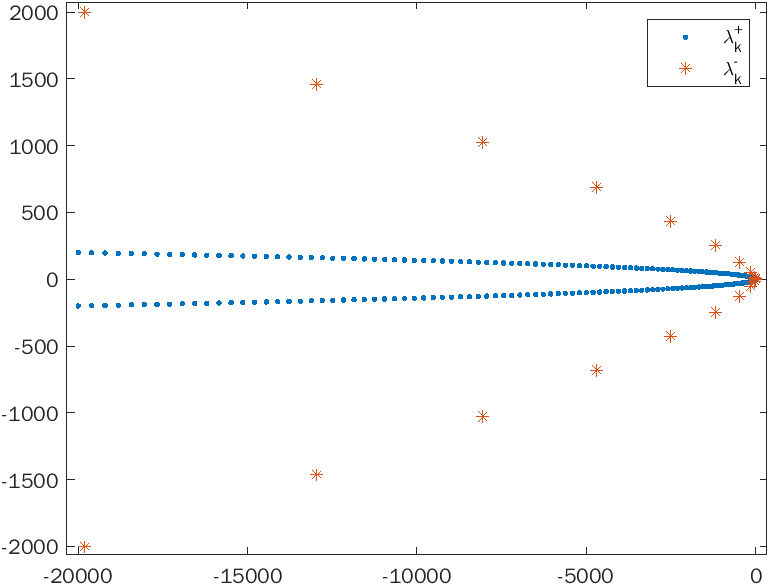}
			\caption{Spectral behavior of $A^*$}
			\label{fig1}
	\end{figure}
	\begin{figure}[H]
		\begin{subfigure}[b]{0.47\textwidth}
			\includegraphics[width=1\textwidth]{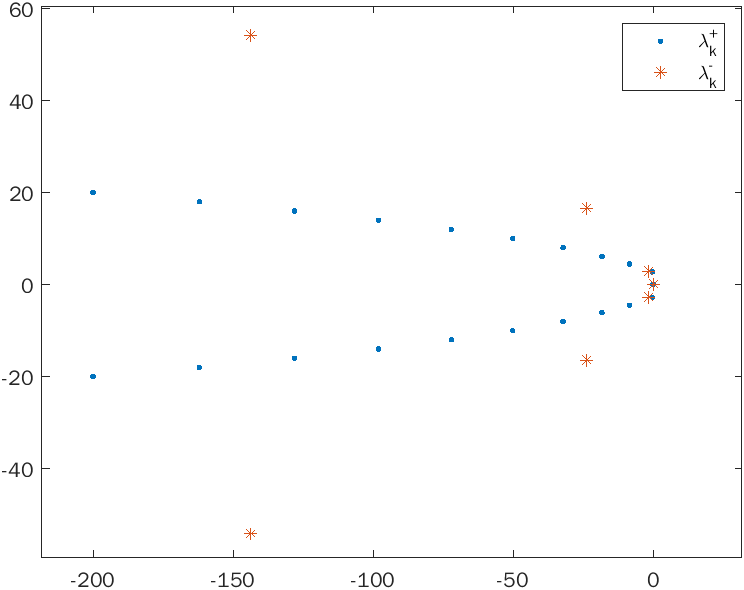}
		\end{subfigure}
		\begin{subfigure}[b]{0.47\textwidth}
			\includegraphics[width=1\textwidth]{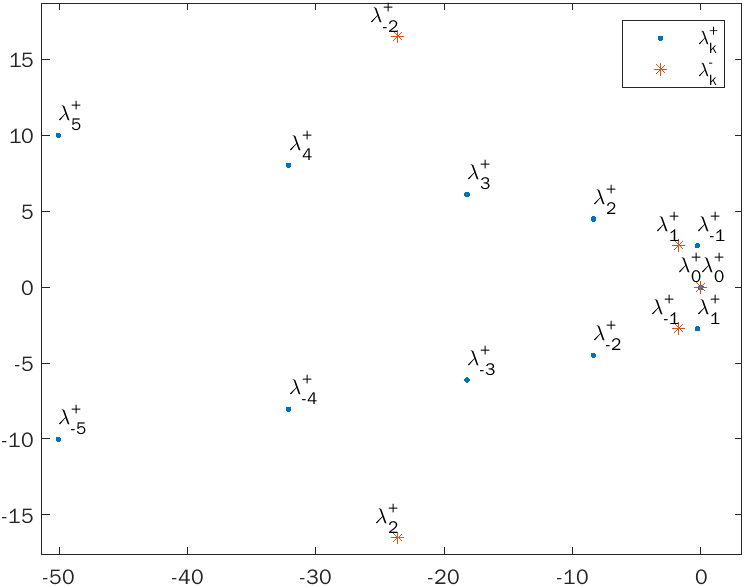}
		\end{subfigure}
		\caption{$\lambda_{k}^\pm$ are distinct for first finite number of $k(\neq0)$, symmetric about 0}
		\label{fig2}
	\end{figure}
	\begin{remark}\label{rem}
		Using the asymptotic expression of $\lambda_k^\pm$, \eqref{sph} and \eqref{spp}, and \Cref{fig2}, one can easily conclude that all the elements of the set of eigenvalues $\{\lambda_{k}^\pm\}_{k\in\z\setminus\{0\}}$ are distinct and nonzero. 
	\end{remark}
	Note that $\Phi_0=\begin{pmatrix}
	1\\0
	\end{pmatrix} \text{and}\, \hat{\Phi}_0=\begin{pmatrix}
	0\\1
	\end{pmatrix}$ are two linearly independent eigenvectors corresponding to $\lambda_0=0.$ For $k\in \z\setminus\{0\}$, the eigenvectors are given by 
	\begin{align*}
	\Phi_k^{+}=\begin{pmatrix}
	e^{ikx} \\\theta_k^{+}e^{ikx}\end{pmatrix}, \quad \Phi_k^{-}= \begin{pmatrix}
	e^{ikx} \\\theta_k^{-}e^{ikx}
	\end{pmatrix},
	\end{align*}
	where $\theta_k^\pm=\frac{\eta_k^\pm}{\lambda_k^\pm}$ with  $\eta_k^\pm=-ik^5-(1+\lambda_k^\pm)ik+k^2-\lambda_k^\pm$ are well defined for $k\neq0$ as $\lambda_k^\pm\neq0$  by \Cref{rem}. 
	\begin{figure}[H]
		\centering
		\begin{subfigure}[b]{0.32\textwidth}
			\includegraphics[width=1\textwidth]{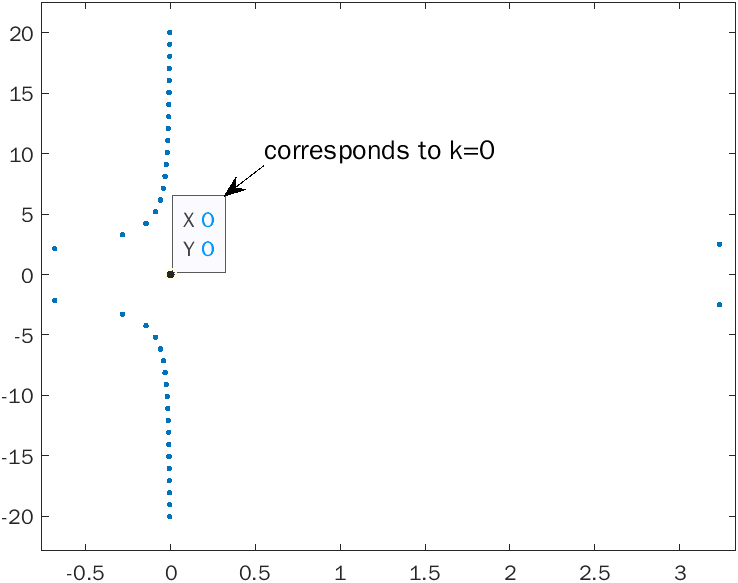}
			\caption{$\eta_k^+$}
		\end{subfigure}
		\begin{subfigure}[b]{0.32\textwidth}
			\includegraphics[width=1\textwidth]{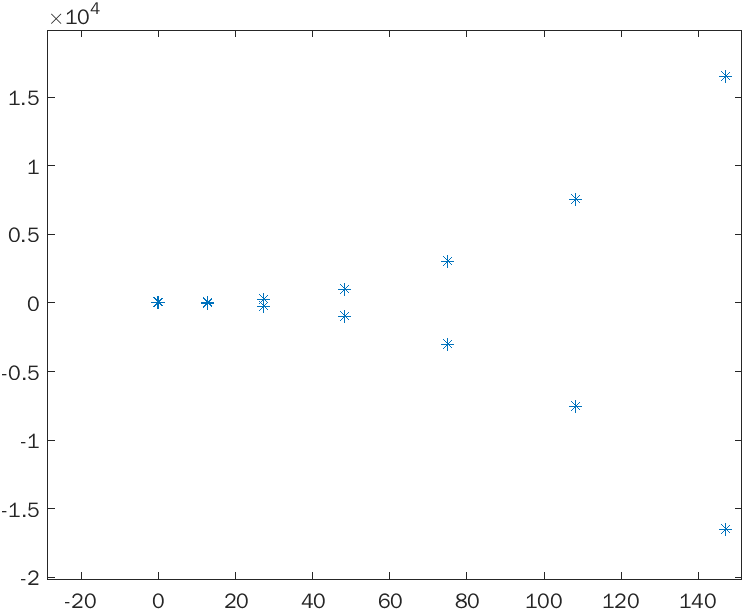}
			\caption{$\eta_k^-$}
		\end{subfigure}
		\begin{subfigure}[b]{0.32\textwidth}
			\includegraphics[width=1\textwidth]{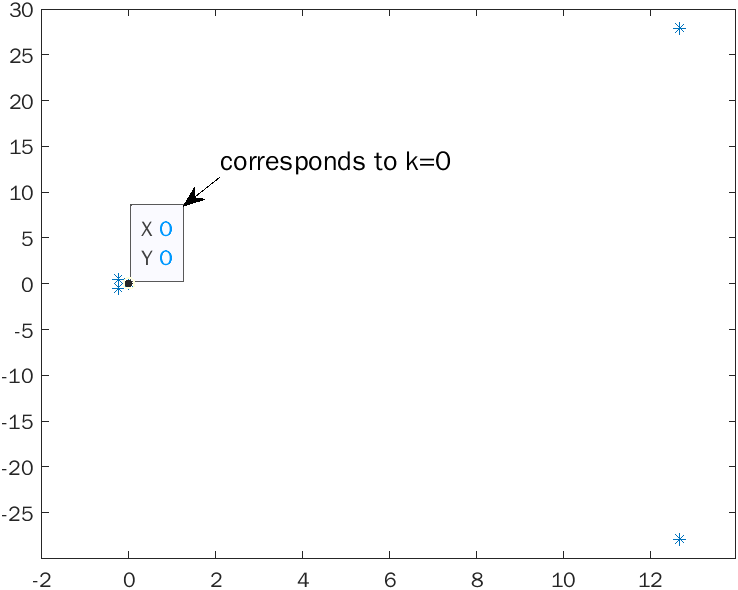}
			\caption{zoomed $\eta_k^-$}
		\end{subfigure}
		\caption{Plot of $\eta_k^\pm$ for first finite number of $k$, symmetric about 0}
		\label{fig3}
	\end{figure}
\begin{remark}\label{rem1}
	Using the asymptotic expression of $\eta_k^\pm$ and \Cref{fig3}, one can easily conclude that $\eta_k^\pm$ and hence $\theta_k^\pm$ are nonzero for $k\neq 0$.
	Moreover, we have:
	\begin{align*}
		\abs{\theta_k^+}<  C|k|^3 \text{ and } \abs{{\theta_k^-}} < \frac{c}{|k|^3},\text{ for } c,C>0, \text{ as } |k| \to \infty.
	\end{align*}
\end{remark}
	We now state the result corresponding to existence of biorthogonal family of $\{1, e^{\lambda_k^+t}, e^{\lambda_k^-t}\}_{k\in \z\setminus\{0\}}$, which will eventually help to solve the moment problems, to be derived in \Cref{sect4} and \Cref{sect5}.
	\begin{proposition}[\textbf{Biorthogonal family}]\label{prop1}
		Let $T>0$ and denote $\lambda_0=\lambda_0^+=\lambda_0^-=0.$ Then there exists a family $\{\Theta_k^{\pm}\}_{k\in \z\setminus\{0\}} \cup \{\Theta_0\}$ of functions in $L^2\left(-\frac{T}{2}, \frac{T}{2}\right)$ such that 
		\begin{equation*}
			\begin{cases}
		\int_{-\frac{T}{2}}^{\frac{T}{2}} \Theta_k^{\pm}(t)  e^{-\overline{\lambda_l^\pm} t} dt= \delta_{kl} \delta_\pm,\hspace{2 mm} l\in \zahl, k \in \z\setminus\{0\}, \vspace{0.2cm}\\ 
		\int_{-\frac{T}{2}}^{\frac{T}{2}} \Theta_0(t)  e^{\overline{-\lambda_l^\pm} t} dt=\delta_{0l},\hspace{2 mm} l \in \z,
		\end{cases}
		\end{equation*}	
		where,
		\begin{align*}
		\delta_{kl}=\begin{cases}1, & \text{ if } k=l, \\
		0, & \text{ if } k\neq l\end{cases} 
		\text{ and } 
		\delta_{\pm}=\begin{cases}1, & \text{ if sign on $\Theta_k$ and $\lambda_k$ is same}  \\
		0, & \text{ otherwise }.\end{cases} 
		\end{align*}
		Moreover, we have the following estimates
		\begin{align*}
		||\Theta_k^+||_{L^2(\rea)}&\leq C\,|k|^2\,e^{-\frac{T}{2} k^2 -2\pi |k|^{\frac{1}{2}}+3(3+2\sqrt{2})\pi|k|},\\
		||\Theta_k^-||_{L^2(\rea)}&\leq C |k|^5\, e^{-\frac{T}{2} k^4 +(2\sqrt{2}+1)\pi |k|^2+c\pi|k|},\\
		||\Theta_0||_{L^2(\rea)} &\leq C,
		\end{align*}
	where $C, c$ are two positive constants independent of $k.$
	\end{proposition}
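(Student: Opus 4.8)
The argument follows the complex--analytic construction of biorthogonal families from \cite{LR14}; the new feature here is that both families $\{\lambda_k^+\}$ and $\{\lambda_k^-\}$ are parabolic (their real parts tend to $-\infty$), which, as announced, removes any lower bound on $T$. I would begin by transferring the problem to the Fourier side: by the Paley--Wiener theorem, finding $\Theta\in L^2(-\tfrac T2,\tfrac T2)$ whose ``Laplace moments'' $\int_{-T/2}^{T/2}\Theta(t)\,e^{-\overline{\mu}\,t}\,dt$ are prescribed for $\mu$ in the index set $\{0\}\cup\{\lambda_k^{\pm}\}_{k\neq 0}$ amounts to producing an entire function $F$ of exponential type at most $T/2$ with $F|_{\mathbb{R}}\in L^2(\mathbb{R})$ and prescribed values at the points $z_\mu:=-i\,\overline{\mu}$; one then sets $\Theta=\mathcal F^{-1}F$, so that $\|\Theta\|_{L^2(\mathbb{R})}=\|F\|_{L^2(\mathbb{R})}$ up to a universal factor. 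By \eqref{sph}--\eqref{spp}, the interpolation nodes are $z_0=0$, $z_k^{+}=-k+ik^2+O(1)$ and $z_k^{-}=k^3+i(k^4-k^2)+O(k)$; they lie in the open upper half--plane, are pairwise distinct and nonzero by \Cref{rem}, and escape to infinity with $\operatorname{Im}z_k^{+}\asymp k^2$ and $\operatorname{Im}z_k^{-}\asymp k^4$.

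Since $\sum_{k\neq 0}\bigl(|\lambda_k^{+}|^{-1}+|\lambda_k^{-}|^{-1}\bigr)<\infty$, the Weierstrass products
\[
P_{+}(z)=\prod_{k\neq 0}\Bigl(1-\frac{z}{z_k^{+}}\Bigr),\qquad
P_{-}(z)=\prod_{k\neq 0}\Bigl(1-\frac{z}{z_k^{-}}\Bigr),\qquad
P_0(z)=z
\]
converge to entire functions of exponential type $0$ whose zero sets are exactly $\{z_k^{+}\}$, $\{z_k^{-}\}$ and $\{0\}$. To land inside the Paley--Wiener space of type $T/2$ while gaining decay on $\mathbb{R}$, I would multiply by a suitably normalized multiplier $M$ of exponential type $T/2$ whose modulus decays on $\mathbb{R}$ faster than any polynomial --- the classical ``weight'' construction, e.g.\ an infinite product of rescaled sine cardinals, as in \cite{FR74,LR14} --- and set, for $k\neq 0$,
\[
F_k^{+}(z)=\frac{P_0(z)\,P_{+}(z)\,P_{-}(z)\,M(z)}{\bigl(P_0P_{-}M\bigr)(z_k^{+})\;P_{+}'(z_k^{+})\;(z-z_k^{+})},
\]
with the symmetric formula for $F_k^{-}$, and $F_0(z)=P_{+}(z)P_{-}(z)M(z)\big/\bigl(P_{+}P_{-}M\bigr)(0)$. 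By construction $F_k^{\pm}$ equals $1$ at $z_k^{\pm}$ and vanishes at every other node, so $\Theta_k^{\pm}:=\mathcal F^{-1}F_k^{\pm}$ and $\Theta_0:=\mathcal F^{-1}F_0$ lie in $L^2(-\tfrac T2,\tfrac T2)$ and verify the required identities, the orthogonality to the opposite branch and to the constant being automatic. The decisive structural point is that the nodes sit high in the upper half--plane: because $M$ has type exactly $T/2$, the reciprocal of $M(z_k^{\pm})$ is of size $e^{-\frac T2\operatorname{Im}z_k^{\pm}}$ up to sub--exponential corrections, and this is precisely what generates the factors $e^{-\frac T2k^2}$ and $e^{-\frac T2k^4}$ in the claimed bounds --- and, both branches being parabolic, these are not obstructed by any smallness requirement on $T$, in contrast with \cite{LR14}.

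It then remains to estimate $\|F_k^{\pm}\|_{L^2(\mathbb{R})}$, which splits into four ingredients: (i) an upper bound for $\bigl\|P_0P_{+}P_{-}M/(z-z_k^{\pm})\bigr\|_{L^2(\mathbb{R})}$, where on the real axis the rapid decay of $M$ dominates the growth of the $P$'s and, moreover, $|z-z_k^{\pm}|\ge\operatorname{dist}(z_k^{\pm},\mathbb{R})\asymp\operatorname{Im}z_k^{\pm}$; (ii) a lower bound for the self--branch derivative $|P_{\pm}'(z_k^{\pm})|=|z_k^{\pm}|^{-1}\prod_{l\neq k}|1-z_k^{\pm}/z_l^{\pm}|$, i.e.\ a quantitative non--clustering statement for each branch, extracted from \eqref{sph}--\eqref{spp} --- the point being that to leading order this product reduces to the elementary evaluation $\prod_{l\neq k}|1-k^2/l^2|=\tfrac12$, so that only polynomial factors in $k$ survive; (iii) a lower bound for $|M(z_k^{\pm})|$ as above; and (iv) the cross--branch values $|P_{\mp}(z_k^{\pm})|$, controlled through the closed forms $\prod_{l\geq1}(1-w/l^2)=\sin(\pi\sqrt w)/(\pi\sqrt w)$ and the analogous identity for $\prod_{l\geq1}(1-w/l^4)$, evaluated near $w\asymp k^2$, resp.\ $w\asymp k^4$. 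Collecting (i)--(iv) produces the three displayed estimates, including the polynomial prefactors $|k|^2$, $|k|^5$ and the numerical constants, while $\|\Theta_0\|_{L^2}\le C$ follows from the same scheme applied to the trivial numerator $P_0$. I expect the genuine difficulty to be items (ii) and (iv): obtaining sharp lower bounds on $|P_{\pm}'(z_k^{\pm})|$ and on the cross terms $|P_{\mp}(z_k^{\pm})|$ using only the first two terms of the eigenvalue asymptotics, since it is there that the exact constants are pinned down; everything else is routine Paley--Wiener bookkeeping.
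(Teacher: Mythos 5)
Your overall framework coincides with the paper's: Paley--Wiener reduction to an interpolation problem for entire functions of exponential type $T/2$, a canonical product with zeros at the (rotated/conjugated) eigenvalues, a Beurling--Malliavin multiplier of the same type to regain $L^2(\mathbb{R})$ integrability, and Plancherel to transfer the $L^2$ bounds. You also correctly identify the two structural features the paper exploits: the exponential gain $e^{-\frac{T}{2}\operatorname{Im}z_k^{\pm}}$ comes from the multiplier's lower bound at the nodes, and the parabolic--parabolic nature of both branches is what removes any lower bound on $T$. Up to notation, your $F_k^{\pm}$ is the paper's $\Psi_k^{\pm}$.

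However, your items (ii) and (iv) --- which you yourself flag as "the genuine difficulty" --- are not merely hard; as sketched they are incorrect, and they miss the key technical device. For (ii): the nodes come in near-degenerate pairs $z_k^{+}$ and $z_{-k}^{+}$ reflected across the imaginary axis, both sitting near $ik^2$. Consequently the single factor $|1-z_k^{+}/z_{-k}^{+}|$ is already $O(1/|k|)$, not $O(1)$, and your proposed "elementary evaluation" $\prod_{l\neq k}|1-k^2/l^2|=\tfrac12$ is simply false for the doubly infinite product --- the true leading order is $\sim 1/|k|$, which is exactly where the $|k|^{-3}$ in \eqref{eq:2} originates. Plugging in only the leading asymptotics $z_l^{+}\approx il^2$ produces an exact zero in the $l=-k$ factor, so no naive "leading order" computation can give a lower bound; one must control how the $O(1)$ corrections lift that cancellation uniformly in $k$. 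For (iv): the closed form $\prod_{l\ge1}(1-w/l^4)$ evaluated near $w\asymp k^2$ contains a factor $\sin(\pi\sqrt{k})$ that vanishes whenever $k$ is a perfect square and can be arbitrarily small otherwise; the lower bound $|P_{-}(z_k^{+})|\gtrsim e^{2\pi|k|^{1/2}}$ in \eqref{P_2} holds only because the rescaled zeros $\widetilde{\mu}_l^{-}$ have imaginary part bounded away from zero (asymptotically $-1/4$), so the relevant points stay at uniform distance from the zero set. This is not visible in the closed-form identity.

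The device the paper uses to make both (ii) and (iv) rigorous --- and which your sketch omits --- is to pass to the \emph{sine-type} functions $Q_1$ and $R_1$ via the substitutions $P_1(z)=iQ_2(e^{-i\pi/4}\sqrt z)$ and $P_2(z)=iR_2(e^{-i\pi/8}\sqrt[4]{z})$, where $Q_1,R_1$ have zeros $\widetilde{\mu}_k^{\pm}=k+O(1)$; this is the content of \Cref{Rosier}. Sine-type theory then supplies simultaneously the uniform lower bound on $|Q_1'(\widetilde{\mu}_k^{+})|$, $|R_1'(\widetilde{\mu}_k^{-})|$ (replacing your item (ii)) and the two-sided bound $|Q_1(x+iy)|\asymp e^{\pi|y|}$, $|R_1(x+iy)|\asymp e^{\pi|y|}$ away from zeros (replacing your item (iv)), with the required zero-separation verified from \eqref{sph}--\eqref{spp}. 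Without this, or some equivalent quantitative non-degeneracy argument, the exponential lower bounds in \eqref{eq:2}--\eqref{eq:3} --- and hence the final $L^2$ estimates --- cannot be obtained.
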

	
	\section{Interior null controllability: The Method of Moment}\label{sect4}
	\noindent
	This section is devoted to the proof of \Cref{thm1} and \Cref{thm1a}.
	\subsection{Reduction to the moment problem} In this section, we deduce a system of identities, called moment problem, corresponding to systems \eqref{eq:cntrl1} and \eqref{eq:cntrl1a}. We also show that proving \Cref{thm1} and \Cref{thm1a} is equivalent to solving the moment problems corresponding to the considered control systems. 
	\subsubsection{\textbf{Interior bilinear distributed control acting in the KS-KdV equation}}\label{sect4.1.1}
		Let us first assume the initial data $(u_0, v_0)$ of \eqref{eq:cntrl1}, the terminal data $(\varphi_T, \psi_T)$ of \eqref{eq:adj}, $f$ and $g$  to be smooth enough.
		This eventually ensures the solution $(u,v)$ and $(\varphi,\psi)$ of systems \eqref{eq:cntrl1} and \eqref{eq:adj}, respectively to be smooth. We then take inner product of the first two equations of \eqref{eq:cntrl1} with $\varphi$ and $\psi$, respectively to get
		
		\begin{equation}\label{eq:ident}
		\ip{\begin{pmatrix}
			u(T,\cdot)\\ v(T, \cdot)
			\end{pmatrix}}{\begin{pmatrix}
			\varphi_T\\ \psi_T
			\end{pmatrix}}_{\mathbf{H^*},\mathbf{H}}-\ip{\begin{pmatrix}
			u_0\\ v_0
			\end{pmatrix}}{\begin{pmatrix}
			\varphi(0, \cdot)\\ \psi(0, \cdot)
			\end{pmatrix}}_{\mathbf{H^*},\mathbf{H}}
		=\int_{0}^{T}\intg f(x)g(t)\overline{\varphi(t,x)} \,dx dt.
		\end{equation}
		Using density argument, one can show the above identity to be true even if $f\in \lt, g \in L^2(0,T)$ and the initial data $(u_0, v_0)$, terminal data $(\varphi_T,\psi_T)$ lie in $\mathbf{H^*}$ and $\mathbf{H}$, respectively.
		
		\vspace{1 mm}
		\noindent
		Assume the system \eqref{eq:cntrl1} is null controllable, then \eqref{eq:ident} implies
		\begin{align}\label{int ident}
		-\ip{\begin{pmatrix}
			u_0\\ v_0
			\end{pmatrix}}{\begin{pmatrix}
			\varphi(0, \cdot)\\ \psi(0, \cdot)
			\end{pmatrix}}_{\mathbf{H^*},\mathbf{H}}
		=\int_{0}^{T}\intg f(x)g(t)\overline{\varphi(t,x)}\, dx dt.
		\end{align}
		\noindent
		Conversely, assume that \eqref{int ident} holds then \eqref{eq:ident} gives
		\begin{align*}
		\ip{\begin{pmatrix}
			u(T,\cdot)\\ v(T, \cdot)
			\end{pmatrix}}{\begin{pmatrix}
			\varphi_T\\ \psi_T
			\end{pmatrix}}_{\mathbf{H^*},\mathbf{H}}=0,
		\end{align*}
		where $
		(\varphi_T,\psi_T )
	 \in \mathbf{H}$ and hence we get
		\begin{align*}
		\begin{pmatrix}
		u(T,\cdot)\\
		v(T,\cdot)
		\end{pmatrix}=\begin{pmatrix}
		0\\0
		\end{pmatrix} \text{ in $\mathbf{H^*}$}.
		\end{align*}
		
		\noindent
		Proving  \Cref{thm1} is equivalent to finding $f\in L^2(0,2\pi)$, $g \in L^2(0,T)$ such that $g$ satisfies \eqref{int ident} with $(\varphi_T,\psi_T )$ varying over $\mathbf{H}$, which is equivalent to varying $(\varphi_T,\psi_T )$ over some basis of $\mathbf{H}$. Note that the set of eigenfunctions of $A^*$ forms basis of $\mathbf{H}$ and so we will consider these elements as terminal condition $(\varphi_T,\psi_T )$ and then substitute it in the identity \eqref{int ident} to get a system of identities, called moment problem, which will be solved in \Cref{sect4.2} to prove \Cref{thm1}.
		
		Let us first take the terminal data $(\varphi_T, \psi_T)$ as
		\begin{align*}	\begin{pmatrix}
		\varphi_T\\ \psi_T
		\end{pmatrix}= \begin{pmatrix}
		e^{ikx} \\\theta_k^{\pm}e^{ikx}\end{pmatrix}, \quad k \in \z\setminus \{0\}.
		\end{align*}
		Then the solution of the adjoint system \eqref{eq:adj} is of the form
		\begin{align*}
		\begin{pmatrix}
		\varphi(t,x)\\ \psi(t,x)
		\end{pmatrix}= e^{\lambda_k^\pm(T-t)}\begin{pmatrix}
		e^{ikx} \\\theta_k^{\pm}e^{ikx}\end{pmatrix}, \quad k \in \z\setminus \{0\}	
		\end{align*}
		On substituting it in the identity \eqref{int ident}, we get 
		\begin{align}\label{eq:moment1}
		\nonumber &e^{\overline{\lambda_k^\pm}\frac{T}{2}}	\left(\ip{u_0}{e^{ikx}}_{(H^{2}_{per})^*,H^2_{per}}+ \overline{\theta_k^\pm} \ip{v_0}{e^{ikx}}_{(H^{1}_{per})^*,H^1_{per}}\right) +f_k\int_{-\frac{T}{2}}^{\frac{T}{2}}e^{-\overline{\lambda_k^\pm}t} g\left(t+\frac{T}{2}\right)=0,\\
		&i.e.,\hspace{20 mm} f_k\int_{-\frac{T}{2}}^{\frac{T}{2}}e^{-\overline{\lambda_k^\pm}t}\, g\left(t+\frac{T}{2}\right)=-e^{\overline{\lambda_k^\pm}\frac{T}{2}}\gamma_k^\pm,
		\end{align}
		where, $f_k=\intg f(x)e^{-ikx}$ and $\gamma_k^\pm = \ip{u_0}{e^{ikx}}_{(H^{2}_{per})^*,H^2_{per}}+ \overline{\theta_k^\pm} \ip{v_0}{e^{ikx}}_{(H^{1}_{per})^*,H^1_{per}} \text{ for } k \in \z\setminus \{0\}$.
		
		\noindent
		Next we take the terminal data as \begin{align*}	\begin{pmatrix}
		\varphi_T\\ \psi_T
		\end{pmatrix}= \Phi_0=\begin{pmatrix}
		1\\0
		\end{pmatrix} .
		\end{align*}
		Then, identity \eqref{int ident} gives 
		\begin{align}\label{eq:moment2}
		f_0\int_{-\frac{T}{2}}^{\frac{T}{2}}g\left(t+\frac{T}{2}\right) dt=-\gamma_0,
		\end{align}
		where $f_0=\intg f(x)\,dx$ and $\gamma_0=\ip{u_0}{1}_{(H^{2}_{per})^*,H^2_{per}}$.
		
		Finally taking the terminal data as \begin{align*}	\begin{pmatrix}
		\varphi_T\\ \psi_T
		\end{pmatrix}= \hat{\Phi}_0=\begin{pmatrix}
		0\\1
		\end{pmatrix} ,
		\end{align*}
		the identity \eqref{int ident} gives 
		\begin{align*}
		\ip{v_0}{1}_{(H^{1}_{per})^*,H^1_{per}}=0.
		\end{align*} 
		Thus, we get the following lemma, which is equivalent to \Cref{thm1}:
		\begin{lemma}\label{lem4.1}
			The control system \eqref{eq:cntrl1} is null controllable if and only if for any initial data $
			(u_0,v_0)
			 \in \mathbf{H}^*$ with $\ip{v_0}{1}_{{(H^1_{per})}^*,H^1_{per}}=0$, there exist functions $f\in L^2\tor, g\in L^2(0,T)$ such that the following hold
			\begin{align}\label{intmoment1}
				\begin{cases}
					f_k\int_{-\frac{T}{2}}^{\frac{T}{2}}e^{-\overline{\lambda_k^\pm}t}g\left(t+\frac{T}{2}\right)=-e^{\overline{\lambda_k^\pm}\frac{T}{2}}\gamma_k^\pm, \vspace{0.2cm} \\
					f_0\int_{-\frac{T}{2}}^{\frac{T}{2}}g(t)dt=-\gamma_0,
				\end{cases}
			\end{align}
			where \begin{align}\label{constant}
			\nonumber	f_k=\intg f(x)e^{-ikx}\,dx, &\quad \gamma_k^\pm = \ip{u_0}{e^{ikx}}_{{(H^2_{per})}^*,H^2_{per}}+ \overline{\theta_k^\pm} \ip{v_0}{e^{ikx}}_{{(H^1_{per})}^*,H^1_{per}},\,  \forall k \in \z\setminus \{0\} ,\\
				& f_0=\intg{f(x) \,dx} \text{ and } \gamma_0=\ip{u_0}{1}_{{(H^2_{per})}^*,H^2_{per}}.
			\end{align}
		\end{lemma}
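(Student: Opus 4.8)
The statement repackages the duality relation between \eqref{eq:cntrl1} and its adjoint \eqref{eq:adj} as an exact characterization of null controllability, so the plan has three movements: make the basic duality identity rigorous, convert ``$(u(T,\cdot),v(T,\cdot))=0$'' into a family of scalar identities indexed by the eigenfunctions of $A^*$, and evaluate those identities.

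\emph{Step 1: the duality identity.} For smooth $f$, $g$, $(u_0,v_0)$ and $(\varphi_T,\psi_T)$, I would pair the first two equations of \eqref{eq:cntrl1} in $L^2((0,T)\times\tor)$ with the solution $(\varphi,\psi)$ of \eqref{eq:adj} and integrate by parts in $x$ over $\tor$ and in $t$ over $(0,T)$. Every spatial boundary contribution cancels because of the matching periodic conditions on $(u,v)$ and on $(\varphi,\psi)$, and the first-order coupling terms $v_x$, $u_x$ are absorbed by the off-diagonal entries $-\psi_x$, $-\varphi_x$ of $A^*$; what remains is exactly \eqref{eq:ident}. Using the regularity bound of \Cref{adj well-prop} (which gives $(\varphi,\psi)\in C([0,T];\mathbf H)\hookrightarrow L^2((0,T)\times\tor)^2$) and the well-posedness results of \Cref{sect2} for \eqref{eq:cntrl1} (which give $(u(T,\cdot),v(T,\cdot))\in\mathbf H^*$), I would pass to the limit along smooth approximations to extend \eqref{eq:ident} to all $f\in\lt$, $g\in L^2(0,T)$, $(u_0,v_0)\in\mathbf H^*$, $(\varphi_T,\psi_T)\in\mathbf H$. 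Consequently the control $h=1_\omega f(x)g(t)$ steers $(u_0,v_0)$ to rest if and only if \eqref{int ident} holds for every $(\varphi_T,\psi_T)\in\mathbf H$.

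\emph{Step 2: reduction to eigenfunctions.} Since $\ip{\cdot}{\cdot}_{\mathbf H^*,\mathbf H}$ is nondegenerate and $(u(T,\cdot),v(T,\cdot))\in\mathbf H^*$, it vanishes in $\mathbf H^*$ iff it pairs to zero with every member of a family whose finite linear combinations are dense in $\mathbf H$. For this family I would take the eigenfunctions of $A^*$ from \Cref{sect3}: $\Phi_0$, $\hat\Phi_0$ and $\{\Phi_k^{\pm}\}_{k\in\z\setminus\{0\}}$. For each $k\ne0$ one has $\lambda_k^+\ne\lambda_k^-$ (\Cref{rem}) and $\theta_k^+\ne\theta_k^-$ (their moduli grow like $|k|^3$ and $|k|^{-3}$ respectively, \Cref{rem1}), so $\Phi_k^{+},\Phi_k^{-}$ span the full two-dimensional space $\{(ae^{ikx},be^{ikx}):a,b\in\cplx\}$; together with $\Phi_0,\hat\Phi_0$, finite linear combinations of these functions give all vector trigonometric polynomials, which are dense in $\mathbf H=H^2_{per}\times H^1_{per}$. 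Hence the family is admissible and it is enough to test \eqref{int ident} against it.

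\emph{Step 3: evaluation.} For $(\varphi_T,\psi_T)=\Phi_k^{\pm}$ the solution of \eqref{eq:adj} is the separated function $(\varphi,\psi)(t,x)=e^{\lambda_k^{\pm}(T-t)}(e^{ikx},\theta_k^{\pm}e^{ikx})$, checked directly against \eqref{eq:adj}; substituting into \eqref{int ident}, using conjugate-linearity of the pairings in the second slot, changing variables $t\mapsto t+\tfrac T2$ and dividing by $e^{\overline{\lambda_k^{\pm}}T/2}$ yields the first line of \eqref{intmoment1} with $f_k$, $\gamma_k^{\pm}$ as in \eqref{constant}. For $(\varphi_T,\psi_T)=\Phi_0$ the adjoint solution is the constant $(1,0)$, giving $f_0\int_{-T/2}^{T/2}g(t+\tfrac T2)\,dt=-\gamma_0$, the second line of \eqref{intmoment1}. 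For $(\varphi_T,\psi_T)=\hat\Phi_0$ the adjoint solution is $(0,1)$, so $\varphi\equiv0$, the interior integral on the right of \eqref{int ident} disappears, and the identity reduces to $\ip{v_0}{1}_{(H^1_{per})^*,H^1_{per}}=0$ — a constraint on the data alone, which is precisely the hypothesis of \Cref{lem4.1}. Combining the three cases gives the claimed equivalence. The points I expect to require care are the density argument of Step~1 (which leans on the trace/energy estimates behind \Cref{adj well-prop} and on the transposition well-posedness, and needs attention to which spaces $f,g,u_0,v_0$ live in) and, in Step~2, the totality in $\mathbf H$ of the eigenfunctions of the non-self-adjoint operator $A^*$ — handled here via the explicit Fourier-block structure and the separation of $\theta_k^+$ from $\theta_k^-$, rather than via a general spectral theorem.
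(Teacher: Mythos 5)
Your proposal is correct and follows essentially the same route as the paper: the duality identity \eqref{eq:ident} by integration by parts plus density, reduction to the eigenvector test family of $A^*$, and the three evaluations for $\Phi_k^{\pm}$, $\Phi_0$, $\hat\Phi_0$ yielding \eqref{intmoment1} and the data constraint $\ip{v_0}{1}_{(H^1_{per})^*,H^1_{per}}=0$. The one place you go beyond the paper is supplying a justification for totality of the eigenfunctions in $\mathbf H$ via the Fourier-block decomposition — the paper asserts this without argument — though note that your appeal to the asymptotic moduli $|\theta_k^+|\sim|k|^3$, $|\theta_k^-|\sim|k|^{-3}$ only settles $\theta_k^+\neq\theta_k^-$ for large $|k|$; for the finitely many remaining $k\neq0$ one should check it directly from the explicit formula for $\theta_k^{\pm}$ (it reduces to $-ik^5-ik+k^2\neq0$, which holds for all integer $k\neq0$).
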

	\subsubsection{\textbf{Interior bilinear distributed control acting in the heat equation}}In this section, we consider the control system \eqref{eq:cntrl1a} with localized interior control on the heat component.
	Employing the argument of the last section, one can easily obtain the following analogous lemma, which is equivalent to \Cref{thm1a}:
	\begin{lemma}
		The control system \eqref{eq:cntrl1a} is null controllable if and only if for any initial data $(u_0,v_0)
	 \in \mathbf{H}^*$ with $\ip{u_0}{1}_{{(H^2_{per})}^*,H^2_{per}}=0$, there exist control functions $f\in L^2\tor, g\in L^2(0,T)$ solving the following moment problem:
		\begin{align}\label{intmoment2}
			\begin{cases}
				\overline{\theta_k^\pm}f_k\int_{-\frac{T}{2}}^{\frac{T}{2}}e^{-\overline{\lambda_k^\pm}t}g\left(t+\frac{T}{2}\right)=-e^{\overline{\lambda_k^\pm}\frac{T}{2}}\gamma_k^\pm, \vspace{0.2cm} \\
				f_0\int_{-\frac{T}{2}}^{\frac{T}{2}}g(t)dt=-\gamma_0,
			\end{cases}
		\end{align}
		where $f_k, \gamma_k^\pm, f_0 $ are same as \eqref{constant} and $ \gamma_0=\ip{v_0}{1}_{{(H_{per}^1)}^*,H_{per}^1}.$
	\end{lemma}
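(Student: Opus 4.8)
The plan is to run the transposition-and-moment reduction of \Cref{sect4.1.1} essentially word for word; the only structural change is that the control $h=1_\omega f(x)g(t)$ now enters the heat equation, so it is the second component $\psi$ of the adjoint state — not $\varphi$ — that gets tested against $h$. I would first take $(u_0,v_0)$, $(\varphi_T,\psi_T)$, $f$, $g$ smooth, pair the two equations of \eqref{eq:cntrl1a} in $L^2\tor$ with the classical solution $(\varphi,\psi)$ of the adjoint system \eqref{eq:adj}, and integrate by parts in $x$ and $t$; the periodic conditions make every spatial boundary term cancel and $(\varphi,\psi)$ solves \eqref{eq:adj}, so one is left with the fundamental identity
\begin{equation*}
\ip{\begin{pmatrix} u(T,\cdot)\\ v(T,\cdot)\end{pmatrix}}{\begin{pmatrix}\varphi_T\\\psi_T\end{pmatrix}}_{\mathbf{H}^*,\mathbf{H}}-\ip{\begin{pmatrix} u_0\\ v_0\end{pmatrix}}{\begin{pmatrix}\varphi(0,\cdot)\\\psi(0,\cdot)\end{pmatrix}}_{\mathbf{H}^*,\mathbf{H}}=\int_0^T\intg f(x)g(t)\,\overline{\psi(t,x)}\,dx\,dt .
\end{equation*}
A density argument — approximating $f\in L^2\tor$, $g\in L^2(0,T)$, $(u_0,v_0)\in\mathbf{H}^*$, $(\varphi_T,\psi_T)\in\mathbf{H}$ by smooth data and passing to the limit with the well-posedness bounds of \Cref{adj well-prop} and \Cref{well-prop3} — extends the identity to the stated regularity class.

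As in \Cref{sect4.1.1}, \eqref{eq:cntrl1a} is null controllable if and only if the reduced identity
\begin{equation*}
-\ip{\begin{pmatrix} u_0\\ v_0\end{pmatrix}}{\begin{pmatrix}\varphi(0,\cdot)\\\psi(0,\cdot)\end{pmatrix}}_{\mathbf{H}^*,\mathbf{H}}=\int_0^T\intg f(x)g(t)\,\overline{\psi(t,x)}\,dx\,dt
\end{equation*}
holds for every $(\varphi_T,\psi_T)\in\mathbf{H}$: the forward direction is immediate from the fundamental identity, and for the converse one notes that the right side is independent of $(\varphi_T,\psi_T)$, so the first term of the fundamental identity must vanish for all $(\varphi_T,\psi_T)\in\mathbf{H}$, which forces $(u(T,\cdot),v(T,\cdot))=0$ in $\mathbf{H}^*$. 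Since the eigenfunctions of $A^*$ — i.e. $\Phi_0$, $\hat\Phi_0$ and $\Phi_k^\pm$, $k\in\z\setminus\{0\}$ — form a (Riesz) basis of $\mathbf{H}$ (cf. \Cref{sect3}, using \Cref{rem}, \Cref{rem1} and the asymptotics \eqref{sph}--\eqref{spp}), testing the reduced identity against all of $\mathbf{H}$ is the same as testing it against this family.

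It remains to evaluate the reduced identity on the eigenfunctions. For $(\varphi_T,\psi_T)=\Phi_k^\pm=\begin{pmatrix}e^{ikx}\\\theta_k^\pm e^{ikx}\end{pmatrix}$ the adjoint solution is $e^{\lambda_k^\pm(T-t)}\Phi_k^\pm$, hence $\varphi(0,\cdot)=e^{\lambda_k^\pm T}e^{ikx}$ and $\overline{\psi(t,x)}=\overline{\theta_k^\pm}\,e^{\overline{\lambda_k^\pm}(T-t)}e^{-ikx}$; doing the $x$-integration (which produces $f_k=\intg f(x)e^{-ikx}\,dx$), substituting $t\mapsto t+\tfrac{T}{2}$ and dividing by $e^{\overline{\lambda_k^\pm}T/2}$ — exactly as in the passage leading to \eqref{eq:moment1} — yields the first line of \eqref{intmoment2}, now carrying the extra factor $\overline{\theta_k^\pm}$ in front of $f_k$. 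For $(\varphi_T,\psi_T)=\hat\Phi_0=\begin{pmatrix}0\\1\end{pmatrix}$, with $\lambda_0=0$ the adjoint solution is the constant $\hat\Phi_0$, so $\psi\equiv1$ and the reduced identity collapses to $f_0\int_{-T/2}^{T/2}g\!\left(t+\tfrac{T}{2}\right)dt=-\ip{v_0}{1}_{{(H^1_{per})}^*,H^1_{per}}=-\gamma_0$, the second line of \eqref{intmoment2}. For $(\varphi_T,\psi_T)=\Phi_0=\begin{pmatrix}1\\0\end{pmatrix}$ the adjoint solution is the constant $\Phi_0$, so $\psi\equiv0$, the right side vanishes, and the identity forces the compatibility condition $\ip{u_0}{1}_{{(H^2_{per})}^*,H^2_{per}}=0$, which is precisely the hypothesis of the lemma (and of \Cref{thm1a}). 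Assembling these three cases gives the claimed equivalence.

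I do not expect a genuine obstacle inside this lemma — it is the same bookkeeping as \Cref{lem4.1}, the one new feature being the weight $\overline{\theta_k^\pm}$ multiplying $f_k$, whose size is governed by \Cref{rem1} ($\abs{\theta_k^+}<C|k|^3$, $\abs{\theta_k^-}<c|k|^{-3}$) and which only becomes a real issue when \eqref{intmoment2} is actually solved in \Cref{sect4.2} via the biorthogonal family of \Cref{prop1}. The two points that call for some care are: (i) carrying out the density/limiting step at the level of the negative-order spaces $\mathbf{H}^*$, which relies on \Cref{adj well-prop} and \Cref{prop2.2}; and (ii) the completeness of the $A^*$-eigenfunctions in $\mathbf{H}$, so that testing on them is equivalent to testing on all of $\mathbf{H}$ — both already established in \Cref{sect2} and \Cref{sect3}.
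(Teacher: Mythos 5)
Your proposal is correct and follows exactly the route the paper intends: it is the transposition identity of \Cref{sect4.1.1} with the single change that the control $1_\omega f(x)g(t)$ now sits in the $v$-equation, so it is paired against $\psi$ rather than $\varphi$, producing the extra factor $\overline{\theta_k^\pm}$ in the first line of \eqref{intmoment2}, while the roles of $\Phi_0$ and $\hat\Phi_0$ swap to yield the compatibility condition $\ip{u_0}{1}_{(H^2_{per})^*,H^2_{per}}=0$ and the constant moment $\gamma_0=\ip{v_0}{1}_{(H^1_{per})^*,H^1_{per}}$. This is precisely what the paper means by ``employing the argument of the last section,'' and the bookkeeping in your proposal is accurate.
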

\subsection{Proof of \Cref{thm1}}\label{sect4.2}
		\begin{proof}
			As argued in \Cref{sect4.1.1}, proving this theorem is equivalent to solving the moment problem \eqref{intmoment1}.
			Let us define $g$ formally as  
			\begin{equation}\label{g}
			g(t)=-\sum_{k\in \z\setminus\{0\}}e^{\overline{\lambda_k^+}\frac{T}{2}}f_k^{-1}\gamma_k^+\,\Theta_k^+\left(t-\frac{T}{2}\right)-\sum_{k\in \z\setminus\{0\}}e^{\overline{\lambda_k^-}\frac{T}{2}}f_k^{-1}\gamma_k^-\,\Theta_k^-\left(t-\frac{T}{2}\right)-f_0^{-1}\gamma_0 \, \Theta_0\left(t-\frac{T}{2}\right).
			\end{equation}
			
			\noindent
			For the above definition to be sensible, one needs to make sure that $f_k \neq 0, \, \forall\, k \in \zahl$. Existence of such $f$ can be ensured from the lemma given below.
			
			\begin{lemma}
				Let $\omega$ be any nonempty subset of $(0,2\pi)$ and let $\alpha \in \omega$ and $\rho \in (0,1)$ be a quadratic irrational (irrational number which is a root of quadratic equation with integral coefficients) such that $[\alpha,\alpha+\rho\pi]$ is subset of $\omega$. Define
				\begin{align*}
				f(x)=\chi_{[\alpha,\alpha+\rho\pi]}(x),\quad \forall x \in (0,2\pi).
				\end{align*}
				Then, $f \in L^2(0,2\pi)$ with support inside $\omega$ and $f_k=\intg f(x) e^{-ikx}\,dx \neq 0$, for $k\in\z$.
				Moreover, there exist $\widetilde{C}>0$ such that $|f_k|\geq\frac{C}{|k|^2}$, for all $ k \in\z\setminus\{0\}$.
			\end{lemma}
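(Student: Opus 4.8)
The plan is to read off all three assertions --- that $f\in L^2\tor$ with support in $\omega$, that $f_k\neq 0$ for every $k\in\z$, and the quantitative bound $\abs{f_k}\ge\widetilde C\,\abs{k}^{-2}$ --- from an explicit evaluation of the Fourier coefficients, combined with the Diophantine properties of quadratic irrationals. First I would remark that such $\alpha,\rho$ always exist: $\omega$ being open and nonempty it contains an interval, and the quadratic irrationals are dense in $\rea$, so one may fit $[\alpha,\alpha+\rho\pi]$ inside $\omega$ with $\rho\in(0,1)$ quadratic irrational. Since $f=\chi_{[\alpha,\alpha+\rho\pi]}$ is bounded and supported in $[\alpha,\alpha+\rho\pi]\subset\omega$, the membership $f\in L^2\tor$ and the support property are immediate. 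A direct integration gives $f_0=\rho\pi\neq 0$ and, for $k\in\z\setminus\{0\}$,
\[
f_k=\int_\alpha^{\alpha+\rho\pi}e^{-ikx}\,dx=e^{-ik\alpha}\,\frac{1-e^{-ik\rho\pi}}{ik},\qquad\text{hence}\qquad \abs{f_k}=\frac{2\,\abs{\sin(\pi k\rho/2)}}{\abs{k}}.
\]
Because $\rho$ is irrational, $k\rho/2\notin\z$ for every $k\in\z\setminus\{0\}$, so $\sin(\pi k\rho/2)\neq 0$ and $f_k\neq 0$ for all $k\in\z$.

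For the lower bound I would use the elementary inequality $\abs{\sin(\pi\theta)}\ge 2\,\dist(\theta,\z)$, valid for all $\theta\in\rea$ (it follows from the concavity of $\sin$ on $[0,\pi]$ together with $2\pi$-periodicity). Applied with $\theta=k\rho/2$ it yields $\abs{f_k}\ge 4\,\dist(k\rho/2,\z)\,\abs{k}^{-1}$, so everything reduces to bounding $\dist(k\rho/2,\z)$ from below by $c_0/\abs{k}$. This is exactly where the hypothesis is used: $\rho/2$ is again a quadratic irrational (if $a\rho^2+b\rho+c=0$ with $a,b,c\in\z$, $a\neq 0$, then $4a(\rho/2)^2+2b(\rho/2)+c=0$), and by Lagrange's theorem its continued fraction expansion is eventually periodic, hence has bounded partial quotients; equivalently $\rho/2$ is badly approximable, i.e. there is $c_0>0$ with $\dist(k\rho/2,\z)\ge c_0/\abs{k}$ for all $k\in\z\setminus\{0\}$. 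Combining the three steps gives $\abs{f_k}\ge 4c_0\,\abs{k}^{-2}$ for all $k\neq 0$, so the claim holds with $\widetilde C=4c_0$.

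The computation is entirely routine; the only substantive ingredient is the classical fact that quadratic irrationals are badly approximable, and the real content of the lemma is to isolate why the hypothesis ``$\rho$ a quadratic irrational'' is imposed. Without a polynomial lower bound on $\abs{f_k}$, the coefficients could decay faster than $\abs{k}^{-2}$ along a subsequence, which would prevent the series \eqref{g} defining the control $g$ from converging in $L^2(0,T)$; with this bound the factors $f_k^{-1}$ grow only polynomially, and combine with the exponential estimates for $\{\Theta_k^{\pm}\}$ from \Cref{prop1} to make \eqref{g} well defined --- which is precisely the use of this lemma in the proof of \Cref{thm1}.
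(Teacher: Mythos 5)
Your proof is correct and follows essentially the same route as the paper: both compute $\abs{f_k}=2\abs{\sin(\pi k\rho/2)}/\abs{k}$ and then invoke the classical Diophantine bound for quadratic irrationals coming from Lagrange's continued-fraction theorem (the paper cites it as Theorem 188 of Hardy--Wright). The only cosmetic difference is that you package the sine lower bound as $\abs{\sin(\pi\theta)}\ge 2\,\dist(\theta,\z)$ and apply bad approximability to $\rho/2$, whereas the paper uses $\sin^2 x\ge 4x^2/\pi^2$ on $[-\pi/2,\pi/2]$ with a two-case reduction and applies the bound $\abs{\rho-p/q}\ge C/q^2$ directly to $\rho$; these are the same estimate.
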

			\begin{proof}
				
				Clearly, $f \in L^2(0,2\pi)$ with support inside $\omega$ and also  
				\begin{align*}
				&\quad\quad\quad\quad\quad\quad\quad f_0=\rho \pi \neq 0\\
				&f_k=\intg f(x)e^{-ikx} = \frac{e^{-ik\alpha}}{ik}(1-e^{-ik\rho \pi}) \neq 0,\,\, \forall\, k\in\z\setminus\{0\}
				\end{align*}
				Now, as $\rho$ is quadratic irrational so it can be approximated by rational numbers to order 2 and to no higher order (\cite{bell1939gh}, Theorem 188), i.e., there exist $C>0$ such that for any integers $p$ and $q$, $q \neq 0$,
				\begin{align}\label{quadirr}
				\left|\rho-\frac{p}{q}\right|\geq \frac{C}{q^2}. 
				\end{align}
				\noindent
				Also, for $k \in \z\setminus\{0\}$
				\begin{align}\label{fk}
			\nonumber	|f_k|=\frac{1}{|k|}|1-e^{-ik\rho\pi}| &= \frac{1}{|k|}\left|\,2\, \sin^2\left(\frac{k\rho\pi}{2}\right)+i\,2\, \sin\left(\frac{k\rho\pi}{2}\right)\, \cos\left(\frac{k\rho\pi}{2}\right)\right|\\
				& =2\frac{\left|\sin\left(\frac{k\rho\pi}{2}\right)\right|}{|k|}.
				\end{align}
				Note that,
				\begin{align}\label{sin}
					\sin^2x\geq \frac{4x^2}{\pi^2} \text{, for }x\in \left[-\frac{\pi}{2},\frac{\pi}{2}\right].
				\end{align}
				For any fixed $k\in\z\setminus\{0\}$, choose $p \in \z$ such that $0\leq \frac{k\rho\pi}{2} - p\pi\leq\pi$. 
				\begin{enumerate}
					\item[Case I. ] If $0\leq\frac{k\rho\pi}{2} - p\pi\leq\frac{\pi}{2}$,  then  we have
					\begin{align*}
						\sin^2\left(\frac{k\rho\pi}{2}\right)=\sin^2\left(\frac{k\rho\pi}{2} - p\pi\right)&\geq k^2\left(\rho-\frac{2p}{k}\right)^2, \text{ by \eqref{sin}} \\
						&\geq k^2 \frac{C}{k^4} =  \frac{C}{k^2}\text{ by \eqref{quadirr}}.
					\end{align*}
					\item[Case II. ] If $\frac{\pi}{2}\leq\frac{k\rho\pi}{2} - p\pi\leq\pi$, i.e., $-\frac{\pi}{2}\leq\frac{k\rho\pi}{2} - (p+1)\pi\leq0$, then we have
						\begin{align*}
						\sin^2\left(\frac{k\rho\pi}{2}\right)=\sin^2\left(\frac{k\rho\pi}{2} - (p+1)\pi\right)\geq  \frac{C}{k^2}, \text{ by last case}.
					\end{align*}
				\end{enumerate}
				Combining the above two cases, we have
				\begin{align}\label{fkone}
				\sin^2\left(\frac{\pi}{2}k\rho\right)\geq \frac{C^2}{k^2},\,\, \forall\, k\in\z\setminus\{0\}.
				\end{align}
			 Plugging the estimate \eqref{fkone} in \eqref{fk}, we obtain $\left|f_k\right| \geq \frac{\widetilde{C}}{|k|^2}$ for some $\widetilde{C}>0$, $k \in \z \setminus\{0\}.$
				\end{proof}
			Thus, the above lemma justifies the expression \eqref{g} of $g$ and also from proposition \eqref{prop1}, one can easily conclude that $g$ solves the moment problem \eqref{intmoment1} with the restriction $\ip {v_0}{1}_{{(H^2_{per})}^*, H^2_{per}}=0$.
			
			\noindent
			 Now, we only need to show that $g \in L^2(0,T)$.
			\begin{align*}
			&||g||_{L^2(0,T)}\\&\hspace{3mm}\leq \sum_{k\in \z\setminus\{0\}}\left|e^{\overline{\lambda_k^+}\frac{T}{2}}\right|\,|f_k|^{-1}\,|\gamma_k^+|\,||\Theta^+_k||_{L^2(-\frac{T}{2},\frac{T}{2})}+\sum_{k\in \z\setminus\{0\}}\left|e^{\overline{\lambda_k^-}\frac{T}{2}}\right|\,|f_k|^{-1}\,|\gamma_k^-|\,||\Theta^-_k||_{L^2(-\frac{T}{2},\frac{T}{2})}\\ & \hspace{80 mm}+|f_0|^{-1}\,|\gamma_0|\,||\Theta_0||_{L^2(-\frac{T}{2},\frac{T}{2})}\\
			&\hspace{3mm}\leq C\,\left( \sum_{k\in \z\setminus\{0\}}e^{-|k|^2\frac{T}{2}}|f_k|^{-1}\,\left(||u_0||_{{(H^{2}_{per})}^*}||e^{ikx}||_{H^2_{per}}+\left|\overline{\theta_k^+}\right| ||v_0||_{{(H^{1}_{per})}^*}||e^{ikx}||_{H^{1}_{per}}\right)||\Theta^+_k||_{L^2(-\frac{T}{2},\frac{T}{2})}\right.\\
			&\hspace{4 mm} \left. +\sum_{k\in \z\setminus\{0\}}e^{(-k^4+k^2)\frac{T}{2}}|f_k|^{-1}\,\left(||u_0||_{{(H^{2}_{per})}^*}||e^{ikx}||_{H^2_{per}}+\left|\overline{\theta_k^-}\right| ||v_0||_{{(H^{1}_{per})}^*}||e^{ikx}||_{H_{per}^{1}}\right)||\Theta^+_k||_{L^2(-\frac{T}{2},\frac{T}{2})}\right)\\
			%& \hspace{5mm}\leq C\,\left( \sum_{k\in \z\setminus\{0\}}|f_k|^{-1}\,\left(||u_0||_{{(H^{2}_{per})}^*}||e^{ikx}||_{H^2_{per}}+\left|k\right|^3 ||v_0||_{{(H^{1}_{per})}^*}||e^{ikx}||_{H^{1}_{per}}\right)\,|k|^2\,e^{-T |k|^2 -2\pi |k|^{\frac{1}{2}}+3(3+2\sqrt{2})\pi|k|}\right.\\
			%&\quad \left. +\sum_{k\in \z\setminus\{0\}}|f_k|^{-1}\,\left(||u_0||_{{(H^{2}_{per})}^*}||e^{ikx}||_{H_{per}^2}+\left|k\right|^{-3} ||v_0||_{{(H^{1}_{per})}^*}||e^{ikx}||_{H_{per}^{1}}\right)\,|k|^5\, e^{-T |k|^4 +(2\sqrt{2}+1)\pi |k|^2+\frac{T}{2}|k|^2+(c-3)\pi|k|}\right)\\
			&\hspace{3mm}\leq C\,\left( \sum_{k\in \z\setminus\{0\}}|k|^2\,\left(||u_0||_{(H^{2}_{per})^*}+||v_0||_{{(H^{1}_{per})}^*}\right)\,|k|^4|k|^2\,e^{-T |k|^2 -2\pi |k|^{\frac{1}{2}}+3(3+2\sqrt{2})\pi|k|}\right.\\
			&\hspace{20 mm} \left. +\sum_{k\in \z\setminus\{0\}}|k|^2\,\left(||u_0||_{{(H^{2}_{per})}^*}+||v_0||_{{(H^{1}_{per})}^*}\right)|k|^2\,|k|^5\, e^{-T |k|^4 +(2\sqrt{2}+1)\pi |k|^2+\frac{T}{2}|k|^2+c\pi|k|}\right)\\
			&\hspace{3mm}\leq C\left(\sum_{k\in \z\setminus\{0\}}|k|^8e^{-T |k|^2 -2\pi |k|^{\frac{1}{2}}+3(3+2\sqrt{2})\pi|k|}+\sum_{k\in \z\setminus\{0\}}|k|^{9}e^{-T |k|^4 +(2\sqrt{2}+1)\pi |k|^2+\frac{T}{2}|k|^2+c\pi|k|}\right)\\
			&\hspace{3mm}< \infty.
			\end{align*}
		\end{proof}
	\subsection{Proof of \Cref{thm1a}}
		As mentioned before, proving \Cref{thm1a} is equivalent to solving the moment problem \eqref{intmoment2}.
		Recall that $\theta_k^\pm\neq 0$(see \cref{rem1}), and so define $g$ as:
		\begin{align*}
			\nonumber g(t)=-\sum_{k\in \z\setminus\{0\}}e^{\overline{\lambda_k^+}\frac{T}{2}}(\theta_k^+)^{-1}f_k^{-1}\gamma_k^+\,\Theta_k^+\left(t-\frac{T}{2}\right)-&\sum_{k\in \z\setminus\{0\}}e^{\overline{\lambda_k^-}\frac{T}{2}}(\theta_k^-)^{-1}f_k^{-1}\gamma_k^-\,\Theta_k^-\left(t-\frac{T}{2}\right)\\&\quad\quad\quad\quad\quad\quad-f_0^{-1}\gamma_0 \, \Theta_0\left(t-\frac{T}{2}\right).
		\end{align*}
		Following the above proof of \Cref{thm1}, one can easily conclude that this $g\in L^2(0,T)$ solves the moment problem and hence \Cref{thm1a} follows.
	\section{Boundary null controllability: The Method of Moment}\label{sect5}
	\noindent
	In this section, we give the proofs of \Cref{thm2} and \Cref{thm2a}. We follow the same steps as in the case of interior controllability, i.e., we first derive the moment problem corresponding to each theorem and then solve it.
	\subsection{Reduction to moment problem} This section is devoted to derive the moment problem corresponding to the control systems \eqref{eq:cntrl2} and \eqref{eq:cntrl2a}. 
	\subsubsection{\textbf{Control acting on the KS-KdV component}}\label{sect5.1.1}
	 \noindent
	 Let us first assume that, the control $q$, initial data $(u_0, v_0)$ of \eqref{eq:cntrl2} and the terminal data $(\varphi_T, \psi_T )$ of \eqref{eq:adj} are smooth enough. 
	 Let us take the duality product $\ip{\cdot}{\cdot}_{{{\mathbf{H^*},\mathbf{H}}}}$ of the first two equations of \eqref{eq:cntrl2} with $\varphi$ and $\psi$, respectively and then perform integration by parts successively to get
	 \begin{align*}
	 \nonumber&\ip{\begin{pmatrix}
	 	u(T,\cdot)\\ v(T, \cdot)
	 	\end{pmatrix}}{\begin{pmatrix}
	 	\varphi_T\\ \psi_T
	 	\end{pmatrix}}_{{\mathbf{H^*},\mathbf{H}}}-\ip{\begin{pmatrix}
	 	u_0\\ v_0
	 	\end{pmatrix}}{\begin{pmatrix}
	 	\varphi(0, \cdot)\\ \psi(0, \cdot)
	 	\end{pmatrix}}_{\mathbf{H^*},\mathbf{H}}\\
	 	&\hspace{15 mm}=\int_{0}^{T}\left(\overline{\varphi_{xxx}(t, 2\pi)}-\overline{\varphi_{xx}(t,2\pi)}+\overline{\varphi_{x}(t,2\pi)}+\overline{\psi(t,2\pi)}\right)q(t) dt.
	 \end{align*}
	 By the density argument, this identity holds even if we take the initial data $
	 (u_0,v_0)$ and terminal data $(
	 \varphi_T, \psi_T )$ in $\mathbf{H^*}$ and $\mathbf{H}$, respectively.

	 As done in \Cref{sect4.1.1}, we get the following equivalent identity for null controllability of the control system \eqref{eq:cntrl2}: \begin{align}\label{eq:identity1}
	 \ip{\begin{pmatrix}
	 	u_0\\ v_0
	 	\end{pmatrix}}{\begin{pmatrix}
	 	\varphi(0, \cdot)\\ \psi(0, \cdot)
	 	\end{pmatrix}}_{\mathbf{H^*},\mathbf{H}}
	 +\int_{0}^{T}\left(\overline{\varphi_{xxx}(t, 2\pi)}-\overline{\varphi_{xx}(t,2\pi)}+\overline{\varphi_{x}(t,2\pi)}+\overline{\psi(t,2\pi)}\right)q(t) dt=0.
	 \end{align}
	 Now, we deduce the moment problem by varying the terminal data $(\varphi_T,\psi_T)$ over the set of eigenvectors of $A^*$, which forms a  basis of $\mathbf{H}$. Let us first take the terminal data $(\varphi_T, \psi_T)$ as
	 \begin{align*}	\begin{pmatrix}
	 \varphi_T\\ \psi_T
	 \end{pmatrix}= \begin{pmatrix}
	 e^{ikx} \\\theta_k^{\pm}e^{ikx}\end{pmatrix}, \quad k \in \z\setminus \{0\}.
	 \end{align*}
	 then the solution of the adjoint system \eqref{eq:adj} is given by
	 \begin{align*}
	 \begin{pmatrix}
	 \varphi(t,x)\\ \psi(t,x)
	 \end{pmatrix}= e^{\lambda_k^\pm(T-t)}\begin{pmatrix}
	 e^{ikx} \\\theta_k^{\pm}e^{ikx}\end{pmatrix}, \quad k \in \z\setminus \{0\},	
	 \end{align*}
	 and so we have:
	 \begin{align}\label{eq:phizero1}
	 \begin{pmatrix}
	 \varphi(0,x)\\ \psi(0,x)
	 \end{pmatrix}= e^{\lambda_k^\pm T}\begin{pmatrix}
	 e^{ikx} \\\theta_k^{\pm}e^{ikx}\end{pmatrix}, \quad k \in \z\setminus \{0\}.	
	 \end{align}
	 \begin{align}
	 \label{eq:bstarphi1}&\varphi_{xxx}(t,2\pi)=-ik^3e^{\lambda_k^\pm(T-t)}, \quad \varphi_{xx}(t,2\pi)=-k^2e^{\lambda_k^\pm(T-t)},\\
	 &\varphi_{x}(t,2\pi)=ike^{\lambda_k^\pm(T-t)}, \quad \psi(t,2\pi)=\theta_k^\pm e^{\lambda_k^\pm(T-t)}\label{eq:bstarphi2}.
	 \end{align}
	 Using \eqref{eq:phizero1}, \eqref{eq:bstarphi1}, \eqref{eq:bstarphi2} in \eqref{eq:identity1}, we get
	 \begin{align*}
	 e^{\overline{\lambda_k^\pm} T}\left(\ip{u_0}{e^{ikx}}_{(H_{per}^{2})^*,H_{per}^2}+ \overline{\theta_k^\pm} \ip{v_0}{e^{ikx}}_{(H_{per}^{1})^*,H_{per}^1}\right)+\left(ik^3+k^2-ik+\overline{\theta_k^\pm}\right)\int_{0}^{T}e^{\overline{\lambda_k^\pm}(T-t)} q(t)=0,
	 \end{align*}
	 which on simplification gives
	 \begin{align*}
	 &e^{\overline{\lambda_k^\pm}T}	\left(\ip{u_0}{e^{ikx}}_{(H_{per}^{2})^*,H_{per}^2}+ \overline{\theta_k^\pm} \ip{v_0}{e^{ikx}}_{(H_{per}^{1})^*,H_{per}^1}\right)\\&\hspace{30mm}+\left(ik^3+k^2-ik+\overline{\theta_k^\pm}\right)e^{\overline{\lambda_k^\pm}\frac{T}{2}}\int_{-\frac{T}{2}}^{\frac{T}{2}}e^{-\overline{\lambda_k^\pm}t} q\left(t+\frac{T}{2}\right)=0.\\
	 &i.e.,\hspace{10 mm}\quad \int_{-\frac{T}{2}}^{\frac{T}{2}}e^{-\overline{\lambda_k^\pm}t} q\left(t+\frac{T}{2}\right)=-e^{\overline{\lambda_k^\pm}\frac{T}{2}}\gamma_k^\pm,
	 \end{align*}
	 where \begin{equation*}\label{eq:gamma1}\gamma_k^\pm=\frac{1}{\left(ik^3+k^2-ik+\overline{\theta_k^\pm}\,\right)} \left(\ip{u_0}{e^{ikx}}_{(H_{per}^{2})^*,H_{per}^2}+ \overline{\theta_k^\pm} \ip{v_0}{e^{ikx}}_{(H_{per}^{1})^*,H_{per}^1}\right), \, k\in \zahl\setminus\{0\}\end{equation*} 
	 is well defined as from \Cref{fig4}, \Cref{fig5} given below, it is clear that $(ik^3+k^2-ik+\overline{\theta_k^\pm}\,)\neq 0$.
	 \begin{figure}[H] 
	 \centering 
	 \begin{subfigure}[b]{0.47\textwidth}
	 	\includegraphics[width=1\textwidth]{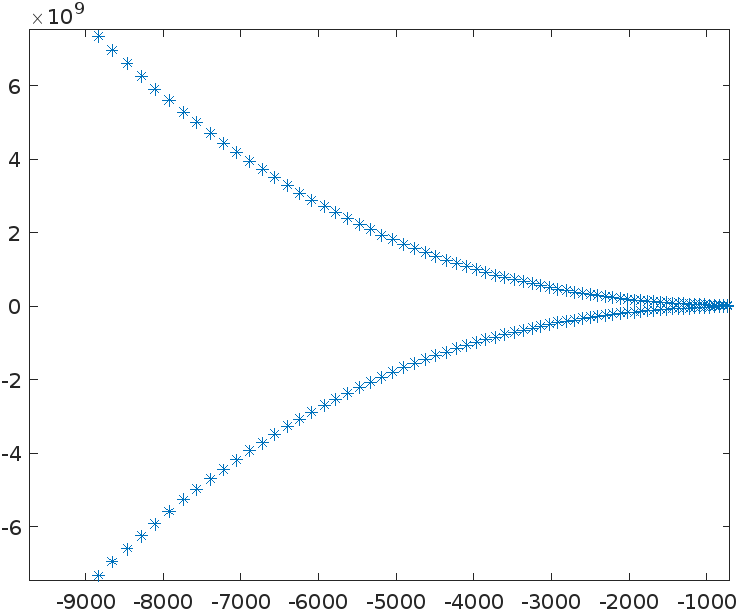}
	 	\caption{Assymptotic behaviour}
	 \end{subfigure}
	 \begin{subfigure}[b]{0.5\textwidth}
	 	\includegraphics[width=1\textwidth]{BD11(zoom).png}
	 	\caption{for $k\in[-5,5]\cap\z\setminus\{0\}$}
	 \end{subfigure}
 	\caption{Plot of $\lambda_k^+(-ik^3+k^2+ik)+\eta_k^+$ to show $(ik^3+k^2-ik+\overline{\theta_k^+})\neq0$}
 	\label{fig4}
	\end{figure}
	\begin{figure}[H]
		\centering
		\begin{subfigure}[b]{0.45\textwidth}
			\includegraphics[width=1\textwidth]{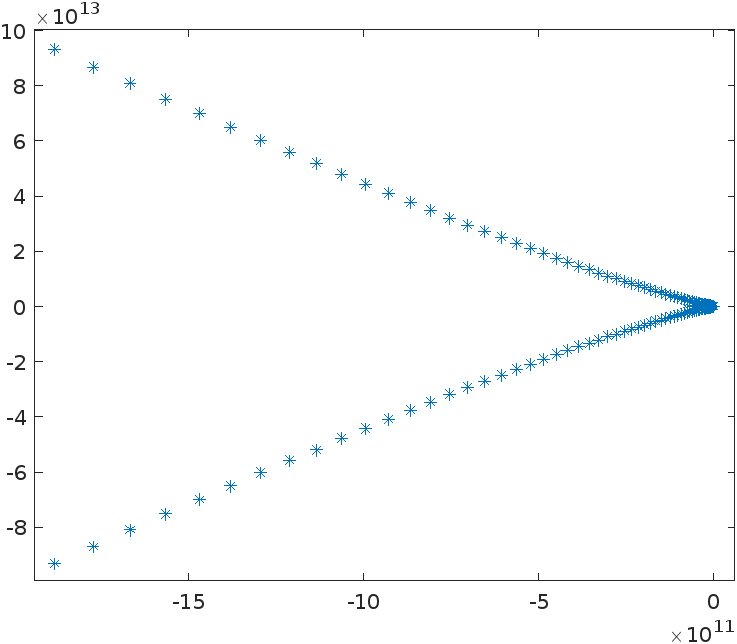}
			\caption{Assymptotic behaviour}
		\end{subfigure}
		\begin{subfigure}[b]{0.49\textwidth}
			\includegraphics[width=1\textwidth]{BD12(zoom).png}
			\caption{for $k\in[-2,2]\cap\z\setminus\{0\}$}
		\end{subfigure}
		\caption{Plot of $\lambda_k^-(-ik^3+k^2+ik)+\eta_k^-$ to show $(ik^3+k^2-ik+\overline{\theta_k^-})\neq0$}
		\label{fig5}.
	\end{figure}
Next we take the terminal data as \begin{align*}	\begin{pmatrix}
	 \varphi_T\\ \psi_T
	 \end{pmatrix}= \Phi_0=\begin{pmatrix}
	 1\\0
	 \end{pmatrix}.
	 \end{align*}
	 Then the identity \eqref{eq:identity1} gives \begin{align*}\ip{u_0}{1}_{(H_{per}^{2})^*,H_{per}^2}=0.
	 \end{align*}
	 At last we take the terminal data as \begin{align*}	\begin{pmatrix}
	 \varphi_T\\ \psi_T
	 \end{pmatrix}= \hat{\Phi}_0=\begin{pmatrix}
	 0\\1
	 \end{pmatrix} .
	 \end{align*}
	 Then the identity \eqref{eq:identity1} gives \begin{align*}\int_{0}^{T}q(t) dt= -\ip{v_0}{1}_{(H_{per}^{1})^*,H_{per}^1}=:-\gamma_0.\end{align*} 
	 Thus, we have the following lemma:
	 \begin{lemma}\label{lem5.1}
	 	The control system \eqref{eq:cntrl2} is null controllable if and only if for any initial data $(u_0, v_0)\in \mathbf{H}^*$ with $\ip{u_0}{1}_{({H^2_{per})}^*,H^2_{per}}=0$, there exists a function $ q\in L^2(0,T)$ such that the following hold
	 	\begin{align}\label{bdrymom1}
	 		\begin{cases}
	 			\int_{-\frac{T}{2}}^{\frac{T}{2}}e^{-\overline{\lambda_k^\pm}t}q\left(t+\frac{T}{2}\right) =
	 			-e^{\overline{\lambda_k^\pm}\frac{T}{2}}\gamma_k^\pm ,\text{ for } k \in \z\setminus\{0\},\vspace{0.2cm} \\
	 			\int_{-\frac{T}{2}}^{\frac{T}{2}}q\left(t+\frac{T}{2}\right) dt=-\gamma_0,
	 		\end{cases}
	 	\end{align}
	 	where \begin{align*}
	 		&\gamma_k^\pm=\frac{1}{(ik^3+k^2-ik+\overline{\theta_k^\pm}\,)}\bigg[\ip{u_0}{e^{ikx}}_{(H_{per}^{2})^*,H_{per}^2}+ \overline{\theta_k^\pm} \ip{v_0}{e^{ikx}}_{(H_{per}^{1})^*,H_{per}^1}\bigg],\\ \text{ and }&\gamma_0 = \ip{v_0}{1}_{(H_{per}^{1})^*,H^1_{per}}.
	 	\end{align*}
	 \end{lemma}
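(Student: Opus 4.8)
The plan is to run the same duality scheme as in the interior case (\Cref{sect4.1.1}), now carrying along the boundary terms created by the inhomogeneous periodic condition $u(t,0)=u(t,2\pi)+q(t)$. First I would assume all data smooth, take the pairing $\ip{\cdot}{\cdot}_{\mathbf{H^*},\mathbf{H}}$ of the first two equations of \eqref{eq:cntrl2} against the solution $(\varphi,\psi)$ of the adjoint system \eqref{eq:adj} with an arbitrary terminal datum $(\varphi_T,\psi_T)\in\mathbf{H}$, and integrate by parts repeatedly in $x$ and $t$. Periodicity of $(\varphi,\psi)$ and of every trace of $u$ except the zeroth one cancels all boundary contributions but the one carrying the jump $q(t)$, leaving precisely the functional $\overline{\varphi_{xxx}(t,2\pi)}-\overline{\varphi_{xx}(t,2\pi)}+\overline{\varphi_x(t,2\pi)}+\overline{\psi(t,2\pi)}$ as the multiplier of $q(t)$; this is the identity displayed just before \eqref{eq:identity1}. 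The passage to general $(u_0,v_0)\in\mathbf{H^*}$ and $q\in L^2(0,T)$ is a density argument whose right-hand side is controlled by the trace bounds of \Cref{prop2.2} and whose limit is taken using the adjoint regularity of \Cref{adj well-prop}; it is exactly the transposition formulation behind \Cref{well-prop3}.

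Given the identity \eqref{eq:identity1}, the equivalence is a standard duality argument. Since the pairing between $\mathbf{H^*}$ and $\mathbf{H}$ is nondegenerate, $(u(T,\cdot),v(T,\cdot))=(0,0)$ in $\mathbf{H^*}$ is equivalent to the vanishing of $\ip{(u(T,\cdot),v(T,\cdot))}{(\varphi_T,\psi_T)}_{\mathbf{H^*},\mathbf{H}}$ over a total family of $\mathbf{H}$, for which I would take the eigenvectors $\Phi_0$, $\hat{\Phi}_0$, $\Phi_k^{\pm}$, $k\in\z\setminus\{0\}$. Totality holds because for each fixed $k\neq0$ the vectors $(1,\theta_k^+)$ and $(1,\theta_k^-)$ span $\cplx^2$ — they are the coordinate vectors of eigenvectors of $A^*$ for the distinct eigenvalues $\lambda_k^+\neq\lambda_k^-$ of \Cref{rem} — so the span of the eigenvectors contains $(e^{ikx},0)$ and $(0,e^{ikx})$ for all $k\in\z$, and finite trigonometric combinations are dense in $\mathbf{H}=H^2_{per}\times H^1_{per}$. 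Inserting each eigenvector into \eqref{eq:identity1}, using that the corresponding adjoint solution is the separated function $e^{\lambda_k^{\pm}(T-t)}\Phi_k^{\pm}$ (resp. the stationary $\Phi_0$, $\hat{\Phi}_0$), so that $\varphi(0,\cdot),\psi(0,\cdot)$ and the traces at $x=2\pi$ become the explicit quantities \eqref{eq:phizero1}--\eqref{eq:bstarphi2}, and finally shifting $t\mapsto t+\tfrac{T}{2}$ to recenter the time interval onto $(-\tfrac{T}{2},\tfrac{T}{2})$, turns \eqref{eq:identity1} into the two families of scalar relations in \eqref{bdrymom1}. The choice $\Phi_0$ yields, instead of a moment equation, the compatibility condition $\ip{u_0}{1}_{(H^2_{per})^*,H^2_{per}}=0$ — whence the hypothesis on $u_0$ — while $\hat{\Phi}_0$ yields $\int_{-T/2}^{T/2}q(t+\tfrac{T}{2})\,dt=-\gamma_0$ with $\gamma_0=\ip{v_0}{1}_{(H^1_{per})^*,H^1_{per}}$.

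The only step requiring genuine verification is that the scalars $\gamma_k^{\pm}$ are well defined, i.e. that $ik^3+k^2-ik+\overline{\theta_k^{\pm}}\neq0$ for all $k\neq0$. For $|k|$ large I would use the asymptotics \eqref{sph}, \eqref{spp} and the resulting bounds $|\theta_k^{+}|\le C|k|^{3}$, $|\theta_k^{-}|\le c|k|^{-3}$ of \Cref{rem1}: on both branches the leading term is $ik^3$, so the expression behaves like $ik^3$ and cannot vanish; the finitely many small $|k|$ are handled by direct substitution, which is what \Cref{fig4} and \Cref{fig5} display. With this, \eqref{bdrymom1} is a meaningful moment problem and both implications close: if \eqref{eq:cntrl2} is null controllable then the identity yields \eqref{bdrymom1}, and conversely any $q$ solving \eqref{bdrymom1} makes $\ip{(u(T,\cdot),v(T,\cdot))}{\cdot}_{\mathbf{H^*},\mathbf{H}}$ vanish on the total family, hence $(u(T,\cdot),v(T,\cdot))=(0,0)$ in $\mathbf{H^*}$. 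I anticipate no real obstacle in this lemma itself; the substantive work — actually producing such a $q\in L^2(0,T)$ — is the role of the biorthogonal family of \Cref{prop1} and is carried out in the proof of \Cref{thm2}.
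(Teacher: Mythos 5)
Your overall strategy is the same as the paper's: duality identity, density extension (transposition), totality of the eigenvectors of $A^*$ in $\mathbf H$, and substitution of the eigenvectors $\Phi_k^\pm$, $\Phi_0$, $\hat\Phi_0$ into the identity. The totality argument you sketch is in fact slightly more explicit than the paper's (which just asserts the eigenfunctions form a basis), and the case-by-case treatment of $\Phi_0$ and $\hat\Phi_0$ is exactly right: $\Phi_0$ produces the compatibility condition on $u_0$, $\hat\Phi_0$ produces the zero-frequency moment equation.

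There is, however, a genuine gap in your well-definedness check for $\gamma_k^+$. You argue that, since $|\theta_k^+|\le C|k|^3$, the denominator $ik^3+k^2-ik+\overline{\theta_k^+}$ ``behaves like $ik^3$ and cannot vanish.'' An upper bound on $|\theta_k^+|$ of the same order as the term you want to dominate cannot give this conclusion, and in fact the conclusion is false: from $\lambda_k^+=-k^2+ik+\dots$ and $\eta_k^+=-ik^5+ik^3+3k^2-2ik+O(1)$ one gets, by long division,
\begin{align*}
\theta_k^+ \;=\; \frac{\eta_k^+}{\lambda_k^+} \;=\; ik^3 - k^2 - 2ik - 1 + O(|k|^{-1}),
\end{align*}
so $\overline{\theta_k^+} = -ik^3 - k^2 + 2ik - 1 + O(|k|^{-1})$ and the $ik^3$ and $k^2$ terms \emph{cancel} in the sum, leaving
\begin{align*}
ik^3 + k^2 - ik + \overline{\theta_k^+} \;=\; ik - 1 + O(|k|^{-1}).
\end{align*}
The expression is still nonvanishing for large $|k|$, but the correct reasoning requires the full next-order asymptotics of $\theta_k^+$, not the mere bound $|\theta_k^+|\le C|k|^3$ from \Cref{rem1}, precisely because the leading order kills itself. (For the $-$ branch your argument is fine, since $|\theta_k^-|=O(|k|^{-3})$ is genuinely lower order.) The paper sidesteps this by appealing to \Cref{fig4} and \Cref{fig5} for both the asymptotic and small-$|k|$ regimes; if you want an analytic argument, you must exhibit the cancellation and land on the $ik$-order remainder. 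Everything else in your proposal is sound and matches the paper.
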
 
	\subsubsection{\textbf{Control acting on the heat component}}
	Let $(u_0, v_0)\in \mathbf{H^*}$ and $q\in L^2(0,T)$.
	Then using arguments similar to that of the last \Cref{sect5.1.1}, we get the following equivalent identity for null controllability: \begin{align}\label{eq:identity2}
	\ip{\begin{pmatrix}
		u_0\\ v_0
		\end{pmatrix}}{\begin{pmatrix}
		\varphi(0, \cdot)\\ \psi(0, \cdot)
		\end{pmatrix}}_{\mathbf{H^*},\mathbf{H}}
	+\int_{0}^{T}\left(\overline{\varphi(t, 2\pi)}+\overline{\psi(t,2\pi)}+\overline{\psi_x(t,2\pi)}\right)q(t) \,dt=0,
	\end{align}
	which leads to the following moment problem for the system \eqref{eq:cntrl2}:
	\begin{lemma}
		The control system \eqref{eq:cntrl2} is null controllable if and only if for any initial data $(u_0, v_0) \in \mathbf{H}^*$ with $\ip{u_0}{1}_{({H^2_{per})}^*,H^2_{per}}=0$ and $\ip{v_0}{1}_{{(H^1_{per})}^*,H^1_{per}}=0$, there exists a function $ q\in L^2(0,T)$ such that the following hold
		\begin{align}\label{bdrymom2}
				\int_{-\frac{T}{2}}^{\frac{T}{2}}e^{-\overline{\lambda_k^\pm}t}q\left(t+\frac{T}{2}\right)\, dt =
				-e^{\overline{\lambda_k^\pm}\frac{T}{2}}\gamma_k^\pm ,\text{ for } k \in \z\setminus\{0\},
		\end{align}
		where \begin{equation}\label{eq:gamma}\gamma_k^\pm=\frac{1}{\left(1+\overline{\theta_k^\pm}-ik \overline{\theta_k^\pm}\right)} \bigg[\ip{u_0}{e^{-ikx}}_{{(H^2_{per})}^*,H^2_{per}}+ \overline{\theta_k^\pm} \ip{v_0}{e^{-ikx}}_{{(H^1_{per})}^*,H^1_{per}}\bigg].
		\end{equation}
	\end{lemma}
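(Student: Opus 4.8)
The plan is to follow the boundary-control analogue of \Cref{sect4.1.1}; in fact the whole argument is obtained from the proof of \Cref{lem5.1} by changing only the boundary bookkeeping, so I would just point out the differences. First I would take $q$, the initial datum $(u_0,v_0)$ and the terminal datum $(\varphi_T,\psi_T)$ of \eqref{eq:adj} smooth, pair the two equations of \eqref{eq:cntrl2a} with $\varphi$ and $\psi$ in the duality $\ip{\cdot}{\cdot}_{\mathbf{H}^*,\mathbf{H}}$, and integrate by parts repeatedly in $t$ and $x$. Since $u,\varphi$ are periodic together with their derivatives up to order three and $v,\psi$ up to order one, every spatial boundary term vanishes except those fed by the single jump $v(t,0)=v(t,2\pi)+q(t)$, which enter through the coupling term $v_x$ in the first equation and through $-v_{xx}+v_x$ in the second; they assemble into the weight $\overline{\varphi(t,2\pi)}+\overline{\psi(t,2\pi)}+\overline{\psi_x(t,2\pi)}$ appearing in \eqref{eq:identity2}. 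Using \Cref{adj well-prop}, \Cref{prop2.2} and \Cref{well-prop3} I would then extend \eqref{eq:identity2} by density to $(u_0,v_0)\in\mathbf{H}^*$, $q\in L^2(0,T)$ and $(\varphi_T,\psi_T)\in\mathbf{H}$. As in \Cref{sect4.1.1}, null controllability of \eqref{eq:cntrl2a} is equivalent to requiring \eqref{eq:identity2} without its time-$T$ pairing term for all $(\varphi_T,\psi_T)\in\mathbf{H}$, and since the eigenvectors $\{\Phi_k^\pm\}_{k\in\z\setminus\{0\}}\cup\{\Phi_0,\hat{\Phi}_0\}$ of $A^*$ form a (Riesz) basis of $\mathbf{H}$ it is enough to test against each of them.

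For $k\in\z\setminus\{0\}$, the adjoint solution with terminal datum $\Phi_k^\pm$ is $e^{\lambda_k^\pm(T-t)}(e^{ikx},\theta_k^\pm e^{ikx})$, so that $\varphi(t,2\pi)=e^{\lambda_k^\pm(T-t)}$, $\psi(t,2\pi)=\theta_k^\pm e^{\lambda_k^\pm(T-t)}$, $\psi_x(t,2\pi)=ik\,\theta_k^\pm e^{\lambda_k^\pm(T-t)}$ and $(\varphi(0,\cdot),\psi(0,\cdot))=e^{\lambda_k^\pm T}(e^{ikx},\theta_k^\pm e^{ikx})$. Feeding this into \eqref{eq:identity2}, conjugating and changing $t\mapsto t+\frac{T}{2}$, I would reach
\[
\bigl(1+\overline{\theta_k^\pm}-ik\,\overline{\theta_k^\pm}\bigr)\int_{-\frac{T}{2}}^{\frac{T}{2}}e^{-\overline{\lambda_k^\pm}t}\,q\!\left(t+\tfrac{T}{2}\right)dt=-e^{\overline{\lambda_k^\pm}\frac{T}{2}}\Bigl(\ip{u_0}{e^{-ikx}}_{(H^2_{per})^*,H^2_{per}}+\overline{\theta_k^\pm}\,\ip{v_0}{e^{-ikx}}_{(H^1_{per})^*,H^1_{per}}\Bigr),
\]
and dividing by the scalar $1+\overline{\theta_k^\pm}-ik\,\overline{\theta_k^\pm}$ gives precisely \eqref{bdrymom2} with $\gamma_k^\pm$ as in \eqref{eq:gamma}.

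The two zero modes produce the compatibility conditions. Testing against $\Phi_0=(1,0)$ (where $\varphi\equiv1$, $\psi\equiv0$, $\psi_x\equiv0$) forces $\ip{u_0}{1}_{(H^2_{per})^*,H^2_{per}}+\int_0^Tq(t)\,dt=0$, and testing against $\hat{\Phi}_0=(0,1)$ forces $\ip{v_0}{1}_{(H^1_{per})^*,H^1_{per}}+\int_0^Tq(t)\,dt=0$; under the hypotheses $\ip{u_0}{1}=\ip{v_0}{1}=0$ both collapse to the single identity $\int_{-\frac{T}{2}}^{\frac{T}{2}}q(t+\frac{T}{2})\,dt=0$. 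I would \emph{not} append this last identity to \eqref{bdrymom2}: in the proof of \Cref{thm2a} the control will be taken as a superposition of the functions $\Theta_k^\pm(\cdot-\frac{T}{2})$ with $k\neq0$, and the $l=0$ instance of \Cref{prop1} gives $\int_{-\frac{T}{2}}^{\frac{T}{2}}\Theta_k^\pm(t)\,dt=\delta_{k0}\delta_\pm=0$ for every $k\neq0$, so any such control automatically has vanishing integral. This establishes one implication of the lemma; the converse follows by reading the equivalences above backwards, again letting the terminal data run over the basis of $\mathbf{H}$.

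The only genuinely new point is that the scalar $1+\overline{\theta_k^\pm}-ik\,\overline{\theta_k^\pm}$ must not vanish for any $k\in\z\setminus\{0\}$, so that $\gamma_k^\pm$ in \eqref{eq:gamma} makes sense; this check, together with the boundary-term accounting of the first step, is where I expect the only real work to lie. For the $-$ branch it is immediate from \Cref{rem1}, since $|\theta_k^-|\le c|k|^{-3}\to0$ makes the scalar tend to $1$, hence nonzero for $|k|$ large. For the $+$ branch, the asymptotics \eqref{sph} and the explicit formula for $\eta_k^\pm$ give $\theta_k^+=ik^3+O(k^2)$, whence $1+\overline{\theta_k^+}-ik\,\overline{\theta_k^+}=-k^4+O(|k|^3)$, which is nonzero and of size $\sim k^4$ for $|k|$ large. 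Clearing $\theta_k^\pm=\eta_k^\pm/\lambda_k^\pm$, the requirement becomes $\lambda_k^\pm+(1+ik)\eta_k^\pm\neq0$, and for the remaining finitely many $k$ one confirms this by direct computation, exactly as was done for \Cref{lem5.1} in \Cref{fig4} and \Cref{fig5}. Granting this non-vanishing and the Riesz basis property of the eigenvectors, the whole statement is a transcription of the proof of \Cref{lem5.1}.
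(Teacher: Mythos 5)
Your proposal is correct and follows the same route as the paper: the paper itself only says ``using arguments similar to those of the previous subsection'' and quotes the resulting identity, so your transcription of the Lemma~\ref{lem5.1} argument with the new boundary weight $\overline{\varphi(t,2\pi)}+\overline{\psi(t,2\pi)}+\overline{\psi_x(t,2\pi)}$, the resulting divisor $1+\overline{\theta_k^\pm}-ik\,\overline{\theta_k^\pm}$, and the two zero-mode constraints is exactly what is intended. You also correctly observe that testing against $\Phi_0$ and $\hat{\Phi}_0$ produces the extra compatibility identity $\int_{-T/2}^{T/2}q(t+\tfrac{T}{2})\,dt=0$, which the lemma's statement of \eqref{bdrymom2} does not list, and you rightly note that the constructed control $q=-\sum_{k\neq0}e^{\overline{\lambda_k^\pm}T/2}\gamma_k^\pm\Theta_k^\pm(\cdot-\tfrac{T}{2})$ satisfies it automatically because the $l=0$ case of \Cref{prop1} forces $\int\Theta_k^\pm=0$ for $k\neq0$; the paper's proof of \Cref{thm2a} implicitly relies on the same fact by omitting the $\Theta_0$ term. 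Your asymptotic justification that $1+\overline{\theta_k^\pm}-ik\,\overline{\theta_k^\pm}\neq0$ (the scalar $\sim -k^4$ on the $+$ branch, $\to1$ on the $-$ branch) is a slightly more quantitative version of the figure-based check the paper uses and is consistent with \Cref{rem1}.
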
 

	\noindent
	From \Cref{fig6}, \Cref{fig7} given below, one can easily conclude that $(\,1+\overline{\theta_k^\pm}-ik\, \overline{\theta_k^\pm}\,)\neq 0,\, \forall k\in\z\setminus\{0\}$, and so $\gamma_k^\pm$ is well defined.
	\begin{figure}[H]
		\centering
		\begin{subfigure}[b]{0.49\textwidth}
			\includegraphics[width=1\textwidth]{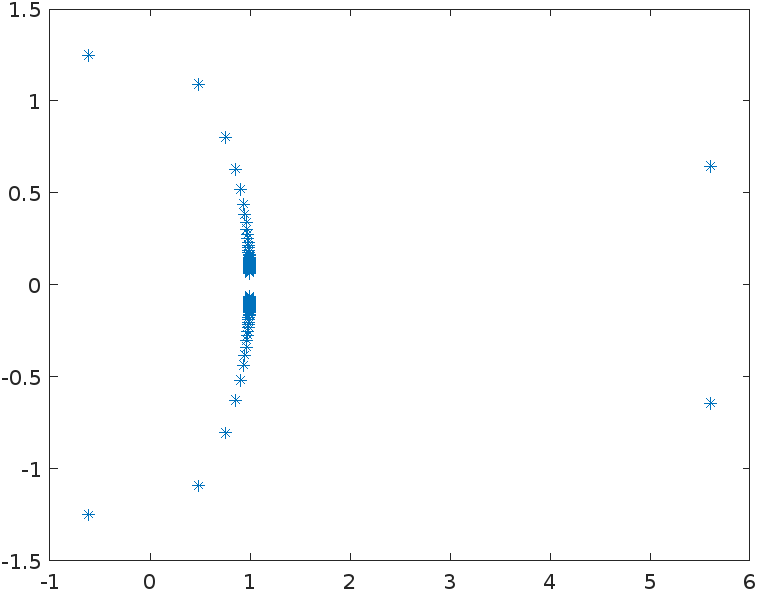}
			\caption{Assymptotic behaviour}
		\end{subfigure}
		\begin{subfigure}[b]{0.49\textwidth}
			\includegraphics[width=1\textwidth]{BD21(zoom).png}
			\caption{for $k\in[-6,6]\cap\z\setminus\{0\}$}
		\end{subfigure}
		\caption{Plot of $\lambda_k^++(1+ik)\eta_k^+$ to show $1+(1+ik)\theta_k^+\neq0$}
		\label{fig6}
	\end{figure}
	\begin{figure}[H]
		\centering
		\begin{subfigure}[b]{0.45\textwidth}
			\includegraphics[width=1\textwidth]{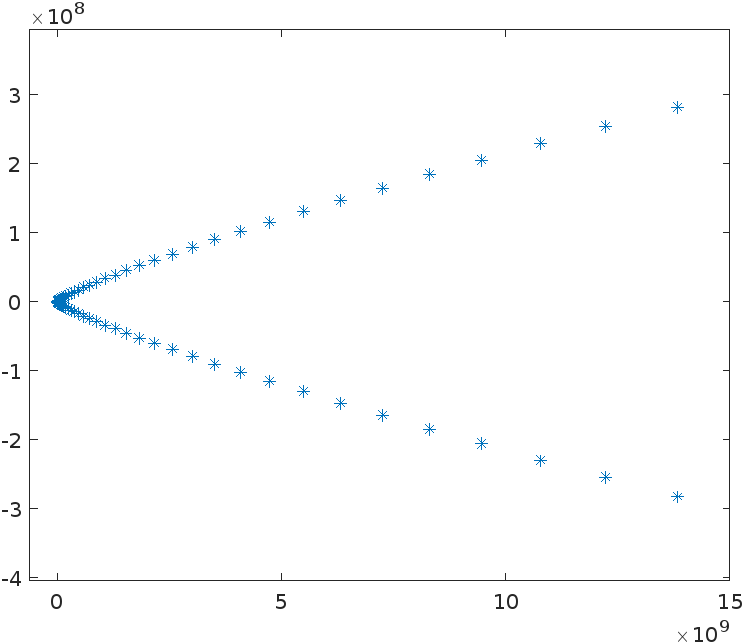}
			\caption{Assymptotic behaviour}
		\end{subfigure}
		\begin{subfigure}[b]{0.49\textwidth}
			\includegraphics[width=1\textwidth]{BD22(zoom).png}
			\caption{for $k\in[-2,2]\cap\z\setminus\{0\}$}
		\end{subfigure}
		\caption{Plot of $\lambda_k^-+(1+ik)\eta_k^-$ to show $1+(1+ik)\theta_k^-\neq0$}
		\label{fig7}
	\end{figure}

	\subsection{Proof of \Cref{thm2}}
	\begin{proof}
		As argued in \Cref{sect5.1.1}, we know that proving \Cref{thm2} is equivalent to solving the moment problem \eqref{bdrymom1}. So, let us define
		\begin{align*}
		q(t)=-\sum_{k\in \z\setminus\{0\}}e^{\overline{\lambda^+_k}\frac{T}{2}}\gamma_k^+\Theta_k^+\left(t-\frac{T}{2}\right)-\sum_{k\in \z\setminus\{0\}}e^{\overline{\lambda^-_k}\frac{T}{2}}\gamma_k^-\Theta_k^-\left(t-\frac{T}{2}\right)-\gamma_0\,\Theta_0\left(t-\frac{T}{2}\right).
		\end{align*}
		Using proposition \eqref{prop1}, one can easily conclude that $q$ solves the moment problem \eqref{bdrymom1}. The proof of the theorem is complete if we can show that $q\in L^2(0,T)$. 
		\begin{align*}
		&||q||_{L^2(0,T)}\leq C\left( \sum_{k\in \z\setminus\{0\}}\left|e^{\overline{\lambda_k^+}\frac{T}{2}}\right||\gamma_k^+|\,||\Theta_k^+||_{L^2(0,T)}+\sum_{k\in \z\setminus\{0\}}\left|e^{\overline{\lambda_k^-}\frac{T}{2}}\right||\gamma_k^-|\,||\Theta_k^-||_{L^2(0,T)}\right)\\
		&\hspace{15 mm}\leq C\left(||u_0||_{(H_{per}^{2})^*}+||v_0||_{(H_{per}^{1})^*}\right)\left( \sum_{k\in \z\setminus\{0\}}\frac{|k|^6}{|ik^3+k^2-ik+\overline{\theta_k^+}|}e^{-T |k|^2 -2\pi |k|^{\frac{1}{2}}+3(3+2\sqrt{2})\pi|k|}\right. \\ &\left.\hspace{47 mm}+\sum_{k\in \z\setminus\{0\}}\frac{|k|^7}{|ik^3+k^2-ik+\overline{\theta_k^-}|}\, e^{-T |k|^4 +(2\sqrt{2}+1)\pi |k|^2+\frac{T}{2}|k|^2+c\pi|k|}\right)\\
		&\hspace{15mm}<\infty.
		\end{align*}
	\end{proof}	
	\subsection{Proof of \Cref{thm2a}}
	\begin{proof}
		Similar to the previous proof, proving this theorem is equivalent to solving the moment problem \eqref{bdrymom1}. For that, we define
		\begin{align*}
			q(t)= -\sum_{k\in \z\setminus\{0\}}e^{\overline{\lambda^+_k}\frac{T}{2}}\gamma_k^+\Theta_k^+\left(t-\frac{T}{2}\right)-\sum_{k\in \z\setminus\{0\}}e^{\overline{\lambda^-_k}\frac{T}{2}}\gamma_k^-\Theta_k^-\left(t-\frac{T}{2}\right).
		\end{align*}
		Estimating this expression in $L^2(0,T)$ as done before, it is easy to see that $q\in L^2(0,T)$ and hence this $q$ solves the moment problem \eqref{bdrymom1}.
	\end{proof}
	\section{Construction of biorthogonal family of $\{1,e^{\lambda_k^\pm t}\}_{k \in \zahl\setminus\{0\}}$}\label{sect6}
	This section is devoted to the proof of the \Cref{prop1}. The entire study relies on the theory of logarithmic integral and complex analysis. One can refer to the books \cite{PK1}, \cite{PK} and \cite{RY} for detailed theory. Let us now define the exponential type functions
	\begin{definition}[Entire functions of exponential type]\quad\quad\quad\quad\quad

		\begin{itemize}
			\item An entire function $f$ is said to be of exponential type if there exists positive constants $A$ and $B$ such that
			\begin{align*}
			|f(z)|\leq A e^{B|z|}, \, z \in \cplx.
			\end{align*}
			\item An entire function $f$ is said to be of exponential type at most $\alpha\geq0$, if for any $\epsilon>0$, there exists a $A_{\epsilon}>0$ such that
			\begin{align*}
			|f(z)|\leq A_{\epsilon} e^{(\alpha+\epsilon) |z|}, \, z \in \cplx.
			\end{align*}
		\end{itemize}
	\end{definition}
	\begin{definition}(Sine type function)
		An entire function $f$ of exponential type $\pi$ is said to be of sine type if
		\begin{enumerate}[(i)]
			\item the zeros of $f(z)$, {$\mu_k$} satisfies gap condition, i.e., there exist $\delta>0$ such that $|\mu_k-\mu_l|>\delta$ for $k \neq l$, and
			\item there exist positive constants $C_1$,$C_2$ and $C_3$ such that
			$$C_1e^{\pi|y|}\leq |f(x+iy)| \leq C_2e^{\pi|y|},\,\forall \,x,y \in \rea \text{ with }|y|\geq C_3. $$
		\end{enumerate}
	\end{definition}
	The following proposition states some important properties of sine type functions:
	\begin{proposition}
		Let $f$ be a sine type function, and let $\{\mu_k\}_{k\in \mathcal{I}}$ with $\mathcal{I}\subset\z$ be its sequence of zeros. Then, we have:
		\begin{enumerate}[(a)]
			\item for any $\epsilon>0$, there exists constants $K_{\epsilon},\tl K_{\epsilon}>0$ such that
			$$K_{\epsilon}e^{\pi|y|}\leq |f(x+iy)|\leq \tl K_{\epsilon}e^{\pi|y|}, \text{ if dist$\left(x+iy,\{\mu_k\}\right)>\epsilon$},$$
			\item there exist some constants $K_1,K_2>0$ such that
			$$K_1<|f'(\mu_k)|<K_2, \quad \forall k\in\mathcal{I}.$$
		\end{enumerate}
	\end{proposition}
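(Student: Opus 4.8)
The plan is to deduce both parts of the proposition from one analytic input, the Phragm\'en--Lindel\"of principle in a half-plane, together with one soft tool, a normal-families argument; the gap condition on the zeros will be used only in the last step, to guarantee simplicity of the zeros. \emph{Upgrading the definition to a global upper bound.} The hypothesis gives $|f(x+iy)|\le C_2e^{\pi|y|}$ only for $|y|\ge C_3$, so I would first extend it to the strip $|y|\le C_3$. Since that strip may be wide, Phragm\'en--Lindel\"of for a strip is not directly applicable, and I would instead work in a half-plane: the function $w\mapsto f(w+iC_3)e^{-i\pi w}$ is of exponential type, is bounded by $C_2e^{\pi C_3}$ on $\mathbb{R}$, and along the negative imaginary axis its growth exponent is $\le 0$ (because $f$ has exponential type $\pi$), so Phragm\'en--Lindel\"of in $\{\operatorname{Im}w\le0\}$ bounds it by $C_2e^{\pi C_3}$ there; this gives $|f(z)|\le C_2e^{\pi C_3}e^{-\pi\operatorname{Im}z}$ for $\operatorname{Im}z\le C_3$, hence $M:=\sup_{\mathbb{R}}|f|<\infty$. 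Applying the same principle to $f(z)e^{i\pi z}$ in the upper half-plane and to $f(z)e^{-i\pi z}$ in the lower one then yields $|f(x+iy)|\le Me^{\pi|y|}$ on all of $\mathbb{C}$, which is the upper estimate in (a) (in fact with a constant independent of $\epsilon$). Moreover every zero satisfies $|\operatorname{Im}\mu_k|<C_3$, since $|f|\ge C_1e^{\pi|y|}>0$ for $|y|\ge C_3$.

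\emph{The lower bound away from the zeros.} I would argue by contradiction: if for some $\epsilon>0$ there are points $z_n=x_n+iy_n$ with $\dist(z_n,\{\mu_k\})\ge\epsilon$ and $|f(z_n)|e^{-\pi|y_n|}\to0$, then necessarily $|y_n|\le C_3$ for large $n$, so along a subsequence $y_n\to y_0\in[-C_3,C_3]$. The translates $f_n(z):=f(z+x_n)$ are locally uniformly bounded by $Me^{\pi|\operatorname{Im}z|}$, so by Montel's theorem a subsequence converges locally uniformly to an entire function $f_\infty$; passing the lower bound on the lines $|y|\ge C_3$ to the limit shows $f_\infty\not\equiv0$, while $f_n(iy_n)=f(z_n)\to0$ together with $y_n\to y_0$ forces $f_\infty(iy_0)=0$. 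On the other hand $f_n$ has no zero in $B(iy_n,\epsilon)$, hence none in $B(iy_0,\epsilon/2)$ for $n$ large, and Hurwitz's theorem then forbids $f_\infty$ from vanishing in $B(iy_0,\epsilon/2)$ — contradicting $f_\infty(iy_0)=0$. This proves (a), with some $K_\epsilon>0$.

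\emph{The estimates on $f'$ at the zeros.} The gap condition forces the zeros to be simple, since two coincident zeros would violate $|\mu_k-\mu_l|>\delta$; hence near each $\mu_k$ one may write $f(z)=(z-\mu_k)g_k(z)$ with $g_k$ holomorphic, $g_k(\mu_k)=f'(\mu_k)\ne0$, and $g_k$ zero-free on $B(\mu_k,\delta)$ because $\mu_k$ is the only zero of $f$ there. For the upper bound, Cauchy's formula on $|z-\mu_k|=1$ together with the global bound of Step 1 gives $|f'(\mu_k)|\le\max_{|z-\mu_k|=1}|f(z)|\le Me^{\pi(C_3+1)}=:K_2$, using $|\operatorname{Im}\mu_k|<C_3$. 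For the lower bound, the minimum modulus principle applied to the non-vanishing $g_k$ gives $|f'(\mu_k)|=|g_k(\mu_k)|\ge\tfrac{4}{\delta}\min_{|z-\mu_k|=\delta/4}|f(z)|$; on that circle $\dist(z,\{\mu_k\})=\delta/4$, so part (a) with $\epsilon=\delta/8$ bounds $|f(z)|$ below by $K_{\delta/8}e^{\pi|\operatorname{Im}z|}\ge K_{\delta/8}$, whence $|f'(\mu_k)|\ge\tfrac{4}{\delta}K_{\delta/8}=:K_1>0$ uniformly in $k$.

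The main obstacle is the first step: turning the one-sided estimate of the definition into a bound on the real line, equivalently into the strip $|y|\le C_3$. Because that strip need not be narrow, one cannot invoke Phragm\'en--Lindel\"of for a strip and must route through half-planes, and the delicate point there is verifying that the auxiliary functions $f(z)e^{-i\pi w}$ and $f(z)e^{\pm i\pi z}$ genuinely have nonpositive growth exponent along the relevant imaginary rays. Once the global bound $|f(x+iy)|\le Me^{\pi|y|}$ is secured, Steps 2 and 3 are a routine normal-families argument and a one-line Cauchy/minimum-modulus estimate.
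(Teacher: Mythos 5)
The paper does not prove this proposition; it is stated as a known fact about sine-type functions (standard in the theory, see Young's \emph{Nonharmonic Fourier Series} or Koosis's books, which the paper cites), so there is no internal proof to compare against. Your argument is correct and is essentially the textbook one, but it contains one misunderstanding worth flagging. The claim that ``Phragm\'en--Lindel\"of for a strip is not directly applicable'' because the strip $|y|\le C_3$ may be wide is false: the strip version allows growth up to $\exp\bigl(e^{c|x|}\bigr)$ for any $c<\pi/(2C_3)$, which is \emph{doubly} exponential, while $f$ has mere exponential type (order one). So for every strip width, a function of exponential type satisfies the hypothesis by a huge margin, and one may simply bound $fe^{\pm i\pi z}$ on the two lines $y=\pm C_3$ and apply strip Phragm\'en--Lindel\"of directly, which is cleaner than the half-plane detour. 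Conversely, your half-plane route is the one that needs care: Phragm\'en--Lindel\"of in a full half-plane (opening $\pi$) fails for order one with positive type, so after verifying that $g(w)=f(w+iC_3)e^{-i\pi w}$ has nonpositive growth exponent along the negative imaginary ray, you must split $\{\operatorname{Im}w\le 0\}$ along that ray into two quarter-planes of opening $\pi/2$ and apply Phragm\'en--Lindel\"of there (where order one is admissible); as stated, the invocation is a bit too quick. A small arithmetic slip: carrying $w=z-iC_3$ through yields $|f(z)|\le C_2e^{2\pi C_3}e^{-\pi\operatorname{Im}z}$, not $C_2e^{\pi C_3}e^{-\pi\operatorname{Im}z}$, though this changes nothing downstream. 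The normal-families contradiction for the lower bound in (a), and the Cauchy/minimum-modulus estimates for $f'(\mu_k)$ in (b), are both correct; your reading of the gap condition (zeros listed with multiplicity, hence simple) is the intended one and is necessary for (b) even to be well-posed.
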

	Let us denote $\overline{\lambda_k^\pm}$ by $\mu_k^\pm$, for $k\in\z\setminus\{0\}$, and let $\mu_0=\mu_0^+=\mu_0^-=0$. 
	The main goal of this section is to find a class of entire functions $\mathcal{E}=\{\Psi_0,\Psi_k^\pm\}_{k\in \mathbb{Z}\setminus\{0\}}$ with the following properties:
	\begin{enumerate}
		\item[1.]	The family $\mathcal{E}$ contains entire functions of exponential type $\frac{T}{2}$, i.e., there exists a positive constant $C$ such that
		\begin{align}\label{exp typ}
			\abs{\Psi_0(z)}\leq C e^{\frac{T}{2}\abs{z}} \text{ and } 	\abs{\Psi_k^\pm(z)}\leq C e^{\frac{T}{2}\abs{z}}, \quad \forall z\in \cplx,  \quad \forall k\in\zahl\setminus\{0\}.
		\end{align}
		\item[2.] All the members of $\mathcal{E}$ are square integrable on the real line, i.e.
		\begin{align}\label{sq intg}
			\int_{\rea}\abs{\Psi_k^\pm(x)}^2<\infty \text{ and } \int_{\rea}\abs{\Psi_0(x)}^2< \infty.
		\end{align}
		\item[3.] The following relations hold 
		\begin{align}\label{bior}
			\begin{cases}
				\Psi_k^\pm(-i\mu_l^\pm)=\delta_{kl} \delta_{\pm},  \\
				\Psi_0(-i\mu_l^\pm)=\delta_{0l},
			\end{cases}\hspace{2 mm} k\in \zahl\setminus \{0\}, \, l \in \zahl
		\end{align}
	\end{enumerate}
Let us briefly explain how the existence of such family $\mathcal{E}$ of entire functions will give the desired biorthogonal, i.e., prove \Cref{prop1}. Thus for the time being, we assume the class of entire functions $\mathcal{E}$ with the properties \eqref{exp typ}, \eqref{sq intg} and \eqref{bior} exist.
	At first, let us state the celebrated Paley-Wiener theorem.
	\begin{theorem}[\textbf{Paley-Wiener}]
		Let $f$ be an entire function of exponential type $A$ and suppose
		\begin{align*}\int_{-\infty}^{\infty}|f(x)|^2 \, dx < \infty.
		\end{align*}
		Then there exists a function $\phi \in L^2(-A, A)$ such that:
		\begin{align*}
			f(z)=\int_{-A}^{A}e^{-izt}\phi(t) dt,\, z\in \cplx.
		\end{align*}
	\end{theorem}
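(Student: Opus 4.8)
The plan is to identify $\phi$ with (a constant multiple of) the Fourier transform of the restriction $f|_{\rea}$, and then to show, by exploiting the exponential growth of $f$ in the imaginary direction, that this transform vanishes almost everywhere outside $[-A,A]$. Since $f\in L^2(\rea)$, Plancherel's theorem produces $\phi\in L^2(\rea)$ with $f(x)=\int_{\rea}e^{-ixt}\phi(t)\,dt$ for a.e.\ $x\in\rea$, the integral understood in the $L^2$ sense. Once we know $\phi=0$ a.e.\ on $\rea\setminus[-A,A]$, the function $z\mapsto\int_{-A}^{A}e^{-izt}\phi(t)\,dt$ is entire (the integrand is entire in $z$, $\phi\in L^1(-A,A)$ by Cauchy--Schwarz on the bounded interval, and one differentiates under the integral sign, or invokes Morera) and coincides with $f$ on $\rea$, hence with $f$ on all of $\cplx$ by the identity theorem; this is exactly the asserted representation.

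The core of the argument is the weighted estimate $\int_{\rea}e^{2yt}\abs{\phi(t)}^2\,dt\le C\,e^{2A\abs{y}}$ for every $y\in\rea$, with $C$ independent of $y$. Granting it, fix $\delta>0$: for $y>0$ one has $e^{2y(A+\delta)}\int_{A+\delta}^{\infty}\abs{\phi(t)}^2\,dt\le\int_{\rea}e^{2yt}\abs{\phi(t)}^2\,dt\le C\,e^{2Ay}$, so $\int_{A+\delta}^{\infty}\abs{\phi}^2\le C\,e^{-2\delta y}\to0$ as $y\to+\infty$, whence $\phi=0$ a.e.\ on $(A,\infty)$; letting $y\to-\infty$ gives, symmetrically, $\phi=0$ a.e.\ on $(-\infty,-A)$. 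To prove the weighted estimate I would shift contours: for fixed $y$, moving the line $\imag\zeta=0$ to $\imag\zeta=y$ in $\int f(\zeta)e^{i\zeta t}\,d\zeta$ via Cauchy's theorem identifies the Fourier transform of $x\mapsto f(x+iy)$ with $t\mapsto e^{yt}\phi(t)$; Plancherel on the shifted line then yields $\int_{\rea}e^{2yt}\abs{\phi(t)}^2\,dt= c\,\norm{f(\cdot+iy)}_{L^2(\rea)}^2$, and it remains to bound $\norm{f(\cdot+iy)}_{L^2(\rea)}\le e^{A\abs{y}}\norm{f}_{L^2(\rea)}$, which is the Plancherel--P\'olya estimate for entire functions of exponential type $A$ lying in $L^2(\rea)$ (obtainable from a Phragm\'en--Lindel\"of argument in each half-plane applied to $f(z)e^{\pm iAz}$).

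The main obstacle is that the hypothesis supplies only $\abs{f(z)}\le A_\eps e^{(A+\eps)\abs{z}}$, a bound that grows as $\rl z\to\pm\infty$, so neither the contour deformation (which needs $f(\sigma+is)\to0$ as $\sigma\to\pm\infty$ along horizontal segments) nor the Plancherel--P\'olya bound is immediate. The standard remedy is to run the entire argument first for the mollified function $f_\eps(z)=f(z)\bigl(\tfrac{\sin(\eps z)}{\eps z}\bigr)^2$, which is entire of exponential type $A+2\eps$ and satisfies $\abs{f_\eps(x)}\le\abs{f(x)}\min\{1,(\eps x)^{-2}\}$, hence lies in $L^1(\rea)\cap L^2(\rea)$ with genuine decay at infinity; this renders all contour shifts legitimate and shows that $\widehat{f_\eps}$ is supported in $[-A-2\eps,A+2\eps]$. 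Finally, $f_\eps\to f$ in $L^2(\rea)$, so $\widehat{f_\eps}\to\phi$ in $L^2$, and passing to the limit $\eps\to0^+$ gives $\mathrm{supp}\,\phi\subseteq[-A,A]$, which is what was needed.

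Alternatively, if one prefers to avoid the mollifier, the Plancherel--P\'olya bound can be established directly: one checks that $g(z)=f(z)e^{iAz}$ is bounded on $\rea$ and of exponential type, applies the Phragm\'en--Lindel\"of principle (in the refined form admitting linear exponential growth, dividing by $e^{i\delta z}$ and letting $\delta\to0$) to conclude $\abs{g}$ is controlled in the upper half-plane, and likewise for $f(z)e^{-iAz}$ in the lower half-plane, which together give $\abs{f(x+iy)}\le C\,e^{A\abs{y}}$ uniformly in $x$; the horizontal decay then follows from this together with $f\in L^2(\rea)$ by a one-dimensional Sobolev/uniform-continuity argument, after which the contour shifts above are justified verbatim.
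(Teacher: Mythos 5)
The paper itself does not prove this theorem; it states the classical Paley--Wiener theorem, with pointers to \cite{PK1}, \cite{PK}, \cite{RY} for the underlying theory of entire functions, and then applies it, so there is no internal proof to measure yours against. Your outline is the standard one --- take $\phi$ to be the $L^2$-Fourier transform of $f|_{\rea}$ via Plancherel, deduce the support condition $\phi=0$ a.e.\ off $[-A,A]$ from a weighted $L^2$ estimate obtained by shifting contours and applying Plancherel on the shifted line, then recover $f$ on all of $\cplx$ from the identity theorem --- and as a strategy it is sound.

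The only point worth flagging is that you present the mollification and the Phragm\'en--Lindel\"of argument as interchangeable alternatives, when in fact the mollifier does not, on its own, ``render all contour shifts legitimate.'' On a horizontal line $\Im z=y\ne 0$ the a~priori bound for $f_\eps(z)=f(z)\bigl(\sin(\eps z)/(\eps z)\bigr)^2$ coming from the exponential-type hypothesis is only $|f_\eps(\sigma+iy)|\le A_\delta\,\cosh^2(\eps y)\,e^{(A+\delta)(|\sigma|+|y|)}/(\eps^2\sigma^2)$, which still grows exponentially in $|\sigma|$; the real-axis decay supplied by the mollifier does not propagate to $y\ne 0$ by itself, so the vertical sides of the rectangle are not killed. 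What does the job is precisely the Phragm\'en--Lindel\"of/Plancherel--P\'olya control (yielding $|f(\sigma+iy)|\le C\,e^{A|y|}$ uniformly in $\sigma$, or $f(\cdot+iy)\in L^2(\rea)$ with norm $\le e^{A|y|}\|f\|_{L^2}$), after which sub-mean-value estimates and $L^2$-integrability on the strip give the horizontal decay needed to close the contour. Thus the PL/PP input is a prerequisite for the mollification route, not an alternative to it; the mollifier's genuine contribution is that it makes $\widehat{f_\eps}$ an absolutely convergent pointwise integral rather than an $L^2$-limit. Relatedly, the PL step quietly uses that $f$ is bounded on $\rea$ (so that $f(z)e^{\pm iAz}$ is bounded there); this is true given $f\in L^2(\rea)$ and exponential type, but is itself a small theorem that a complete write-up must state and prove or cite. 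With these dependencies made explicit, your proof goes through.
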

Thus applying the Paley-Wiener theorem for the class $\mathcal{E}$ of entire functions, one can get a family of functions $\mathcal{B}=\{\Theta_0,\Theta_k^\pm\}_{k\in \zahl \setminus \{0\}}$ in $L^2(\rea)$ supported in $[-\frac{T}{2}, \frac{T}{2}]$, such that the following representation holds
	\begin{align}\label{representation}
		\Psi_k^\pm(z)=\int_{-\frac{T}{2}}^{\frac{T}{2}}e^{-izt}\Theta_k^\pm(t) dt,\, z\in \cplx.
	\end{align}
	Clearly, $\Theta_0,\Theta_k^\pm$ are the inverse Fourier transform of $ \Psi_0,\Psi_k^\pm$ respectively, $\forall\, k \in \zahl \setminus \{0\}.$
	Moreover, by Plancherel's Theorem, we have: 
	\begin{align*}
		\int_{\rea}\abs{\Psi_k^\pm(x)}^2 dx=2\pi\int_{-\frac{T}{2}}^{\frac{T}{2}}\abs{\Theta_k^\pm(t)}^2 dt.
	\end{align*}
	Note that, \eqref{bior} and the representation \eqref{representation} together imply the following
	\begin{align*}
		\begin{cases}
			\int_{-\frac{T}{2}}^{\frac{T}{2}}e^{-\mu_l^\pm t}\Theta_k^\pm \,dt &= \delta_{kl} \delta_{\pm}, \text{ for } k\in \zahl\setminus\{0\}, l\in \zahl 	\vspace{0.2cm},\\
			\int_{-\frac{T}{2}}^{\frac{T}{2}}e^{-\mu_l^\pm t}\Theta_0 \,dt &= \delta_{0l} \delta_{\pm}, \text{ for }  l\in \zahl,
		\end{cases}
	\end{align*}
	which essentially proves the \Cref{prop1} with $\mu_k^\pm=\overline{\lambda_k^\pm}.$

	Now we are at the position of constructing the family $\mathcal{E}$ satisfying \eqref{exp typ}, \eqref{sq intg} and \eqref{bior} to get the existence of desired biorthogonal family, $\mathcal{B}.$ Our analysis is inspired from the work \cite{LR14}.
	At first, we introduce the following entire function which has simple zeros exactly at $-i\mu_k^\pm$ ($k \in \z\setminus\{0\}$) and $-\mu_0$.  
	\begin{align}\label{eq:P}
	P(z)=z\prod_{k\in \z\setminus\{0\}}\left(1+\frac{z}{i\mu_k^+}\right) \prod_{k\in \z\setminus\{0\}}\left(1+\frac{z}{i\mu_k^-}\right).
	\end{align}
	\subsection{Estimating the canonical product P}
	\noindent
	In this section, we find some estimates related to $P$, as stated in the proposition below. These estimates are essential for the construction of biorthogonal family.

	\begin{proposition}
		Let $P$ be the canonical product defined in \eqref{eq:P}. Then $P$ is an entire function of exponential type at most zero satisfying the following estimates:
		\begin{align}
		\label{eq:1}	\abs{P(x)} &\leq C\abs{x}^{-1} e^{\sqrt{2}\pi \sqrt{\abs{x}}} e^{2\sqrt{2}\pi\abs{x}^{\frac{1}{4}}} , x\in \rea,\\
		\label{eq:2}	\abs{P'(-i\mu_k^+)}&\geq C |k|^{-3}e^{2\pi|k|^{\frac{1}{2}}}, k \in \zahl\setminus\{0\},\\
		\label{eq:3}		\abs{P'(-i\mu_k^-)}&\geq  C |k|^{-7}e^{3\pi |k|}, k \in \zahl\setminus\{0\}.
		\end{align}	
	\end{proposition}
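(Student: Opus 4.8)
The plan is to split the canonical product \eqref{eq:P} along the two spectral branches, $P=P_{+}\cdot P_{-}$ with
\[
P_{+}(z)=z\!\!\prod_{k\in\z\setminus\{0\}}\!\!\Bigl(1+\frac{z}{i\mu_{k}^{+}}\Bigr),\qquad
P_{-}(z)=\!\!\prod_{k\in\z\setminus\{0\}}\!\!\Bigl(1+\frac{z}{i\mu_{k}^{-}}\Bigr).
\]
Since $|\mu_{k}^{+}|\asymp k^{2}$ and $|\mu_{k}^{-}|\asymp k^{4}$ by \eqref{sph}--\eqref{spp}, both products converge absolutely (genus $0$), so $P$ is entire; the exponents of convergence of the two zero–sequences being $\tfrac12$ and $\tfrac14$, $P$ has order at most $\tfrac12<1$, hence is of exponential type at most zero. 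This settles the first assertion.

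\textbf{Bound \eqref{eq:1} on $\mathbb{R}$.} For each branch I would compare $P_{\pm}$ with the model obtained by freezing the eigenvalues at their leading terms: replacing $\mu_{k}^{+}$ by $-k^{2}$ yields $z\prod_{k\neq0}(1+iz/k^{2})=z\bigl(\frac{\sinh(\pi\sqrt{iz})}{\pi\sqrt{iz}}\bigr)^{2}$, and replacing $\mu_{k}^{-}$ by $-k^{4}$ yields $\prod_{k\neq0}(1+iz/k^{4})$, which factors through $\frac{\sin(\pi W^{1/4})}{\pi W^{1/4}}$ and $\frac{\sinh(\pi W^{1/4})}{\pi W^{1/4}}$ with $W=-iz$. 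The quotient $P_{\pm}/(\text{model})$, read off symmetric partial products, converges absolutely after pairing $k\leftrightarrow-k$ (the leading corrections are odd in $k$, so pair sums are $O(k^{-2})$) and is bounded on $\mathbb{R}$ uniformly, no small denominators arising on the real axis. Evaluating the models on $\mathbb{R}$ via $|\sinh(a+ib)|\le e^{|a|}$, $|\sin(a+ib)|\le e^{|b|}$ and retaining the $|\cdot|^{1/2}$ from the radicals in the denominators gives a factor $e^{\sqrt2\,\pi\sqrt{|x|}}$ from $P_{+}$, a factor $e^{c\pi|x|^{1/4}}$ from $P_{-}$ with $c=1/\sin(\pi/8)\le2\sqrt2$, and the polynomial gain $|x|^{-1}$, which is \eqref{eq:1}.

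\textbf{Lower bounds \eqref{eq:2}--\eqref{eq:3}.} Each $-i\mu_{k}^{\pm}$ is a simple zero of $P$ (distinctness is \Cref{rem}), so
\[
P'(-i\mu_{k}^{+})=-\!\!\prod_{\substack{l\neq0\\ l\neq k}}\!\!\Bigl(1-\frac{\mu_{k}^{+}}{\mu_{l}^{+}}\Bigr)\prod_{l\neq0}\Bigl(1-\frac{\mu_{k}^{+}}{\mu_{l}^{-}}\Bigr),
\]
and likewise for $-i\mu_{k}^{-}$; I would bound each product below by the same model–comparison, now evaluated at $z=-i\mu_{k}^{\pm}$. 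The exponential rates come from the $\sinh$/$\sin$ factors of the models: for the $\mu^{-}$–model the governing quantity is $W^{1/4}$ with $W=-iz$, and at $z=-i\mu_{k}^{+}$ one has $W=-\mu_{k}^{+}\asymp k^{2}$ lying asymptotically near the positive real axis, so $W^{1/4}$ is essentially real $\asymp|k|^{1/2}$ and $|\sinh(\pi W^{1/4})|\asymp e^{\pi|k|^{1/2}}$ — squared over the two signs of $l$, this is the $e^{2\pi|k|^{1/2}}$ of \eqref{eq:2}. At $z=-i\mu_{k}^{-}$ the same mechanism with $W=-\mu_{k}^{-}\asymp k^{4}$ produces $e^{2\pi|k|}$ from the $\mu^{-}$–product, while the $\mu^{+}$–product contributes $\sin(\pi\sqrt{-\mu_{k}^{-}})$, which, since $\sqrt{-\mu_{k}^{-}}$ has real part $\approx k^{2}\in\z$ (annihilating the oscillatory part) and imaginary part $\approx-k/2$ coming from the subleading $-ik^{3}$ in \eqref{spp}, is $\asymp\sinh(\pi|k|/2)\asymp e^{\pi|k|/2}$, i.e. $e^{\pi|k|}$ over both signs of $l$; altogether $e^{3\pi|k|}$ for \eqref{eq:3}. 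The polynomial powers $|k|^{-3}$, $|k|^{-7}$ account for the $\sqrt{|W|}$ from the model denominators and for the handful of near–resonant indices — $l=-k$ in the $\mu^{+}$–self–product gives $\asymp|k|^{-1}$, and the index $l$ nearest $\sqrt{|k|}$ (resp. $k^{2}$), where a $+$–eigenvalue nearly meets a $-$–eigenvalue, gives $\asymp|k|^{-1/2}$ — together with elementary estimates such as $|\sin(\pi\sqrt{|k|})|\ge c|k|^{-1/2}$, valid when $|k|$ is not a square because then $|\sqrt{|k|}-n|\ge(2\sqrt{|k|})^{-1}$ for every $n\in\z$ (the square case being a genuine resonance treated directly from the second–order expansions).

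\textbf{Main obstacle.} The delicate part is this near–resonant analysis in the lower bounds: one may not freeze the eigenvalues at their leading asymptotics near a resonance (there the per–factor correction ratios are polynomially large, not bounded), so one must isolate a block of indices around each resonance, estimate it with the second–order expansions of $\lambda_{k}^{\pm}$, and verify that the accumulated polynomial loss matches the stated powers of $|k|$ uniformly in $k$. Keeping this bookkeeping honest — and making the model–comparison rigorous in spite of the merely conditional convergence before $k\leftrightarrow-k$ pairing — is where essentially all the effort lies; the exponential rates, by contrast, drop out cleanly once one tracks the real parts of the fourth (and square) roots of $-\mu_{k}^{\pm}$.
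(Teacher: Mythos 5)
Your decomposition $P=P_{1}P_{2}/z$ along the two branches, your change of variable $W=-iz$ with a fourth root to linearize the $\mu^{-}$ branch, and your exponential rate computations (via $\mathrm{Re}$ and $\mathrm{Im}$ of $\sqrt{-\mu_k^\pm}$ and $\sqrt[4]{-\mu_k^\pm}$) all match the paper's. Where you diverge is the mechanism: the paper changes variables to rewrite $P_1,P_2$ in terms of the Hadamard products $Q_1(z)=z\prod(1-z/\widetilde\mu_k^+)$ and $R_1(z)=z\prod(1-z/\widetilde\mu_k^-)$ built from $\widetilde\mu_k^\pm=\mathrm{sgn}(k)(-\mu_k^\pm)^{1/2\text{ or }1/4}=k+O(1)$, invokes Rosier's/Young's/Koosis' sine-type lemma for these, and then transports the two-sided bounds $C_1e^{\pi|y|}\le|Q_1(x+iy)|\le C_2e^{\pi|y|}$ (away from zeros) through the substitution. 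You instead propose a comparison of $P_\pm$ with an explicitly solvable ``frozen'' model obtained by replacing $\mu_k^\pm$ by their leading terms $-k^2,-k^4$, and then controlling the quotient. These are genuinely different routes, not a reformulation of the same one.

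\textbf{The gap.} Freezing the eigenvalues at leading order is not a harmless approximation for the lower bounds \eqref{eq:2}--\eqref{eq:3}, and the near-resonance cleanup you sketch does not close it. Observe from \eqref{mutil} that the ``square-root branch points'' of the true $\mu^{-}$-product are $\widetilde\mu_k^-=k+b_k$ with $b_k\to -\tfrac{i}{4}$: they sit at distance $\approx\tfrac14$ \emph{below} the real axis, uniformly in $k$. Meanwhile the points at which you need lower bounds, $e^{-i\pi/8}\sqrt[4]{-i\mu_k^+}$, approach the real axis (the paper records $|\mathrm{Im}|=O(|k|^{-1/2})$). So in the true product the relevant distance is uniformly bounded below and \eqref{eq:p5lower}--\eqref{P2lowup} apply with no polynomial loss; the $|k|^{-3}$ in \eqref{eq:2} comes \emph{only} from $1/|\mu_k^+|\asymp k^{-2}$ and the Jacobian/prefactor $\asymp 1/|k|$ in \eqref{P_1'}, and likewise $|k|^{-7}$ in \eqref{eq:3} is just $1/|\mu_k^-|\cdot 1/|(-i\mu_k^-)^{3/4}|$. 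In your frozen model, however, the zeros of the $\mu^-$-factor sit exactly at the integers on the real axis, so the distance to $|k|^{1/2}$ degenerates: for $|k|$ a perfect square the model vanishes outright, and for generic $|k|$ the diophantine bound $|\sin(\pi\sqrt{|k|})|\gtrsim|k|^{-1/2}$ you invoke costs an extra factor of $|k|^{-1/2}$, which is not accounted for in the stated exponents. The $-\tfrac{i}{4}$ shift in $b_k$ is thus not a ``subleading correction'' to be bounded in a quotient; it is precisely what makes the lower bounds clean, and your model discards it. This is exactly the step where the paper's choice pays off: by building the sine-type function from the true $\widetilde\mu_k^-$, the uniform gap condition and the lower bound $|R_1(x+iy)|\ge C e^{\pi|y|}$ away from zeros are automatic, and no small-sine or second-order-expansion analysis at near-resonant indices is needed. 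You would need to either carry the full $b_k$ into your model (at which point you have essentially re-derived the sine-type machinery) or redo the polynomial bookkeeping to show that the resonance losses are in fact absorbed, which as sketched they are not.
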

	\begin{proof}For $k\in\z\setminus\{0\}$, we have
		\begin{align*}
		&\mu_k^+=-k^2-ik-iO(|k|^{-1})+O(|k|^{-2}),\quad \text{ as } \abs{k}\to \infty, \\& \mu_k^-= -k^4+ik^3+k^2-iO(|k|^{-1})+O(|k|^{-2}),\quad \text{ as } \abs{k}\to \infty,\\
		&\nonumber\mu_0=\mu^+_0=\mu^-_0=0.
		\end{align*}
		
		\noindent
		We consider $\arg z$ as the principal argument of a complex number $z \in \cplx \setminus \rea^-$, i.e., $\arg z \in (-\pi, \pi)$. So, for $z \in \cplx\setminus\rea^-$, we have
		\begin{align*}
		\log z= \log\abs{z}+ i \arg z,\quad \sqrt{z}=\sqrt{\abs{z}} \, e^{i\frac{\arg z}{2}}, \quad \sqrt[4]{z}=\sqrt[4]{\abs{z}} \, e^{i\frac{\arg z}{4}}. 
		\end{align*}
		For $k\in \z \setminus \{0\},$ we define
		\begin{align}\label{mutil}
		\widetilde \mu_k^+=\text{sgn}(k)\sqrt{-\mu_k^+}=k+ a_k,	\quad\widetilde \mu_k^-=\text{sgn}(k)\sqrt[4]{-\mu_k^-}=k+ b_k,
		\end{align}
		where $a_k=\frac{i}{2}+O(|k|^{-1})+iO(|k|^{-2}), \,\, b_k=-\frac{i}{4}+O(|k|^{-1})+i O(|k|^{-4}) \text{ as }|k|\rightarrow \infty$. 
		
		\noindent
		Let us now define
		\begin{align}
		\label{eq:p1}&P_1(z)=z\prod_{k\in \z\setminus\{0\}}\left(1+\frac{z}{i\mu_k^+}\right),\\
	\nonumber	&Q_1(z)=z \prod_{k\in \z\setminus\{0\}}\left(1-\frac{z}{\widetilde\mu_k^+}\right),\\
		\label{eq:p4}	&Q_2(z)=z^2\prod_{k\in \z\setminus\{0\}}\left(1+\frac{z^2}{\mu_k^+}\right).
		\end{align}
	$P_1$ is an entire function as the convergence of \eqref{eq:p1} is uniform in $z$ on any compact subset of $\cplx.$	 We also have the following relation:
		\begin{align}
		\label{eq:estp3}&Q_2(z)=-Q_1(z)Q_1(-z),\\
		\label{eq:estp1}&	P_1(z)=iQ_2(e^{-\frac{i \pi}{4}} \sqrt{z}).			
		\end{align}
		\begin{lemma}[Young \cite{RY}, Koosis \cite{PK} , Rosier\cite{LR14}]\label{Rosier}
			Let $\mu_k=k+d_k$, where $d_0=0,$ and $d_k=d+ O(k^{-1})$
			as $\abs{k} \to \infty$ for some constant $d \in \cplx$, and that $\mu_k\neq\mu_l$ for $k \neq l.$ Then $f(z)=z\prod_{k\in \z \setminus \{0\}} \left(1-\frac{z}{ \mu_k}\right)$ is an entire function of type sine.
		\end{lemma}
		\noindent
		Note that $\mu_k^+\neq\mu_l^+ , \text{ if } k\neq l \text{ as }\lambda_k^+\neq \lambda^+_l$(see \Cref{rem}) and also for large k, $\widetilde\mu_k^+$ is of the form $k+a_k$.
		Thanks to Lemma \Cref{Rosier}, we get $Q_1$ is an entire function of sine type (in particular, of exponential type $\pi$) and so for any $\epsilon>0$ we have:
		\begin{align}
		\label{bound}\abs{Q_1(z)}\leq C_1e^{\pi\abs{z}},&\:\: \forall z \in \mathbb{C}, \\
		\label{bound1}C_2e^{\pi\abs{y}}\leq \abs{Q_1(x+iy)} \leq C_3e^{\pi \abs{y}},\:\:& \text{if dist}(x+iy,\{\widetilde{\mu}_k^+\}) > \epsilon, \\
		\label{bound2}\abs{Q_1'(\widetilde{\mu}_k^+)}>C_4,&\:\: \forall k \in \z,
		\end{align}
		where $C_1,C_2,C_3,C_4$ are positive constants with $C_2,C_3$ depending on $\epsilon$. 

		\noindent
		Now using the estimates of $Q_1$ and the expression \eqref{eq:estp3}, we have:
		\begin{align}
		\label{q0}\abs{Q_2(z)}\leq C_1^2e^{2\pi\abs{z}},&\:\: \forall z \in \mathbb{C}, \\
		\label{q1} C_2^2e^{2\pi\abs{y}}\leq \abs{Q_2(x+iy)} \leq C_3^2e^{2\pi \abs{y}},\:\:& \text{if dist}\left(\pm(x+iy),\{\widetilde{\mu}_k^+\}_{k\in\z}\right)> \epsilon.
		\end{align}	Substituting $e^{-i\frac{\pi}{4}}\sqrt{z}$ in the place of $z=x+iy$ in \eqref{q0} and \eqref{q1}, respectively we obtain
		\begin{align}
		\label{p_1} |P_1(z)|&\leq C_1 e^{2\pi \sqrt{|z|}}, \forall z\in \cplx,\\
		\label{eq:p1imag} 
		C_2^2e^{2\pi\abs{\Im\left(e^{-i\frac{\pi}{4}}\sqrt{z}\right)}}\leq \abs{P_1(z)} &\leq  C_3^2e^{2\pi\abs{\Im\left(e^{\frac{-i\pi}{4}}\sqrt{z}\right)}},\:\:\text{if dist}\left(\pm \left(e^{-i\frac{\pi}{4}}\sqrt{z}\right),\{\widetilde{\mu}_k^+\}_{k\in\z}\right) > \epsilon.
		\end{align}
		Note that the estimate \eqref{eq:p1imag} holds true for large $x\in\rea$, and also one can get the estimate in a compact set using continuity of $P_1$. Thus combining both the facts, we get:
		\begin{equation}\label{eq:p1real}
		C_2^2e^{\sqrt{2}\pi \sqrt{\abs{x}}}\leq\abs{P_1(x)}\leq C_3^2e^{\sqrt{2}\pi \sqrt{\abs{x}}}, \quad\forall x\in\rea.
		\end{equation}
		To find the estimate of the last product term of the canonical product $P$, we define
		\begin{align}
		\label{eq:p2}&P_2(z)=z \prod_{k\in \z\setminus\{0\}}\left(1+\frac{z}{i\mu_k^-}\right),\\
		\label{eq:p6}&R_1(z)=z \prod_{k\in \z\setminus\{0\}}\left(1-\frac{z}{\widetilde \mu_k^-}\right),\\
		\label{eq:p5}&R_2(z)=z^4\prod_{k\in \z\setminus\{0\}}\left(1+\frac{z^4}{\mu_k^-}\right).
		\end{align}
		Hence we can write the following
		\begin{align}
		\label{eq:1p5}&R_2(z)=-R_1(z)R_1(-z)R_1(iz)R_1(-iz),\\
		\label{eq:1p2}	&P_2(z)=iR_2(e^{-i\frac{\pi}{8}} \sqrt[4]{z}).
		\end{align}		
		Note that $\widetilde \mu_k^-\neq\widetilde \mu_l^-\text{ if }k\neq l \text{ as } \lambda_k^-\neq\lambda_l^- $(see \Cref{rem}) and also for large $k$, $\widetilde \mu_k^-$ is of the form $k+b_k $. Thus, $R_1$ is an entire function of sine type, thanks to \Cref{Rosier} again, and so for any $\epsilon>0$ there exists positive constants $C_5,C_6,C_7,C_8$, where $C_6,C_7$ depends on $\epsilon$ such that:
		\begin{align}
		\nonumber&|R_1(z)|\leq C_5 e^{\pi |z|},\forall z\in \cplx,\\
		\label{r2}	C_6e^{\pi \abs{y}} \leq &\abs{R_1(x+iy)} \leq C_7 	e^{\pi \abs{y}}, \text{ if } \dist\left(x+iy, \{\widetilde \mu_k^-\}\right)> \epsilon,\\
		\label{r22}	& \, \abs{R_1'(\widetilde \mu_k^-)} \geq C_8, \,\ 	k\in \z.
		\end{align} 
		Using these estimates of $R_1$ and the expression \eqref{eq:1p5} we get
		\begin{align}
		\nonumber	&\quad \quad \quad \quad \quad \quad \quad\quad 	\quad \quad\abs{R_2(z)}\leq C_5^4 e^{4\pi \abs{z}},\forall z \in \cplx\\
		\label{eq:p5lower}&C_6^4 e^{2\pi\left(\abs{x}+\abs{y}\right)}\leq \abs{R_2(x+iy)}\leq C_7^4 e^{2\pi\left(\abs{x}+\abs{y}\right)},\text{ if } \dist(\{\pm(x+iy), \pm(-y+ix)\}, \{\widetilde \mu_k^-\})> \epsilon. 
		\end{align} 
		At last using the relation \eqref{eq:1p2} we get the following bound for $P_2$ 
		\begin{align}
		&\label{eq:p2est3}\quad \quad \quad \quad \quad \quad \quad\quad \quad \quad|P_2(z)|\leq C_5^4e^{4\pi\sqrt[4]{|z|}}, \forall z \in \cplx,\\
		&\label{P2lowup}C_6^4e^{2\pi\left(|\Re(e^{-i\frac{\pi}{8}} \sqrt[4]{z})|+|\Im(e^{-i\frac{\pi}{8}} \sqrt[4]{z})|\right)}\leq 	\abs{P_2(z)}\leq C_7^4e^{2\pi\left(|\Re(e^{-i\frac{\pi}{8}} \sqrt[4]{z})|+|\Im(e^{-i\frac{\pi}{8}} \sqrt[4]{z})|\right)},
		\end{align}
		provided $\dist\left(\{\pm e^{-i\frac{\pi}{8}}\sqrt[4]{z},\pm i e^{-i\frac{\pi}{8}}\sqrt[4]{z}\} \,, \{\widetilde \mu_k^-\} \right)> \epsilon.$
		Similar to the case of $P_1$, using inequality \eqref{P2lowup} and the continuity of $P_2$, we get the following bound of $P_2$ on $\rea$:
		\begin{equation}\label{eq:p2real}
		\abs{P_2(x)}\leq C e^{2\pi\left(|\cos(\frac{\pi}{8})|+\sin(\frac{\pi}{8})|\right)\abs{x}^{\frac{1}{4}}}\leq C e^{2\sqrt{2}\pi\abs{x}^{\frac{1}{4}}}.
		\end{equation}
		
		\noindent
		To get the estimate on $P$, first note that $P$ satisfies the relation: \begin{align}\label{P}
		P(z)=\frac{P_1(z)P_2(z)}{z}, \forall z\in\cplx.
		\end{align}
		Thus, using \eqref{eq:p1}, \eqref{eq:p2est3} and the above relation, we get:
		\begin{align*}
		\abs{P(z)}\leq \frac{C}{|z|} e^{2\pi \sqrt{\abs{z}}} e^{4\pi {\sqrt[4]{|z|}}}\leq C e^{\tl \epsilon |z|}\frac{1}{|z|}e^{2\pi \sqrt{\abs{z}}+4\pi {\sqrt[4]{|z|}}-\tl \epsilon|z|}&\leq C(\tl \epsilon) e^{\tl \epsilon |z|},\,z\in\cplx,
		\end{align*} where $\tl \epsilon>0$ is any number.
		So, $P$ is an entire function of exponential type at most 0.
		
		\noindent
		Also, combining \eqref{eq:p1real}, \eqref{eq:p2real} and \eqref{P} we obtain \eqref{eq:1}.
		
		\noindent
		We now establish the estimates \eqref{eq:2}, \eqref{eq:3}.
		Performing differentiation on \eqref{P} we get 
		\begin{align}\label{eq:pprime}
		P'(z)=\frac{P_1'(z)P_2(z)}{z}+\frac{P_1(z)P_2'(z)}{z}-\frac{P_1(z)P_2(z)}{z^2}.
		\end{align}
	Using the fact $P_1(-i\mu_k^+)=0,$ we infer from  \eqref{eq:pprime} \begin{align}\label{pprime}
		P'(-i\mu_k^+)=\frac{P_1'(-i\mu_k^+)P_2(-i\mu_k^+)}{-i\mu_k^+}.
	\end{align}
		From estimates  \eqref{eq:estp3}, \eqref{eq:estp1} we further have
		\begin{align}\label{pprime1}
		P_1'(z)=\frac{i e^{-\frac{i \pi}{4}}}{2\sqrt{z}}\bigg[-Q_1'(e^{-\frac{i \pi}{4}}\sqrt{z})Q_1(-e^{-\frac{i \pi}{4}}\sqrt{z})+Q_1(e^{-\frac{i \pi}{4}}\sqrt{z})Q_1'(-e^{-\frac{i \pi}{4}}\sqrt{z})\bigg].
		\end{align}
		
		\noindent
		Note that
		$\text{sgn}(k)e^{-\frac{i \pi}{4}}\sqrt{-i\mu_k^+}= \, \widetilde \mu_k^+$ are zeros of $Q_1$ and so we have
		\begin{align*}
		P_1'(-i\mu_k^+)&=-\frac{1}{2\tl\mu_k^+}\bigg[Q_1'\left(\widetilde \mu_k^+ \right)Q_1\left(-\widetilde \mu_k^+\right)\bigg].
		\end{align*}
	From the asymptotic expression of $\widetilde\mu_k^+$ \eqref{mutil}, it is clear that there exist a $\delta_1>0$ such that $|\widetilde \mu_k^+ +\widetilde \mu_l^+|>\delta_1$ for large $k$ and $l$. For the remaining finite number of $k,l\in\z$, if $k\neq l$ we know that $\widetilde\mu_k^+\neq -\widetilde\mu_l^+$ as $\lambda_k^+\neq \lambda_l^+$(see \cref{fig2}), and if $k=l\neq 0$ then obviously $|\widetilde \mu_k^+ +\widetilde \mu_l^+|>\delta_2$ for some $\delta_2>0$. Thus, combining all these facts we get  existence of a $\delta>0$ such that $|\widetilde \mu_k^+ +\widetilde \mu_l^+|>\delta$ for $k\in\z\setminus\{0\},\, l\in\z$ and so using \eqref{bound1} in the above relation, we get
	\begin{align}\label{P_1'}
		|P_1'(-i\mu_k^+)|\geq C\frac{1}{|k|},\,\forall k\in\z\setminus\{0\}.\end{align}
	Next we estimate $\abs{P_2(-i\mu_k^+)}$ using \eqref{P2lowup} to get the lower bound of $|P'(-i\mu_k^+)|$.
	Let us first compute
	\begin{align*}
	\bigg|\Re\left(e^{-i\frac{\pi}{8}}\sqrt[4]{-i \mu_k^+}\right)\bigg|&=|k|^{1/2}+O(|k|^{-2}), \text{ as }|k|\rightarrow\infty, \text{ and }\\
	\bigg|\Im\left(e^{-i\frac{\pi}{8}}\sqrt[4]{-i \mu_k^+}\right)\bigg|&=O(|k|^{-1/2}), \text{ as }|k|\rightarrow\infty.
	\end{align*}
	Note that $\pm\sqrt[4]{-\mu_k^+}\neq \widetilde\mu_l^-$ and $\pm i\sqrt[4]{-\mu_k^+}\neq \widetilde\mu_l^-$ as $\lambda_k^+\neq \lambda_l^-$ for $k\in\z\setminus\{0\},l\in\z$, and also asymptotically the gap between $\{\pm\sqrt[4]{-\mu_k^+},\pm i\sqrt[4]{-\mu_k^+}\}$ and $\widetilde\mu_l^-$ increases as $k,l$ increases.
	Thus, there exist a $\delta>0$ such that dist$\left(\{\pm\widetilde\mu_k^+,\pm i\widetilde\mu_k^+\}_{k\in\z\setminus\{0\}},\{\widetilde\mu_l^-\}_{l\in\z}\right)>0$ and so by \eqref{P2lowup} we get a $C>0$ such that
		\begin{align}\label{P_2}
		\abs{P_2(-i\mu_k^+)}&\geq Ce^{2\pi\left(|\Re(e^{-i\frac{\pi}{8}} \sqrt[4]{-i \mu_k^+})|+|\Im(e^{-i\frac{\pi}{8}} \sqrt[4]{-i \mu_k^+})|\right)} \geq C e^{2\pi|k|^{\frac{1}{2}}}.
		\end{align}
		Using the bounds \eqref{P_1'},\eqref{P_2} in \eqref{pprime}, we get: \begin{align*}
		|P'(-i\mu_k^+)|\geq C\frac{1}{|\mu_k^+|}|k|^{-1}e^{2\pi|k|^{\frac{1}{2}}}\geq C |k|^{-3}e^{2\pi|k|^{\frac{1}{2}}}.
		\end{align*}
		
		Now, from \eqref{eq:pprime} we have   \begin{align}\label{p pr ex}
		P'(-i\mu_k^-)=\frac{P_1(-i\mu_k^-)P_2'(-i\mu_k^-)}{-i\mu_k^-}.
		\end{align}
		Recall that for $k\in\z\setminus\{0\},\text{ sgn}(k)e^{-\frac{i \pi}{8}}\sqrt[4]{-i\mu_k^-}=\widetilde \mu_k^-$ are zeros of $R_1$, and so using the relations \eqref{eq:1p5} and \eqref{eq:1p2}, we get: 
		\begin{align*}
		|P_2'(-i\mu_k^-)|&=\frac{|i e^{-\frac{i \pi}{8}}|}{|4\sqrt[3/4]{-i\mu_k^-}|}|\,\text{sgn}(k)R_1'(\widetilde \mu_k^-)R_1(i \widetilde \mu_k^-)R_1(-\widetilde \mu_k^-)R_1(-i \widetilde \mu_k^-)
		|.
		\end{align*}
 Arguing in the same way as done earlier to get the estimate \eqref{P_1'}, we get existence of a $\delta>0$ such that ${|\widetilde \mu_m^-+ \widetilde \mu_k^-|>\delta}, |\widetilde \mu_m^-+i \widetilde \mu_k^-|>\delta, | \widetilde \mu_m^--i\widetilde \mu_l^-|>\delta,$ for all $k \in \zahl \setminus \{0\}, \, m \in \zahl.$ 
 	Thus, using the bounds \eqref{r2} and \eqref{r22} in the above relation, we get
 	\begin{align}\label{p2 prime exp}
 		|P_2'(-i\mu_k^-)| 		&\geq C \frac{1}{|4\sqrt[3/4]{-i\mu_k^-}|}e^{2\pi |k|}.
 	\end{align}
		It remains to estimate the term $|P_1(-i\mu_k^-)|$. Using the argument used to obtain \eqref{P_2}, we get existence of a $\delta>0$ such that
		\begin{align}\label{2} \bigg|\left(e^{\frac{-i\pi}{4}}\sqrt{-i\mu_k^-}\right)-\widetilde{\mu}_l^+ \bigg|> \delta , \text{ for all } k \in \zahl \setminus \{0\}, \, l \in \zahl. \end{align}
	By \eqref{eq:p1imag}, we have 
		\begin{align}\label{p1 exp}
		|P_1(-i\mu_k^-)|\geq C e^{2\pi\abs{\text{Im}\left(e^{\frac{-i\pi}{4}}\sqrt{-i\mu_k^-}\right)}} =Ce^{2\pi\abs{\text{Im}\left(\sqrt{-\mu_k^-}\right)}}.
		\end{align} 
Hence, using the bounds \eqref{p2 prime exp} and \eqref{p1 exp} in the absolute expression of \eqref{p pr ex}  and noting the fact that $|\Im(\sqrt{-\mu_k^-})|=\frac{|k|}{2}+O(|k|^{-1})$, one can write the final estimate as
		\begin{align}\label{3}
		\nonumber|P'(-i\mu_k^-)|&\geq C \frac{1}{|4\sqrt[7/4]{-i\mu_k^-}|}e^{2\pi |k|}e^{\pi |k|}\\
	%	&\geq C \frac{1}{\sqrt[7/4]{|\mu_k^-|}}e^{3\pi |k|}\\
		&\geq C |k|^{-7}e^{3\pi |k|},
		\end{align}
		which proves \eqref{eq:3}.
	\end{proof}
	\subsection{Construction of the multiplier}
	In this section, we construct two Beurling and Malliavin's multipliers $M_1$ and $M_2$ (see \cite{BM} and \cite{PK1} for more details) which compensates the exponential growth of $P_1$ and $P_2$, respectively on $\rea$. More precisely, we construct a multiplier $M=M_1 M_2$ such that $P(x)M(x)$ is bounded on the real line. Here we mainly follow the works \cite{AM21} for finding $M_2$,  and \cite{OG} and \cite{LR14} for $M_1$.   
	
Define $a_1=a_2=a=\frac{T}{4\pi}>0$, $b_1=\sqrt{2}>0$ and $b_2=\frac{2\sqrt{2}\sqrt{\pi}}{\cot(\frac{\pi}{8})} \frac{\Gamma\left(\frac{9}{8}\right)}{\Gamma\left(\frac{5}{8}\right)}>0$. Let us first construct the multiplier $M_2$.
	\begin{proposition}[Multiplier $M_2$]\label{m prop}
		There exists an entire functions $M_2$ of exponential type at most $a\pi$ which satisfies the following
		\begin{align}
		&\label{m_1bound1}	|M_2(x)|\leq C_1\, |x|\, e^{-2\sqrt{2}\pi|x|^\frac{1}{4}}, \\
		&\label{m_1bound2} |M_2(-i\mu_k^+)|\geq C_2\, e^{\pi a |k|^2-4(\sqrt{2}+1)\pi|k|}, k\in\z\setminus\{0\},\\
		&\label{m_1bound3}|M_2(-i\mu_k^-)| \geq C_3\, e^{\pi a k^4 -c|k|}, k\in\z\setminus\{0\},
		\end{align}
		for some $c,\,C_1,C_2,C_3>0$.
	\end{proposition}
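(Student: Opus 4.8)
The plan is to produce $M_2$ as a Beurling--Malliavin type multiplier. I would look for it in the form of a canonical product
\[
M_2(z)=z\prod_{j}\Bigl(1-\frac{z^{2}}{s_{j}^{2}}\Bigr),
\]
whose real zeros $\{s_j\}$ have overall linear density $a=\tfrac{T}{4\pi}$ per unit length --- this is what pins the exponential type at $a\pi$, realised along the imaginary axis, while along $\rea$ the corresponding ``full'' sine-type factor is only logarithmically large --- but from which a sparse subsequence, of counting function comparable to $c\,t^{1/4}$, has been deleted. The deleted atoms are what create the decay on $\rea$: removing zeros near, but below, a given $x$ turns the corresponding factors into large numbers, so $\log|M_2(x)|$ picks up a large negative contribution. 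The density $c\,t^{1/4}$ of the deletions is tuned --- and this is exactly where the value $b_2=\tfrac{2\sqrt2\sqrt\pi}{\cot(\pi/8)}\,\tfrac{\Gamma(9/8)}{\Gamma(5/8)}$ is forced --- so that this contribution equals $-2\sqrt2\,\pi|x|^{1/4}$ up to $O(\log|x|)$, which is precisely what is needed to absorb the growth \eqref{eq:p2real} of $P_2$.

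Concretely I would proceed in three steps. \emph{Step 1: the function.} Fix the lattice $\{m/a:m\in\z\setminus\{0\}\}$, delete a subsequence with counting function $\sim c\,t^{1/4}$, and let $M_2$ be the canonical product above over the surviving zeros together with one extra factor $z$ (which accounts for the $|x|$ in \eqref{m_1bound1}). Since the surviving zeros still have density $a$, the same estimates used above for the sine-type functions $Q_1$ and $R_1$ give that $M_2$ is entire with $|M_2(z)|\le C e^{a\pi|z|}$, i.e.\ of exponential type at most $a\pi$. \emph{Step 2: the bound on $\rea$.} Write $\log|M_2(x)|$ as the logarithm of the full sine-type factor, which is $O(\log|x|)$ on $\rea$, minus the logarithmic transform $\int_0^\infty\log|1-x^2/t^2|\,d\nu(t)$ of the deleted atoms, $\nu(t)\sim c\,t^{1/4}$. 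Evaluating this improper integral --- a Mellin transform of $u\mapsto\log|1-u^{-2}|$ at the exponent $\tfrac14$, which is exactly the computation carried out in Appendix~B and which produces the $\cot(\pi/8)$ and Gamma factors hidden in $b_2$ --- yields $-2\sqrt2\,\pi|x|^{1/4}+O(\log|x|)$, hence \eqref{m_1bound1}. \emph{Step 3: the lower bounds.} Using $-i\mu_k^{+}=-k+ik^{2}+O(|k|^{-1})$ and $-i\mu_k^{-}=k^{3}+i(k^{4}-k^{2})+O(|k|^{-1})$, these points lie at distance $\gtrsim k^{2}$, resp.\ $\gtrsim k^{4}$, from every zero of $M_2$, so the full sine-type factor alone gives $|M_2(-i\mu_k^{\pm})|\ge c\,e^{a\pi|\Im(-i\mu_k^{\pm})|}$, i.e.\ $\gtrsim e^{\pi a k^{2}}$ and $\gtrsim e^{\pi a k^{4}}$ respectively; the residual contribution of the deleted atoms, evaluated at these points, is bounded below by $e^{-4(\sqrt2+1)\pi|k|}$, resp.\ $e^{-c|k|}$, by isolating the finitely many deleted zeros of modulus $\lesssim|{-}i\mu_k^{\pm}|$, estimating the resulting finite sub-product and carrying along the subprincipal terms in the asymptotics of $\mu_k^{\pm}$. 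This gives \eqref{m_1bound2} and \eqref{m_1bound3}.

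The main obstacle is getting the \emph{sharp} constant in \eqref{m_1bound1}: the deletion density must be chosen so that, simultaneously, the exponential type stays $\le a\pi$, the logarithmic transform of the deleted atoms equals $-2\sqrt2\,\pi|x|^{1/4}$ \emph{exactly} (not merely up to a multiplicative constant), and there is still enough mass left near the points $-i\mu_k^{\pm}$ for the lower bounds to hold. Balancing the second requirement against the other two is what forces the precise value of $b_2$ and hence the delicate Gamma-function evaluation of the relevant improper integral; once that identity is available, the remaining verifications are routine manipulations with sine-type estimates and the known asymptotics of $\mu_k^{\pm}$.
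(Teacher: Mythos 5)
Your overall plan — build $M_2$ as a canonical product with real zero counting function $\approx a t - b_2 t^{1/4}$, so that the density $a$ pins the exponential type at $a\pi$ while the $t^{1/4}$ deficit produces decay $e^{-c|x|^{1/4}}$ on $\rea$ — is exactly the Beurling--Malliavin strategy the paper follows (it uses the measure $\nu_2(t)=at-b_2t^{1/4}$, atomizes it, and sets $M_2(z)=\exp(\tilde g_2(z-i))$). So the high-level construction matches. However, your account of the estimates departs from the paper's in ways that hide the technical work, and in a couple of places it is simply off.

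First, you attribute the Mellin-type evaluation $\int_0^\infty\log|1-x^2/t^2|\,dt^{1/4}=|x|^{1/4}\pi\cot(\pi/8)$ to Appendix~B. It is not: Appendix~B computes the Beta-function integral $\int_{-\infty}^\infty |s|^{1/4}(1+|x-s|^2)^{-1}\,ds$, which enters only at the stage of the \emph{Poisson representation} of $U_2$ on $\cplx\setminus\rea$ (Lemma~6.6). The $\cot(\pi/8)$ identity you want is quoted from \cite{AM21}, and it governs $U_2$ on $\rea$ (Lemma~6.5). These are two distinct ingredients and the paper needs both.

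Second, you do not mention the shift $z\mapsto z-i$ in the definition of $M_2$. This is not cosmetic. The paper's route to the pointwise bounds is to control the atomization error $\tilde U_2-U_2$ by Lemma~6.7, whose bound contains the terms $\log^+(|x|/|y|)$ and $\log^+((x^2+y^2)/B^2)$; the first blows up as $y\to0$, so one cannot estimate $M_2$ on $\rea$ without evaluating at $x-i$ rather than at $x$. In your $M_2=F/D$ picture, the corresponding difficulty is that $D$ has zeros on $\rea$ (the deleted lattice points), so $|D(x)|$ is not bounded below pointwise, and the bound $|M_2(x)|=|F(x)|/|D(x)|\le\dots$ degenerates there; you would have to argue via removable-singularity/L'H\^opital estimates at each deleted point, or shift off the axis as the paper does. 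As written, Step~2 glosses over this.

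Third, your Step~3 replaces the paper's uniform lower estimate on $\tilde U_2(z)$ (the combination of Lemmas~6.5, 6.6, 6.7 yielding the two-sided bound \eqref{tl U_2}, then evaluated at $z=-i\mu_k^\pm-i$) by an ad~hoc ``isolate the finitely many deleted zeros of modulus $\lesssim|{-}i\mu_k^\pm|$'' argument. That sub-product has $\sim b_2|k|^{1/2}$ (resp.\ $\sim b_2|k|$) factors, each contributing $O(\log|k|)$, so your bound $e^{-4(\sqrt2+1)\pi|k|}$ is attainable but not for the reason you state; the precise bookkeeping is exactly what \eqref{tl U_2} packages cleanly, and without it you would have to redo all of that estimation by hand, being careful about the arguments (not just moduli) of $(-i\mu_k^\pm)^2/s_j^2$. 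Finally, a small numerical point: the proof actually delivers $|M_2(x)|\le C|x|e^{-4\sqrt2\pi|x|^{1/4}}$, stronger than the stated $e^{-2\sqrt2\pi|x|^{1/4}}$; $b_2$ is tuned so the \emph{Poisson-smoothed} bound gives $-4\sqrt2\pi(x^2+y^2)^{1/8}$, not so that the raw deleted-atom contribution equals $-2\sqrt2\pi|x|^{1/4}$ as you assert.

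In short: you picked the right object, but you have swapped the roles of the two key integrals, dropped the shift that makes the pointwise bounds legitimate, and sketched the lower bounds at a level that would need the paper's Lemma~6.5--6.7 machinery to actually be carried out.
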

	\begin{proof}We split the proof into two parts. At first we give construction of the multiplier, $M_2$ following \cite{AM21}. In the second part, we obtain some additional estimates on $M_2$ to get the desired estimates at the points $i\mu_k^+$ and $i\mu_k^-$, which is crucial for the construction of required biorthogonal family.
		
		\noindent
\textbf{Construction of $M_2$.}
		Let us consider the function defined on the ray $(0, \infty)$ as
		\begin{align*}
		s_2(t)=at-b_2 t^{\frac{1}{4}}, \hspace{5 mm} t>0.
		\end{align*}
		For $\gamma \in (0,2)$, we have (see \cite{AM21}):
		\begin{align*}
		\int_{0}^{\infty} \log\left|1-\frac{x^2}{t^2}\right|\, dt^{\gamma} = |x|^{\gamma} \pi \cot \frac{\pi \gamma}{2}, \, \forall x \in \rea.
		\end{align*}
	Thus, the above expression and the definition of $s_2$ imply the following
		\begin{align}\label{eq:log}
		\int_{0}^{\infty} \log\left|1-\frac{x^2}{t^2}\right|\, ds_2(t) = -b_2 |x|^{\frac{1}{4}}\pi \cot\left(\frac{\pi}{8}\right) = -2\sqrt{2}\pi |x|^{\frac{1}{2}},\, \forall x \in \rea.
		\end{align}
		Note that:
		\begin{itemize}
			\item $s_2(B_2)=0$ for $B_2=\left(\frac{b_2}{a}\right)^{\frac{4}{3}}$,
			\item $s_2(t)$ is increasing for $t>\left(\frac{1}{4}\right)^\frac{4}{3}B_2$.
		\end{itemize}
		Next we define 
		\begin{align*}
		\nu_2(t)=\begin{cases}
		0, & t\leq B_2,\\
		s_2(t), & t\geq B_2.
		\end{cases}
		\end{align*}
		Then, $\nu_2$ is non-negative and non-decreasing function. Moreover, $0\leq \nu_2(t) \leq at$ for $t>0$. 
		Let us define 
		\begin{align}
	\nonumber	g_2(z)&=\int_{0}^{\infty} \log\left(1-\frac{z^2}{t^2}\right)\, d\nu_2(t) = \int_{B_2}^{\infty} \log\left(1-\frac{z^2}{t^2}\right)\, ds_2(t), \hspace{2mm}  z\in \cplx \setminus \rea, \\
	\label{u2}	U_2(z)&= \int_{0}^{\infty} \log\left|1-\frac{z^2}{t^2}\right|\, d\nu_2(t) = \int_{B_2}^{\infty} \log\left|1-\frac{z^2}{t^2}\right|\, ds_2(t), \hspace{2mm}  z\in \cplx .
		\end{align}
		Note that 
		\begin{itemize}
			\item $g_2$ is holomorphic function on $\cplx\setminus\rea$ and $U_2$ is continuous on $\cplx.$
			\item $U_2(z)=\Re(g_2(z))$ for $z\in \cplx\setminus\rea$ as $\log(z)=\log|z|+i \arg(z)$.
			\item If we atomize the measure $d\nu_2$(i.e., to make the measure integer valued), then the integral $g_2$ would become logarithm of an entire function of exponential type and hence taking exponential of $g_2$ would give us entire function of exponential type.
		\end{itemize}
		Thus, let us now define 
		\begin{align*}
		\tilde{g}_2(z)&=\int_{0}^{\infty} \log\left(1-\frac{z^2}{t^2}\right)\, d[\nu_2(t)] = \int_{B_2}^{\infty} \log\left(1-\frac{z^2}{t^2}\right)\, d[s_2(t)], \hspace{2mm} & z\in \cplx \setminus \rea \\
		\tilde{U}_2(z)&= \int_{0}^{\infty} \log\left|1-\frac{z^2}{t^2}\right|\, d[\nu_2(t)] = \int_{B_2}^{\infty} \log\left|1-\frac{z^2}{t^2}\right|\, d[s_2(t)], \hspace{2mm} & z\in \cplx,
		\end{align*}
	where the notation $[x]$ is for the integral part of $x$. As above, $\tl{g}$ is holomorphic function on $\cplx\setminus\rea$ and $\tl U$ is continuous on $\cplx.$
		Thus we can write
		\begin{align*}
		\exp(\tilde{g}_2(z))=\prod_{k\in \nat}\left(1-\frac{z^2}{\tau_k^2}\right), \text{ for }z \in \cplx\setminus\rea,
		\end{align*}
		where $\tau_k$'s are th point of discontinuity of the map $t \to[v(t)]$.
		
	\noindent	
	Let us define multiplier $M_2$ as $M_2(z)=\exp(\tilde{g}_2(z-i))$. So, for $z\in\cplx\setminus\rea$ we have $|M_2(z)|=\exp(\tilde{U}_2(z-i))$ as $\tilde{U}_2(z-i)=\Re(\tilde{g}_2(z-i))$.
	
		\noindent	
		\textbf{Estimates on the multiplier $M_2$.}
		\noindent
		Here we find upper and lower bound for $\tilde{U}_2$. This estimation is done in two steps:
		
		\noindent
		\emph{Step 1.} Estimate on $U_2(z)$.
		\begin{lemma}[see \cite{AM21}]\label{lemma glass}
			Let $U_2$ be defined by \eqref{u2}. Then there exists $\theta\in L^{\infty}(\rea^+)$ such that the following identity holds
			\begin{align}
				\label{u rea}	&U_2(x)+b_2\,\pi\, \cot\left(\frac{\pi}{8}\right)\sqrt[4]{|x|}=-aB_2\theta\left(\frac{|x|}{B_2}\right), \,  x \in \rea.
			\end{align}
		\end{lemma}
		For the sake of completeness we give a sketch of the proof of this lemma in Appendix A (\Cref{app}).
		\begin{lemma}[see \cite{AM21}, Lemma 17] 
			The function $U_2$ defined in \eqref{u2} satisfies the following relation
			\begin{align}\label{u cplx}	
				U_2(z)=|\Im(z)|\left(\pi a + \frac{1}{\pi}\int_{-\infty}^{\infty}\frac{U_2(t)}{|z-t|^2}\right), \, z\in \cplx\setminus\rea.
			\end{align}
		\end{lemma}
Without loss of generality, we assume $\Im(z)>0$. Then plugging \eqref{u rea} in the second term of the right hand side of \eqref{u cplx} we obtain
	\begin{align}\label{int_term}
		\frac{y}{\pi}\int_{-\infty}^{\infty}\frac{U_2(t)}{|z-t|^2}\, dt=-\frac{y}{\pi}\int_{-\infty}^{\infty}\frac{b_2\,\pi\, \cot\left(\frac{\pi}{8}\right)}{(x-t)^2+y^2}|t|^{\frac{1}{4}}\, dt-\frac{y}{\pi}a B_2 \int_{-\infty}^{\infty}\frac{\theta\left(\frac{|t|}{B_2}\right)}{(x-t)^2+y^2}\, dt.
	\end{align}
	Then the second term of \eqref{int_term} can be estimated as
	\begin{align}\label{term1}
		\left|\frac{y}{\pi}aB_2 \int_{-\infty}^{\infty}\frac{\theta\left(\frac{|t|}{B_2}\right)}{(x-t)^2+y^2}\, dt\right|\leq ||\theta||_{\infty} aB_2\frac{y}{\pi} \int_{-\infty}^{\infty}\frac{ds}{y\left(\left(\frac{x}{y}-s\right)^2+1\right)}=aB_2||\theta||_{\infty}.
	\end{align}
		Next, we write the first term of \eqref{int_term} as following
		\begin{align*}
		\frac{y}{\pi}\int_{-\infty}^{\infty}\frac{(-b_2\,\pi) \cot\left(\frac{\pi}{8}\right)}{(x-t)^2+y^2}|t|^{\frac{1}{4}}\, dt &= -b_2\, y\, \cot\left(\frac{\pi}{8}\right) \int_{-\infty}^{\infty}\frac{|y|^{\frac{1}{4}} |s|^{\frac{1}{4}}}{(x-ys)^2+y^2}\,y\,ds\nonumber \\ 
		&=-b_2\,\cot\left(\frac{\pi}{8}\right) y^{\frac{1}{4}}\int_{-\infty}^{\infty}\frac{|s|^{\frac{1}{4}}}{\left(\frac{x}{y}-s\right)^2+1} \,ds.
		\end{align*}
	An elementary analysis will give us the following inequality (see Appendix B (\Cref{secinq}) for details)
	\begin{align}\label{ineq}
		2\sqrt{\pi} (1+x^2)^{\frac{1}{8}}  \,    \frac{\Gamma\left(\frac{5}{8}\right)}{\Gamma\left(\frac{9}{8}\right)}\leq\int_{-\infty}^{\infty}\frac{|s|^{\frac{1}{4}}}{1+|x-s|^2}\,ds\leq 2\sqrt{\pi} (1+x^2)^{\frac{1}{8}} \,    \frac{\Gamma\left(\frac{3}{8}\right)}{\Gamma\left(\frac{7}{8}\right)}.
	\end{align}
Using \eqref{term1} and \eqref{ineq} in \eqref{int_term}, we obtain
\begin{align*} 
	-b_2\,\cot\left(\frac{\pi}{8}\right) \frac{\Gamma\left(\frac{3}{8}\right)}{\Gamma\left(\frac{7}{8}\right)} 2\sqrt{\pi} (y^2+x^2)^{\frac{1}{8}}-&aB_2||\theta||_{\infty} \leq \frac{y}{\pi}\int_{\infty}^{\infty}\frac{U_2(t)}{|z-t|^2}\, dt\ \nonumber\\ & \leq -b_2\, \cot\left(\frac{\pi}{8}\right) \frac{\Gamma\left(\frac{5}{8}\right)}{\Gamma\left(\frac{9}{8}\right)} 2 \sqrt{\pi} (y^2+x^2)^{\frac{1}{8}} +aB_2||\theta||_{\infty}.
\end{align*}  
Finally using the definition of $U_2(z)$ and the fact that $b_2 \,\cot\left(\frac{\pi}{8}\right) \frac{\Gamma\left(\frac{5}{8}\right)}{\Gamma\left(\frac{9}{8}\right)}=2\sqrt{2}\sqrt{\pi}$, we get
\begin{align}\label{U_2 bound}
	-4\sqrt{2}\pi \frac{\Gamma\left(\frac{3}{8}\right)}{\Gamma\left(\frac{7}{8}\right)}\frac{\Gamma\left(\frac{9}{8}\right)}{\Gamma\left(\frac{5}{8}\right)}(y^2+x^2)^{\frac{1}{8}}-aB_2||\theta||_{\infty} &\leq U_2(z)-\pi a|y|\nonumber\\ & \leq -4\sqrt{2}\pi (y^2+x^2)^{\frac{1}{8}} +aB_2||\theta||_{\infty}.
\end{align}
\emph{Step2.} Bound for $\tilde{U}_2$.

\noindent
We first find the bound on $\tl U_2- U_2$. This estimate when combined with that of $U_2$ will give the required bound for $\tl U_2$ and hence the desired estimate of the multiplier $M_2$. 

			\begin{lemma}[see \cite{LR14}, Lemma 3.8]\label{lem}
			Let $\nu:\rea^+\rightarrow \rea^+$ be nondecreasing and null on $(0,B)$. Then, for $z=x+iy \in \cplx\setminus \rea$, we have:
			\begin{align*}
				-\log^+\frac{|x|}{|y|}-\log^+\frac{x^2+y^2}{B^2}-\log2 \leq \tilde{U}(x+iy)-U(x+iy)\leq \log^+\frac{|x|}{|y|}.
			\end{align*} 
		\end{lemma}
		Clearly, $\nu_2$ satisfies the hypothesis of above lemma, and so we have:
		\begin{align}\label{u2-}
			-\log^+\frac{|x|}{|y|}-\log^+\frac{x^2+y^2}{B_2^2}-\log2 \leq \tilde{U}_2(x+iy)-U_2(x+iy)\leq \log^+\frac{|x|}{|y|}.
		\end{align} 
		On combining the estimates \eqref{U_2 bound} and \eqref{u2-} obtained above and denoting the term $\frac{\Gamma\left(\frac{3}{8}\right)}{\Gamma\left(\frac{7}{8}\right)}\frac{\Gamma\left(\frac{9}{8}\right)}{\Gamma\left(\frac{5}{8}\right)}=\hat{c}>0$, we can find the bounds for $\tilde{U}_2$, which is:
		\begin{align}
			-4\sqrt{2}\pi \hat{c} %\frac{\Gamma\left(\frac{3}{8}\right)}{\Gamma\left(\frac{7}{8}\right)}\frac{\Gamma\left(\frac{9}{8}\right)}{\Gamma\left(\frac{5}{8}\right)}
			(y^2+x^2)^{\frac{1}{8}}-aB_2||\theta||_{\infty}+\pi &a|y|-\log^+\frac{|x|}{|y|}-\log^+\frac{x^2+y^2}{B^2}-\log2\nonumber\\ \label{tl U_2}\leq \tilde{U}_2(z)
			\leq -4\sqrt{2}\pi (y^2+x^2)^{\frac{1}{8}}&+aB_2||\theta||_{\infty}+\log^+\frac{|x|}{|y|}+\pi a|y|.
		\end{align}
		Using \eqref{tl U_2}, we can establish the desired estimates of $M_2$. Let us first find the upper bound $M_2$ on $\rea$. \begin{comment}We use the following bound
		\begin{align*}
		2\cos\left(\frac{1}{4}\,\arctan\left(-\frac{1}{x}\right)\right)>1.848>(2+\sqrt{2})^{\frac{1}{2}},
		\end{align*}
		\end{comment}
		\begin{align*}
			|M_2(x)|=\exp(\tilde{U}_2(x-i))&\leq \exp(-4\sqrt{2}\pi(1+x^2)^{\frac{1}{8}}+aB_2||\theta||_{\infty}+\log^+(|x|))\nonumber\\
			& \leq C\, \exp\left(-4\sqrt{2}\pi |x|^\frac{1}{4}+\log^+|x|+aB_2||\theta||_{\infty}\right)\nonumber\\
			& \leq C\, |x|\, \exp(-4\sqrt{2}\pi|x|^\frac{1}{4}),
		\end{align*}
		which proves \eqref{m_1bound1}.
		
		\noindent
		We now estimate $M_2$ at the zeros of $P$, i.e., we prove \eqref{m_1bound2} and \eqref{m_1bound3}. Recall 
		\begin{align*}
			\mu_k^{+}&=-k^2+ik-iO(|k|^{-1})+O(|k|^{-2}), \\
			\mu_k^{-}&=-k^4+ik^3+k^2-iO(|k|^{-1})-O(|k|^{-2}).
		\end{align*}
		So, we have
		\begin{align*}
			\tilde{U}_2(-i\mu_k^{+}-i)&=\tilde{U}_2((-k-O(|k|^{-1}))+i(k^2-1-O(|k|^{-2})))\\
			\tilde{U}_2(-i\mu_k^--i)&=\tilde{U}_2((k^3-O(|k|^{-1}))+i(k^4-k^2-1+O(|k|^{-2}))) .
		\end{align*}
		Thus, inequality \eqref{tl U_2} gives 
		\begin{align*}
			|M_2(-i\mu_k^+)|&=\exp(\tilde{U}_2(-i\mu_k^+-i))\nonumber\\
			&\geq \exp\left(-4\sqrt{2}\hat{c}\pi(k^4-k^2+O(1))^{\frac{1}{8}}-aB_2||\theta||_{\infty}+a\pi|k^2-1-O(|k|^{-2})|\right)\nonumber\\ & \hspace{25 mm} \exp\left(-\log^+\frac{|k+O(|k|^{-1})|}{|k^2-1-O(|k|^{-2})|}-\log^+\frac{k^4-k^2+O(1)}{B_2^2}\right)\nonumber\\
			& \geq C\, \exp\left(-4\sqrt{2}\hat{c}\pi(k^4)^{\frac{1}{8}}+\pi a |k|^2 -\log\left(c\,k^4\right)\right), \text{ for some }c,C >0 \nonumber\\
			& \geq C \, \exp(-4\sqrt{2} \hat{c}\pi |k|^{\frac{1}{2}}+\pi a |k|^2 -4 \log|c\,k|)\nonumber\\
			& \geq \exp(-4\sqrt{2}\hat{c} \pi|k|^{\frac{1}{2}}+\pi a |k|^2-4|k|)\nonumber\\
			& \geq \exp(\pi a |k|^2-4(\sqrt{2}+1)\pi|k|), 
		\end{align*}
		which is \eqref{m_1bound2}, and 
		\begin{align*}
			|M_2(-i\mu_k^-)|&=\exp\left(\tilde{U}_2(-i\mu_k^--i)\right)\nonumber \\
			&\geq C\, \exp\left(-4 \sqrt{2}\hat{c}\pi(c_1\,k^8)^{\frac{1}{8}}+\frac{\pi a}{2}k^4-\log(c_2\,k^8)\right),\text{ for some }c_1,c_2,C>0\nonumber\\
			&\geq C\,  \exp\left(\pi a k^4 -c\,|k|\right),
		\end{align*}
		which is \eqref{m_1bound3}.
	\end{proof}
	\begin{proposition}[Construction of $M_1$]
		There exists an entire functions $M_1$ of exponential type at most $a\pi$ which satisfies the following
		\begin{align*}
			|M_1(x)|&\leq C_4 |x| e^{-\sqrt{2}\pi \sqrt{\abs{x}}}, \\
		 |M_1(-i\mu_k^+)|&\geq C_5 \, e^{a\pi k^2 -(5+2\sqrt{2})\pi|k|},k \in \z\setminus\{0\},\\
		|M_1(-i\mu_k^-)| &\geq C_6 e^{a\pi k^4-(2\sqrt{2}+1)\pi k^2-8|k|},k \in \z\setminus\{0\},
		\end{align*}
		for some $C_4,C_5,C_6>0$.
	\end{proposition}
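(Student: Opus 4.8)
The construction of $M_1$ mirrors that of $M_2$ in \Cref{m prop} verbatim, with the quartic-root exponent $\tfrac14$ replaced by the square-root exponent $\tfrac12$; this is exactly what is needed to compensate the growth $\abs{P_1(x)}\asymp e^{\sqrt2\pi\sqrt{\abs{x}}}$ recorded in \eqref{eq:p1real}. The plan is first to set, on $(0,\infty)$, $s_1(t)=at-b_1 t^{1/2}$ with $a=\tfrac{T}{4\pi}$ and $b_1=\sqrt2$; then $s_1(B_1)=0$ for $B_1=(b_1/a)^2$, and $s_1$ is increasing past a fixed multiple of $B_1$. Defining $\nu_1(t)=0$ for $t\le B_1$ and $\nu_1(t)=s_1(t)$ for $t\ge B_1$ gives a non-negative, non-decreasing function with $0\le\nu_1(t)\le at$; I then form $g_1(z)=\int_0^\infty\log(1-z^2/t^2)\,d\nu_1(t)$ and $U_1(z)=\Re g_1(z)$, atomize $d\nu_1$ to $d[\nu_1]$ to obtain $\tl g_1$ and $\tl U_1$, and set $M_1(z)=\exp(\tl g_1(z-i))$. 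The bound $\nu_1(t)\le at$ forces $M_1$ to be entire of exponential type at most $a\pi$, just as for $M_2$.

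For the quantitative bounds I would reproduce the two-step scheme of \Cref{m prop}. Step one is the real-line identity: since $\int_0^\infty\log\abs{1-x^2/t^2}\,dt^{\gamma}=\abs{x}^{\gamma}\pi\cot\tfrac{\pi\gamma}{2}$ for $\gamma\in(0,2)$, taking $\gamma=\tfrac12$ (so $\cot\tfrac{\pi}{4}=1$) yields $\int_0^\infty\log\abs{1-x^2/t^2}\,ds_1(t)=-\sqrt2\,\pi\sqrt{\abs{x}}$, and the argument behind \Cref{lemma glass} then gives $U_1(x)+\sqrt2\,\pi\sqrt{\abs{x}}=-aB_1\,\theta_1(\abs{x}/B_1)$ for some $\theta_1\in L^{\infty}(\rea^+)$. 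Step two uses the Poisson-type representation analogous to \eqref{u cplx} (where the $at$-part of $s_1$, invisible on $\rea$ since $\cot\tfrac\pi2=0$, reappears as the $\pi a\abs{y}$ term); substituting the step-one identity splits the relevant integral into a $\theta_1$-term controlled by $aB_1\norm{\theta_1}_{\infty}$ exactly as in \eqref{term1}, and a main term of the form $-\sqrt2\,\cot\tfrac{\pi}{4}\,\abs{y}^{1/2}\int_{\rea}\frac{\abs{s}^{1/2}}{(x/y-s)^2+1}\,ds$. This last integral I control by the $\gamma=\tfrac12$ analogue of \eqref{ineq}, i.e. $c_-(1+x^2)^{1/4}\le\int_{\rea}\frac{\abs{s}^{1/2}}{1+\abs{x-s}^2}\,ds\le c_+(1+x^2)^{1/4}$ with explicit Gamma-function constants, normalized so that the leading term works out to $-\sqrt2\,\pi(x^2+y^2)^{1/4}$. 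Combining these with the atomization correction of \Cref{lem} (\cite{LR14}, Lemma 3.8) produces two-sided bounds on $\tl U_1(x+iy)$ of the shape $\pi a\abs{y}-\sqrt2\,\hat c_1\,\pi(x^2+y^2)^{1/4}-\log^+\tfrac{\abs{x}}{\abs{y}}-\log^+\tfrac{x^2+y^2}{B_1^2}-C$ from below and $\pi a\abs{y}-\sqrt2\,\pi(x^2+y^2)^{1/4}+\log^+\tfrac{\abs{x}}{\abs{y}}+C$ from above, with $\hat c_1>0$ the ratio of Gamma factors.

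The three claimed inequalities then follow by evaluating $\tl U_1$ at the appropriate points and exponentiating, since $\abs{M_1(z)}=\exp(\tl U_1(z-i))$ off $\rea$. On the real line, $\abs{M_1(x)}=\exp(\tl U_1(x-i))\le C\abs{x}e^{-\sqrt2\pi\sqrt{\abs{x}}}$, using $(1+x^2)^{1/4}\ge\abs{x}^{1/2}$ and the $\log^+\abs{x}$ correction. Using $\mu_k^+=-k^2+ik+O(\abs{k}^{-1})$, the point $-i\mu_k^+-i$ sits at $\approx k+i(k^2-1)$, so $\pi a\abs{y}\approx\pi ak^2$ while $\sqrt2\hat c_1\pi(x^2+y^2)^{1/4}=O(\abs{k})$ and the $\log^+$ terms are $O(\abs{k})$; collecting constants gives $\abs{M_1(-i\mu_k^+)}\ge C_5\,e^{a\pi k^2-(5+2\sqrt2)\pi\abs{k}}$. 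Using $\mu_k^-=-k^4+ik^3+k^2+O(\abs{k}^{-1})$, the point $-i\mu_k^--i$ sits at $\approx k^3+i(k^4-k^2-1)$; now $\pi a\abs{y}\approx\pi ak^4$, but the real part is as large as $k^3$, so $\sqrt2\hat c_1\pi(x^2+y^2)^{1/4}\asymp k^2$ produces a genuine quadratic loss --- this is the source of the $-(2\sqrt2+1)\pi k^2$ term --- while the $\log^+$ corrections stay $O(\abs{k})$, yielding $\abs{M_1(-i\mu_k^-)}\ge C_6\,e^{a\pi k^4-(2\sqrt2+1)\pi k^2-8\abs{k}}$.

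The main obstacle here is bookkeeping rather than anything conceptual: one must track the Gamma-function constant $\hat c_1$ from the square-root analogue of \eqref{ineq} through the chain of estimates and check it is small enough that the linear- and quadratic-in-$\abs{k}$ losses come out with exactly the advertised coefficients $5+2\sqrt2$ and $2\sqrt2+1$. A point worth isolating is that, unlike the $(x^2+y^2)^{1/8}$-contribution in the $M_2$ estimate --- which stays subquadratic at $-i\mu_k^-$ and is therefore absorbed into the $O(\abs{k})$ term --- here the $(x^2+y^2)^{1/4}$-contribution is of order $k^2$ at $-i\mu_k^-$ and must be kept. Everything else is a faithful transcription of the $M_2$ argument, including the $L^{\infty}$ bound on $\theta_1$ (as in Appendix A, \Cref{app}) and the integral inequality (Appendix B, \Cref{secinq}, with $\tfrac14$ replaced by $\tfrac12$).
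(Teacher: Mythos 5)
Your construction of $M_1$ by transcribing the $M_2$ argument is not what the paper does, and more importantly the transcription does not deliver the stated real-line bound. The paper's own proof does not rederive a two-sided estimate on $\tilde{U}_1$; it simply quotes Proposition~3.9 of \cite{LR14}, which asserts
\begin{align*}
-C - b_1\pi\Bigl(1+\tfrac{1}{\sqrt2}\Bigr)\sqrt{|y|} - \log^+\tfrac{|x|}{|y|} - \log^+\tfrac{x^2+y^2}{B_1^2} - \log 2 \le \tilde{U}_1(x+iy) + b_1\pi\sqrt{|x|} - a\pi|y| \le C + \log^+\tfrac{|x|}{|y|}.
\end{align*}
The decisive feature here is that the main subtracted term is $b_1\pi\sqrt{|x|}$ exactly, so with $b_1=\sqrt2$ and $y=-1$ one reads off $\tilde{U}_1(x-i)\le -\sqrt2\pi\sqrt{|x|}+O(\log|x|)$, precisely what the proposition requires.

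Your Poisson-kernel route, on the other hand, gives a genuinely weaker upper bound. Running the $\gamma=\tfrac12$ analogue of the Appendix~B computation (\Cref{secinq}), the lower bound of $\int_{\rea}\frac{|s|^{1/2}}{1+|x-s|^2}\,ds$ has constant $2\int_0^{\pi/2}\sin^{1/2}w\,dw=\sqrt{\pi}\,\Gamma(\tfrac34)/\Gamma(\tfrac54)\approx2.40$, so the leading coefficient in your upper bound for $\tilde{U}_1(x-i)$ comes out to $\sqrt2\cdot2.40\approx3.39$, strictly smaller than $\sqrt2\pi\approx4.44$. Your assertion that the Gamma constants are ``normalized so that the leading term works out to $-\sqrt2\pi(x^2+y^2)^{1/4}$'' is therefore false for $b_1=\sqrt2$. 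This is not cosmetic: the bound $|M_1(x)|\le C_4|x|e^{-\sqrt2\pi\sqrt{|x|}}$ must cancel $|P_1(x)|\asymp e^{\sqrt2\pi\sqrt{|x|}}$ from \eqref{eq:p1real}, and with a coefficient of only $3.39$ the product $P_1M_1$ grows like $e^{(\sqrt2\pi-3.39)\sqrt{|x|}}$ on $\rea$, destroying the $L^2$-integrability of the $\Psi_k^\pm$ downstream. The contrast with $M_2$ is the whole point: there, $b_2$ was engineered so that $b_2\cot(\tfrac\pi8)$ times the Appendix~B lower constant equals $2\sqrt2\pi$ exactly, which is why the Poisson route closes. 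The forced choice $b_1=\sqrt2$ (dictated by the growth of $P_1$) does not satisfy the analogous identity. To rescue your approach you would either have to take $b_1=\sqrt{2\pi}\,\Gamma(\tfrac54)/\Gamma(\tfrac34)\approx1.85$ and recompute every constant at $-i\mu_k^\pm$, or else refine the Poisson estimate by splitting $\sqrt{|t|}\le\sqrt{|x|}+\sqrt{|t-x|}$ inside the kernel so that $\sqrt{|x|}$ factors out exactly at the cost of an extra $O(\sqrt{|y|})$ term --- which is precisely the shape of the \cite{LR14} bound the paper cites, and the reason it cites it.
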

	\begin{proof}
		Define 
		\begin{align*}
		s_1(t)=at-b_1\sqrt{t}, \hspace{5 mm} t>0.
		\end{align*}
		Note that $s_1(B_1)=0$ for $B_1=\left(\frac{b_1}{a}\right)^2$ and $s_1(t)$ is increasing for $t >\left(\frac{b}{2a}\right)^2=\frac{B_1}{4}$ . Now define
		\begin{align*}
		\nu_1(t) = \begin{cases} 0, & t\leq B_1 ,\\
		s_1(t), & t\geq B_1 .\end{cases}
		\end{align*}
		Observe that $\nu_1$ is non-negative and non-decreasing function. Moreover, $0\leq \nu_1(t) \leq at$ for $t>0$. 
		Let us define 
		\begin{align*}
		g_1(z)&=\int_{0}^{\infty} \log\left(1-\frac{z^2}{t^2}\right)\, d\nu_1(t) = \int_{B_1}^{\infty} \log\left(1-\frac{z^2}{t^2}\right)\, ds_1(t), \hspace{2mm}  t\in \cplx \setminus \rea, \\
		U_1(z)&= \int_{0}^{\infty} \log\left|1-\frac{z^2}{t^2}\right|\, d\nu_1(t) = \int_{B_1}^{\infty} \log\left|1-\frac{z^2}{t^2}\right|\, ds_1(t), \hspace{2mm}  t\in \cplx .
		\end{align*}
		Similar to the previous proof, we atomize the measure and define
		\begin{align*}
		\tilde{g}_1(z)&=\int_{0}^{\infty} \log\left(1-\frac{z^2}{t^2}\right)\, d[\nu_1(t)] = \int_{B_1}^{\infty} \log\left(1-\frac{z^2}{t^2}\right)\, d[s_1(t)], \hspace{2mm}  t\in \cplx \setminus \rea .\\
		\tilde{U}_1(z)&= \int_{0}^{\infty} \log\left|1-\frac{z^2}{t^2}\right|\, d[\nu_1(t)] = \int_{B_1}^{\infty} \log\left|1-\frac{z^2}{t^2}\right|\, d[s_1(t)], \hspace{2mm}  t\in \cplx .
		\end{align*}
		and then define multiplier as $M_1(z)=\exp(\tilde{g}_1(z-i))$. So, we have $|M_1(z)|=\exp(\tilde{U}_1(z-i))$.
		
		Now, from (\cite{LR14},Proposition 3.9), we get the following estimate of $\tilde{U}_1$ on $\cplx\setminus\rea$:
		\begin{align*}
		-C-b_1\pi\left(1+\frac{1}{\sqrt{2}}\right)\sqrt{|y|} -\log^{+}\frac{|x|}{|y|}& - \log^{+}\left(\frac{x^2+y^2}{B_1^2}\right)-\log2\\ \nonumber & \leq \tilde{U}_1(x+iy)+b_1\pi\sqrt{|x|}-a\pi|y|\leq C + \log^{+}\frac{|x|}{|y|}.
		\end{align*}
		where $C > 0$ is 
		a positive constant independent of $z$ and $k$.
		Using this estimate we get:
		\begin{align*}
		|\tilde{U}_1(x-i)|\leq C + \log^{+}|x| -b_1\pi\sqrt{|x|}+a\pi,
		\end{align*}
		which gives
		\begin{align*}
		|M_1(x)|\leq C |x|e^{-\sqrt{2}\pi\sqrt{|x|}}.
		\end{align*}
		Next we estimate $M_1$ at $\{-i\mu_k^\pm\}_{k\in\z\setminus\{0\}}$,
		\begin{align*}
		|M_1(-i\mu_k^{+})|&=exp\left(\tilde{U}_1\left((-k-O(|k|^{-1}))+i(k^2-1-O(|k|^{-2}))\right)\right)\\
		&\geq C \exp\left( -\sqrt{2}\pi(\sqrt{|k^2-1-O(k^{-2})|}+\sqrt{|k+O(k^{-1})|})-\pi\sqrt{|k^2-1-O(k^{-2})|}\right)\\&\hspace{50 mm} \exp\left(-\log(c\,k^4)+a\pi|k^2-1-O(k^{-2})|\right),\,c,C>0\\
		& \geq C \, \exp(-2\sqrt{2}\pi |k|-\pi|k|-4 \log|c\,k|+a\pi(k^2-1-O(k^{-2})))\nonumber\\
		& \geq C \, \exp(a\pi k^2 -(5+2\sqrt{2})\pi|k|), \text{ and }
		\end{align*}
		\begin{align*}
		|M_1(-i\mu_k^-)|&=exp\left(\tilde{U}_1(-i\mu_k^--i)\right)\\&=\tilde{U}_1((k^3-O(k^{-1}))+i(k^4-k^2-1+O(k^{-2}))) \\
		&\geq C \exp\left( -\sqrt{2}\pi\left(\sqrt{|k^4-2k^2-1+O(k^{-2})|}+\sqrt{|k^3-O(k^{-1})|}\right)+a\pi|k^4-k^2-1-O(|k|^{-2})|\right)\\ &\hspace{50 mm}\exp(-\pi\sqrt{|k^4-2k^2-1+O(k^{-2})|}-\log(c\, k^8)),\,c,C>0\\
		&\geq C\,\exp(a\pi k^4-8\,\log|c\,k|-2\sqrt{2}\pi k^2-\pi k^2)\nonumber\\
		&\geq C\, \exp(a\pi k^4-(2\sqrt{2}+1)\pi k^2-8|k|). 
		\end{align*}
	\end{proof}

	\noindent
	Combining the last two propositions, we get the following result concerning the multiplier $M$ of $P:$
	\begin{proposition}[Multiplier, $M$]
		Let $M(z)=M_1(z)M_2(z)$. Then $M$ is an entire function of exponential type at most $2a\pi=\frac{T}{2}$ and satisfies the following bound:
		\begin{align*}
		|M(x)|&\leq C\,|x|^2e^{-\sqrt{2}\pi \sqrt{\abs{x}}-4\sqrt{2}\pi\sqrt[4]{|x|}},\\
		|M(-i\mu_k^+)|& \geq \widetilde{C} \, e^{2a\pi k^2 -3(3+2\sqrt{2})\pi|k|},\,k\in\z\setminus\{0\},\\
		|M(-i\mu_k^-)|& \geq \widehat{C}\, e^{2a\pi k^4-(2\sqrt{2}+1)\pi k^2-c|k|},\,k\in\z\setminus\{0\},
		\end{align*}
		for some $c,C,\widetilde{C},\widehat{C}>0$.
	\end{proposition}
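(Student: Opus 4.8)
The plan is to obtain this proposition as an immediate consequence of the two preceding propositions, since $M$ is nothing but the product $M_1 M_2$. First I would note that $M = M_1 M_2$ is entire, being a product of entire functions. To control its exponential type, I would use the fact that $M_1$ and $M_2$ were both shown to be of exponential type at most $a\pi$: given $\eps > 0$, there are constants $A_{\eps/2}, A'_{\eps/2} > 0$ with $\abs{M_1(z)} \le A_{\eps/2}\, e^{(a\pi + \eps/2)\abs{z}}$ and $\abs{M_2(z)} \le A'_{\eps/2}\, e^{(a\pi + \eps/2)\abs{z}}$ for all $z \in \cplx$; multiplying these two inequalities gives $\abs{M(z)} \le A_\eps\, e^{(2a\pi + \eps)\abs{z}}$, so $M$ is of exponential type at most $2a\pi$. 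Since $a = \tfrac{T}{4\pi}$, this type equals $\tfrac{T}{2}$, which is exactly what is needed to run the Paley--Wiener argument recorded at the start of this section.

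Next I would establish the three pointwise estimates by multiplying the corresponding bounds for $M_1$ and $M_2$ and collecting exponents. On the real line, combining $\abs{M_1(x)} \le C_4\,\abs{x}\, e^{-\sqrt{2}\pi\sqrt{\abs{x}}}$ with the bound $\abs{M_2(x)} \le C\,\abs{x}\, e^{-4\sqrt{2}\pi\sqrt[4]{\abs{x}}}$ established in the proof of the proposition on $M_2$ yields $\abs{M(x)} \le C\,\abs{x}^2\, e^{-\sqrt{2}\pi\sqrt{\abs{x}} - 4\sqrt{2}\pi\sqrt[4]{\abs{x}}}$, which is the first claimed estimate. At the points $-i\mu_k^+$ with $k \in \z\setminus\{0\}$, multiplying $\abs{M_1(-i\mu_k^+)} \ge C_5\, e^{a\pi k^2 - (5 + 2\sqrt{2})\pi\abs{k}}$ by $\abs{M_2(-i\mu_k^+)} \ge C_2\, e^{a\pi k^2 - 4(\sqrt{2}+1)\pi\abs{k}}$ produces the quadratic term $2a\pi k^2$, while the linear terms combine as $-\bigl((5 + 2\sqrt{2}) + 4(\sqrt{2}+1)\bigr)\pi\abs{k} = -(9 + 6\sqrt{2})\pi\abs{k} = -3(3 + 2\sqrt{2})\pi\abs{k}$; hence $\abs{M(-i\mu_k^+)} \ge \widetilde{C}\, e^{2a\pi k^2 - 3(3 + 2\sqrt{2})\pi\abs{k}}$ with $\widetilde{C} = C_2 C_5$.

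Finally, for the points $-i\mu_k^-$, I would multiply $\abs{M_1(-i\mu_k^-)} \ge C_6\, e^{a\pi k^4 - (2\sqrt{2}+1)\pi k^2 - 8\abs{k}}$ by $\abs{M_2(-i\mu_k^-)} \ge C_3\, e^{a\pi k^4 - c\abs{k}}$, obtaining $\abs{M(-i\mu_k^-)} \ge C_3 C_6\, e^{2a\pi k^4 - (2\sqrt{2}+1)\pi k^2 - (c + 8)\abs{k}}$; renaming $c + 8$ as $c$ and setting $\widehat{C} = C_3 C_6$ gives the third claimed bound. Since every ingredient has already been proved, there is no genuine obstacle in this step: the only points requiring a little care are the behaviour of exponential type under products (handled by the $\eps$-splitting above) and the bookkeeping of the numerical constants in the exponents, in particular the identity $(5 + 2\sqrt{2}) + 4(\sqrt{2}+1) = 3(3 + 2\sqrt{2})$. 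With both $M$ and the canonical product $P$ now in hand, the remaining task of the section is to assemble them into the family $\mathcal{E} = \{\Psi_0, \Psi_k^\pm\}$ satisfying the required growth, square-integrability and interpolation conditions.
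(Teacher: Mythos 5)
Your proof is correct and matches the paper's intent exactly: since $M = M_1 M_2$, the proposition is obtained by multiplying the real-line and $\{-i\mu_k^\pm\}$ estimates from the two preceding propositions and adding the exponential types via the $\varepsilon$-splitting you describe, with the constant bookkeeping $(5+2\sqrt{2})+4(\sqrt{2}+1)=3(3+2\sqrt{2})$ and $c+8\mapsto c$ being the only computation. One further point in your favour: the stated bound \eqref{m_1bound1} on $M_2(x)$ has exponent $-2\sqrt{2}\pi|x|^{1/4}$, but the proof of that proposition actually establishes $-4\sqrt{2}\pi|x|^{1/4}$ (since $b_2\cot(\tfrac{\pi}{8})\,\tfrac{\Gamma(5/8)}{\Gamma(9/8)}\cdot 2\sqrt{\pi}=4\sqrt{2}\pi$), and only the latter is consistent with the bound on $M(x)$ in this proposition and with the square-integrability of $\Psi_k^\pm$ on $\rea$; you were right to take the bound from the proof rather than from the statement, which contains a typo.
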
 
	\subsection{Proof of \Cref{prop1}}
	For $k \in \zahl\setminus\{0\}$, define
	\begin{align*}
	\Psi_k^{\pm}(z)=\frac{P(z)}{P'(-i\mu_k^{\pm})(z+i\mu_k^{\pm})} \frac{M(z)}{M(-i\mu_k^{\pm})},
	\end{align*} and
	\begin{align*}
	\Psi_0(z)=\frac{P(z)}{P'(0)z}\frac{M(z)}{M(0)}.
	\end{align*}
	Clearly, each element of $\{\Psi_k^\pm,\Psi_0\}$ is an entire function of exponential type at most $\frac{T}{2}$ and satisfies
	\begin{align*}
	\Psi_k^{\pm}(-i\mu_l^{\pm}&)=\delta_{kl}\,\delta_{\pm},\hspace{2 mm} \forall\, l \in \zahl \text{ and } k \in \zahl\setminus\{0\}, \\
	\Psi_0(-i\mu_l^{\pm})&=\delta_{0l},\hspace{2 mm} \forall\, l \in \zahl.
	\end{align*}
For $k \in \zahl\setminus\{0\}$, we have
\begin{align*}
	|\Psi_k^+(x)| &\leq C \frac{|k|^3 |x| e^{-2\sqrt{2}\pi|x|^{\frac{1}{4}}}}{|x-k|+k^2}e^{-\frac{T}{2} k^2 -2\pi |k|^{\frac{1}{2}}+3(3+2\sqrt{2})\pi|k|},%\text{ for some }c>0
	\nonumber \\
	&\leq C \frac{|k|^3 }{|x-k|+k^2}e^{-\frac{T}{2} k^2 -2\pi |k|^{\frac{1}{2}}+3(3+2\sqrt{2})\pi|k|}.\\
	|\Psi_k^-(x)| &\leq \widetilde{C} \frac{|k|^7|x|e^{-2\sqrt{2}\pi|x|^{\frac{1}{4}}}}{|x+k^3|+|k^4-2k^2|}e^{-\frac{T}{2} k^4 +(2\sqrt{2}+1)\pi |k|^2+(c-3)\pi|k|},\text{ for some }c>0\nonumber \\
	&\leq \widetilde{C} \frac{|k|^7}{|x+k^3|+|k^4-2k^2|}e^{-\frac{T}{2} k^4 +(2\sqrt{2}+1)\pi |k|^2+(c-3)\pi|k|}.
\end{align*}
These estimates show that $\Psi_k^+,\Psi_k^-\in L^2(\rea)$ with
\begin{align*}
	||\Psi_k^+||_{L^2(\rea)}&\leq C\,|k|^2e^{-\frac{T}{2} k^2 -2\pi |k|^{\frac{1}{2}}+3(3+2\sqrt{2})\pi|k|},\\
	||\Psi_k^-||_{L^2(\rea)}&\leq C |k|^5 e^{-\frac{T}{2} k^4 +(2\sqrt{2}+1)\pi |k|^2+(c-3)\pi|k|}.
\end{align*} 
Similarly, \begin{align*}
	|\Psi_0(x)| \leq Ce^{-2\sqrt{2}\pi|x|^{\frac{1}{4}}},% \text{ for some } c>0,
\end{align*} and so $\Psi_0 \in L^2(\rea)$ as well.
	\begin{comment}
	For $k \in \zahl\setminus\{0\}$, we have
	\begin{align*}
	|\Psi_k^+(x)| &\leq C \frac{|k|^3 |x| e^{-c|x|^{\frac{1}{4}}}}{|x-k|+k^2}e^{-\frac{T}{2} k^2 -2\pi |k|^{\frac{1}{2}}+3(3+2\sqrt{2})\pi|k|},\text{ for some }c>0\nonumber \\
	&\leq C \frac{|k|^3 }{|x-k|+k^2}e^{-\frac{T}{2} k^2 -2\pi |k|^{\frac{1}{2}}+3(3+2\sqrt{2})\pi|k|}.\\
	|\Psi_k^-(x)| &\leq \widetilde{C} \frac{|k|^7|x|e^{-c|x|^{\frac{1}{4}}}}{|x+k^3|+|k^4-2k^2|}e^{-\frac{T}{2} k^4 +(2\sqrt{2}+1)\pi |k|^2+(c-3)\pi|k|},\text{ for some }c>0\nonumber \\
	&\leq \widetilde{C} \frac{|k|^7}{|x+k^3|+|k^4-2k^2|}e^{-\frac{T}{2} k^4 +(2\sqrt{2}+1)\pi |k|^2+(c-3)\pi|k|}.
	\end{align*}
	These estimates show that $\Psi_k^+,\Psi_k^-\in L^2(\rea)$ with
	\begin{align*}
	||\Psi_k^+||_{L^2(\rea)}&\leq C\,|k|^2e^{-\frac{T}{2} k^2 -2\pi |k|^{\frac{1}{2}}+3(3+2\sqrt{2})\pi|k|},\\
	||\Psi_k^-||_{L^2(\rea)}&\leq C |k|^5 e^{-\frac{T}{2} k^4 +(2\sqrt{2}+1)\pi |k|^2+(c-3)\pi|k|}.
	\end{align*} 
	Similarly, \begin{align*}
	|\Psi_0(x)| \leq Ce^{-c\pi|x|^{\frac{1}{4}}}, \text{ for some } c>0,
	\end{align*} and so $\Psi_0 \in L^2(\rea)$ as well.
	\end{comment}
	\noindent
	As discussed in the beginning of this section, we define the biorthogonal family element, $\Theta_k^{\pm}$ and $\Theta_0$ as the inverse Fourier transform of $\Psi_k^{\pm}$ and $\Psi_0$ respectively, where $k \in \zahl\setminus \{0\}$. Then,  $\Theta_k^{\pm},\Theta_0 \in L^2(\rea)$ with
	\begin{align*}
	||\Theta_k^+||_{L^2(\rea)}&\leq C\,|k|^2e^{-\frac{T}{2} k^2 -2\pi |k|^{\frac{1}{2}}+3(3+2\sqrt{2})\pi|k|},\\
	||\Theta_k^-||_{L^2(\rea)}&\leq C |k|^5 e^{-\frac{T}{2} k^4 +(2\sqrt{2}+1)\pi |k|^2+(c-3)\pi|k|},\\
	||\Theta_0||_{L^2(\rea)} &\leq C,
	\end{align*}
	where $c,C>0$.

\section{Appendix A. Proof of \Cref{lemma glass}}\label{app}
\begin{proof}
	First we write the expression of $U_2$ \eqref{u2} restricted on the real line :
	\begin{align*}
	U_2(x)= \int_{0}^{\infty} \log\left|1-\frac{x^2}{t^2}\right|\, d s_2(t)-\int_{0}^{B} \log\left|1-\frac{x^2}{t^2}\right|\, d s_2(t), \hspace{2mm}  x\in \rea .
	\end{align*}
	Thus using \eqref{eq:log}, we obtain
	\begin{align*}
	U_2(x)+b_2\,\pi\, \cot\left(\frac{\pi}{8}\right)\sqrt[4]{|x|}= -\int_{0}^{B} \log\left|1-\frac{x^2}{t^2}\right|\, d s_2(t), \hspace{2mm}  x\in \rea .
	\end{align*}
	Applying a change of variable $t\to \frac{t}{B}$, we have 
	\begin{align*}
	\int_{0}^{B} \log\left|1-\frac{x^2}{t^2}\right|\, d s_2(t)=aB_2 \int_{0}^{1} \log\left|1-\frac{x^2}{B_2^2 t^2}\right|\, d(t-t^{1/4}).
	\end{align*}
	Let us define the following function in $(0,\infty)$ 
	\begin{align*}
	\theta(x)=\int_{0}^{1} \log\left|1-\frac{x^2}{t^2}\right|\, d(t-t^{1/4}).
	\end{align*}
	Note that $\theta\in L^{\infty}(0, \infty)$, see \cite{AM21} (Lemma 16) for details. A direct computation conclude the proof of \Cref{lemma glass}. 
\end{proof}
\section{Appendix B. Details of the inequality
	\eqref{ineq}}\label{secinq}
Let us first denote 
\begin{align*}
	\mathcal{I}=\int_{-\infty}^{\infty}\frac{|s|^{\frac{1}{4}}}{1+|x-s|^2}\,ds.
\end{align*}
Applying the change of variable $x-s =\tan(w)$, we have
\begin{align}\label{id}
	\mathcal{I} =\int_{-\frac{\pi}{2}}^{\frac{\pi}{2}} |x-\tan(w)|^{1/4}dw
\end{align}
Next, we put $x=\tan\alpha$ in \eqref{id} for some $\alpha\in (-\frac{\pi}{2}, \frac{\pi}{2})$. Then the above identity \eqref{id} becomes 
\begin{align}\label{I}
	\mathcal{I}=\int_{-\frac{\pi}{2}}^{\frac{\pi}{2}} \frac{|\sin^{1/4}(w-\alpha)|}{|\cos^{1/4}(w)|}dw
\end{align}
Next, we will use the following inequalities:
\begin{align*}
	0\leq |\sin^{1/4}(w-\alpha)|\leq 1,\quad  1\leq \frac{1}{|\cos^{1/4}(w)|}<\infty,\,\, & w\in \left(-\frac{\pi}{2}, \frac{\pi}{2}\right).
\end{align*}
Thus, from \eqref{I} we have, 
\begin{align}\label{ineqI}
	\int_{-\frac{\pi}{2}}^{\frac{\pi}{2}}	|\sin^{1/4}(w-\alpha)|\, dw \leq\mathcal{I}\leq \int_{-\frac{\pi}{2}}^{\frac{\pi}{2}}\frac{1}{|\cos^{1/4}(w)|} \,dw.
\end{align}
Now, \begin{align}\label{eqI}
	\int_{-\frac{\pi}{2}}^{\frac{\pi}{2}}	|\sin^{1/4}(w-\alpha)|\, dw=\int_{-\frac{\pi}{2}+\alpha}^{\frac{\pi}{2}+\alpha}|\sin^{1/4}(w)|\, dw=2\int_{0}^{\frac{\pi}{2}}|\sin^{1/4}(w)|\, dw.
\end{align} In the last equality we have used periodicity of sine function. Next, as cosine function is even, and positive in $\left(-\frac{\pi}{2}, \frac{\pi}{2}\right)$, we have: \begin{align}\label{eqI1}
	\int_{-\frac{\pi}{2}}^{\frac{\pi}{2}}\frac{1}{|\cos^{1/4}(w)|}dw=2\int_{0}^{\frac{\pi}{2}}{\cos^{-1/4}(w)} \,dw.
\end{align}
Let us recall, $$B(m,n)=2\int_{0}^{\frac{\pi}{2}}\sin^{2m-1}(w) \, \cos^{2n-1}(w) \, \, dw= \frac{\Gamma(m)\Gamma(n)}{\Gamma(m+n)}, \quad m, \,n >0.$$ Using the above formula and \eqref{ineqI}, \eqref{eqI}, \eqref{eqI1}, we have the desired inequality \eqref{ineq}.
\section{Appendix C. Proof of well-posedness results}\label{sect7}
\subsection{Proof of \Cref{adj well-prop}}
\begin{proof}
	Let us first perform a change of variable $t\mapsto T-t$ in \eqref{adjoint} to get the forward adjoint system
	\begin{equation}\label{for}
		\begin{cases}
	\varphi_t+\varphi_{xxxx}-\varphi_{xxx}+\varphi_{xx}+\psi_x=h_1,\quad\quad& (t,x) \in (0,T)\times (0,2\pi),\\
	\psi_t-\psi_{xx}-\psi_x+\varphi_x=h_2,\quad\quad & (t,x) \in (0,T)\times (0,2\pi),\\
	\varphi(0,x)=\varphi_T(x), \psi(0,x)=\psi_T(x),\quad\quad & x\in (0,2\pi),
	\end{cases}
	\end{equation}
	with the same periodic boundary condition as in \eqref{adjoint}.
	
	Let us assume the data $h_1, h_2, \varphi_T\text{ and } \psi_T$ to be regular enough. Then we multiply the first and second equations of the system \eqref{for} by $\overline\varphi$ and $\overline\psi$ respectively, and then take real part of the equation. Performing integration by parts and then adding them, we get
	\begin{align}
	&\nonumber\frac{1}{2}\dfrac{d}{dt}\int_0^{2\pi}(\,|\varphi|^2+|\psi|^2)+\int_0^{2\pi}|\varphi_{xx}|^2+\int_0^{2\pi}|\psi_{x}|^2\\&\hspace{50 mm}\nonumber=\Re\left(\int_0^{2\pi} h_1\overline\varphi\right)+\Re\left(\int_0^{2\pi}h_2\overline\psi\right)-\Re\left(\int_0^{2\pi}\varphi_{xx}\overline\varphi\right)\\
	&\nonumber\hspace{50 mm}\leq \int_0^{2\pi} |h_1\overline\varphi|+\int_0^{2\pi}|h_2\overline\psi|+\int_0^{2\pi}|\varphi_{xx}\overline\varphi|.
	\end{align}
	Using Young's inequality in the last term of R.H.S, we get
	\begin{align}\label{ineq0}
	\dfrac{d}{dt}\int_0^{2\pi}(\,|\varphi|^2+|\psi|^2)+\int_0^{2\pi}|\varphi_{xx}|^2+2\int_0^{2\pi}|\psi_{x}|^2\leq 2\int_0^{2\pi} |h_1\overline\varphi|+2\int_0^{2\pi}|h_2\overline\psi|+\int_0^{2\pi}|\varphi|^2.
	\end{align}

	Next we multiply the first equation of the above forward adjoint system \eqref{for} by $\overline\varphi_{xxxx}$ and then consider the real part of the equation. Performing integration by parts, we obtain
	\begin{align*}
	&\Re\left(\int_0^{2\pi}\varphi_{txx}\overline\varphi_{xx}\right)+\int_0^{2\pi}|\varphi_{xxxx}|^2\\&\hspace{30 mm}=-\Re\left(\int_0^{2\pi}\varphi_{xx}\overline \varphi_{xxxx}\right)-\Re\left(\int_0^{2\pi}\psi_x\overline\varphi_{xxxx}\right)+\Re\left(\int_0^{2\pi}h_1\overline\varphi_{xxxx}\right)\\
	&\hspace{30 mm}\leq\int_0^{2\pi}|\varphi_{xx}\overline\varphi_{xxxx}|+\int_0^{2\pi}|\psi_x\overline\varphi_{xxxx}|+\Re\left(\int_0^{2\pi}h_1\overline\varphi_{xxxx}\right).
	\end{align*}
	Applying Young's inequality for the first and second terms of R.H.S with $\epsilon>0$, we have:
	\begin{align*}&\frac{1}{2}\dfrac{d}{dt}\int_0^{2\pi}|\varphi_{xx}|^2+\int_0^{2\pi}|\varphi_{xxxx}|^2\leq \frac{\epsilon}{2}\int_{0}^{2\pi}|\varphi_{xxxx}|^2+\frac{1}{2\epsilon}\int_{0}^{2\pi}|\varphi_{xx}|^2+\frac{\epsilon}{2}\int_{0}^{T}|\varphi_{xxxx}|^2 \\\nonumber&\hspace{90 mm}+\frac{1}{2\epsilon}\int_{0}^{2\pi}|\psi_{x}|^2+\Re\left(\int_0^{2\pi}h_1\overline\varphi_{xxxx}\right).\end{align*}
	After simplifying, we deduce	\begin{align}\label{eq1}&\dfrac{d}{dt}\int_0^{2\pi}|\varphi_{xx}|^2+2(1-\epsilon)\int_0^{2\pi}|\varphi_{xxxx}|^2\leq C\left(\,\int_{0}^{2\pi}|\varphi_{xx}|^2+\int_{0}^{2\pi}|\psi_{x}|^2\right)+2\,\Re\left(\int_0^{2\pi}h_1\overline\varphi_{xxxx}\right).
	\end{align}
	Next multiply the second equation of the adjoint \eqref{for} by $\overline\psi_{xx}$, so that for any $\epsilon>0$, we have:
	\begin{align}\label{eq2}
	\nonumber \frac{d}{dt}\int_{0}^{2\pi}|\psi_x|^2+2(1-\epsilon)\int_0^{2\pi}|\psi_{xx}|^2&\leq C\left(\,\int_{0}^{2\pi}|\psi_x|^2+\int_0^{2\pi}|\varphi_x|^2\right)+2\,\Re\left(\int_{0}^{2\pi}h_2\overline\psi_{xx}\right)\\
	&\leq C\left(\,\int_{0}^{2\pi}|\psi_x|^2+\int_0^{2\pi}|\varphi_{xx}|^2\right)+2\,\Re\left(\int_{0}^{2\pi}h_2\overline\psi_{xx}\right).
	\end{align}	
	On adding equations \eqref{eq1} and \eqref{eq2}, we get:
	\begin{align}
	\nonumber &\dfrac{d}{dt}\int_0^{2\pi}(\,|\varphi_{xx}|^2+|\psi_x|^2)+2(1-\epsilon)\int_0^{2\pi}(|\varphi_{xxxx}|^2+|\psi_{xx}|^2)\leq C\left(\,\int_{0}^{2\pi}|\varphi_{xx}|^2+\int_{0}^{2\pi}|\psi_{x}|^2\right)\\
	\label{ineq1}&\hspace{90 mm} +2\,\Re\left(\int_0^{2\pi}h_1\overline\varphi_{xxxx}\right)+2\,\Re\left(\int_{0}^{2\pi}h_2\overline\psi_{xx}\right).
	\end{align}
	\item[Case 1: $(h_1,h_2)\in L^2(0,T;L^2(0,2\pi)).$] 
	Using Young's inequality in R.H.S of \eqref{ineq0}, we obtain
	\begin{align*} \dfrac{d}{dt}\int_0^{2\pi}(\,|\varphi|^2+|\psi|^2)+\int_0^{2\pi}|\varphi_{xx}|^2+2\int_0^{2\pi}|\psi_{x}|^2\leq 2\int_0^{2\pi}(|\varphi|^2+|\psi|^2)+\int_0^{2\pi} \left(|h_1|^2+|h_2|^2\right).
	\end{align*}
	Multiplying the inequality by $e^{-3t}$ and then integrating over $[0,s]\subset [0,T]$, we get:
	\begin{align}\label{new}
	\nonumber\int_0^{2\pi}\left(|\varphi(s,\cdot)|^2+|\psi(s,\cdot)|^2\right)+&\int_0^s\int_0^{2\pi}(|\varphi|^2+|\psi|^2)\\&\leq C \left(\int_0^T\int_0^{2\pi} (|h_1|^2+|h_2|^2)+\int_0^{2\pi}|\varphi_T|^2+\int_0^{2\pi}|\psi_T|^2\right).
	\end{align}
Using the inequality $\Re(z)\leq |z|$, for $z\in\cplx$ and Young's inequality in the last two integrals of \eqref{ineq1}, we get
	\begin{align}\label{eq}
	\nonumber&\dfrac{d}{dt}\int_0^{2\pi}(\,|\varphi_{xx}|^2+|\psi_x|^2)+\int_0^{2\pi}(\,|\varphi_{xxxx}|^2+|\psi_{xx}|^2)\\&\hspace{30 mm}\leq C\left(\,\int_{0}^{2\pi}|\varphi_{xx}|^2+\int_{0}^{2\pi}|\psi_{x}|^2\right) +\int_{0}^{2\pi}(\,|h_1|^2+|h_2|^2).
	\end{align}
	Multiplying \eqref{eq} by $e^{-Ct}$ and then integrating w.r.t $t$ over $[0,s]\subset [0,T]$, for all $s\in [0,T]$ we get
	\begin{align}\label{first}
	\int_0^{2\pi}(\,|\varphi_{xx}(s,\cdot)|^2+|\psi_x(s,\cdot)|^2)\leq C\left(\,\int_0^T\int_{0}^{2\pi}(|h_1|^2+|h_2|^2)+\int_0^{2\pi}(\,|(\varphi_T)_{xx}|^2+|(\psi_T)_x|^2)\right).
	\end{align}
	Now integrating \eqref{eq} on $[0,s]\subset [0,T]$, and using \eqref{first}, we obtain
	\begin{align}\label{second}
	\int_0^s\int_{0}^{2\pi}|(\varphi_{xxxx}|^2+|\psi_{xx}|^2)\leq C\left(\int_0^T\int_{0}^{2\pi}(|h_1|^2+|h_2|^2)+\int_0^{2\pi}(\,|(\varphi_T)_{xx}|^2+|(\psi_T)_x|^2)\right).
	\end{align}
	Adding the inequalities \eqref{new}, \eqref{first} and \eqref{second} to get
	\begin{align*}
	&\int_0^{2\pi}(|\varphi(s,\cdot)|^2+|\psi(s,\cdot)|^2)+\int_0^{2\pi}(\,|\varphi_{xx}(s,\cdot)|^2+|\psi_x(s,\cdot)|^2)+\int_0^s\int_0^{2\pi}(|\varphi|^2+|\psi|^2)\\
	&\quad \quad\quad+\int_0^s\int_{0}^{2\pi}|(\varphi_{xxxx}|^2+|\psi_{xx}|^2)\leq C\left( \int_0^T\int_0^{2\pi} (|h_1|^2+|h_2|^2)+||\varphi_T||^2_{H^2_{per}}+||\psi_T||^2_{H^1_{per}}\right).
	\end{align*}
	On taking supremum over $s \in[0,T]$ and using the equivalence of sobolev norms, we get 
	\begin{align*}
	||(\varphi,\psi)||_{L^2(0,T;H^4\times H^2)\cap L^{\infty}(0,T;H_{per}^2\times H^1_{per})}\leq C\left( ||(h_1,h_2)||_{(L^2(0,T;L^2(0,2\pi)))^2}+||(\varphi_T,\psi_T)||_{H_{per}^2\times H_{per}^1}\right).
	\end{align*}
	\item[Case 2: $(h_1,h_2)\in L^1(0,T;H_{per}^2(0,2\pi)\times H^1_{per}(0,2\pi))$.] 
	From \eqref{ineq0}, we have:
	\begin{align*}
		\nonumber\dfrac{d}{dt}\int_0^{2\pi}(\,|\varphi|^2+|\psi|^2)&+\int_0^{2\pi}|\varphi|^2+\int_0^{2\pi}|\psi|^2+\int_0^{2\pi}|\varphi_{xx}|^2+2\int_0^{2\pi}|\psi_{x}|^2\\&\quad\quad\leq 2\left( \int_0^{2\pi}|\varphi|^2+\int_0^{2\pi}|\psi|^2\right)+2\int_0^{2\pi} |h_1\overline\varphi|+2\int_0^{2\pi}|h_2\overline\psi|	.
	\end{align*}
	Multiplying the inequality by $e^{-2t}$ and then integrating w.r.t over $[0,s]\subset[0,T]$, we write
	\begin{align}
	\nonumber&\int_0^{2\pi}(\,|\varphi(s,\cdot)|^2+|\psi(s,\cdot)|^2)+\int_0^s\intg |\varphi|^2+\int_0^s\intg |\psi|^2\\\nonumber&\hspace{33mm}\leq C\left(\int_0^T\int_0^{2\pi} |h_1\overline\varphi|+\int_0^T\int_0^{2\pi}|h_2\overline\psi|+\intg(|\varphi_T|^2+|\psi_T|^2)\right)\\&\hspace{33 mm}\leq C\left(||h_1||_{L^1(L^2)}||\varphi||_{L^{\infty}(L^2)}+||h_2||_{L^1(L^2)}||\psi||_{L^{\infty}(L^2)}+\intg(|\varphi_T|^2+|\psi_T|^2)\right).
	\end{align}
	Taking supremum in $s$ over $[0,T]$, we obtain
	\begin{align}\label{main}
		\nonumber||\varphi||^2_{L^{\infty}(L^2)}+||\psi||^2_{L^{\infty}(L^2)}&+||\varphi||^2_{L^2(L^2)}+||\psi||^2_{L^2(L^2)}\\&\leq C\left(||h_1||_{L^1(L^2)}||\varphi||_{L^{\infty}(L^2)}+||h_2||_{L^1(L^2)}||\psi||_{L^{\infty}(L^2)}+\intg(|\varphi_T|^2+|\psi_T|^2)\right).
	\end{align}
	Note that 
	\begin{align*}
		&||h_1||_{L^1(L^2)}||\varphi||_{L^{\infty}(L^2)}+||h_2||_{L^1(L^2)}||\psi||_{L^{\infty}(L^2)}\\&\hspace{40mm}\leq \left(||h_1||_{L^1(L^2)}+||h_2||_{L^1(L^2)}\right)\left(||\varphi||_{L^{\infty}(L^2)}+||\psi||_{L^{\infty}(L^2)}\right),\\
		\text{and}& \left(\intg|\varphi_T|^2+|\psi_T|^2\right)^{\frac{1}{2}}\leq ||\varphi||_{L^{\infty}(L^2)}+||\psi||_{L^{\infty}(L^2)}.
	\end{align*}
	So, using these in the last inequality \eqref{main}, we get
	\begin{align}\label{first0}
		&\nonumber\left(||\varphi||_{L^{\infty}(L^2)}+||\psi||_{L^{\infty}(L^2)}\right)^2%\leq 2\left(||\varphi||^2_{L^{\infty}(L^2)}+||\psi||^2_{L^{\infty}(L^2)}\right)
		\\\nonumber&\hspace{17 mm}\leq C \left(||\varphi||_{L^{\infty}(L^2)}+||\psi||_{L^{\infty}(L^2)}\right)\left(||h_1||_{L^1(L^2)}+||h_2||_{L^1(L^2)}+||\varphi_T||_{L^2}+||\psi_T||_{L^2}\right), \\
		\text{thus, }&\,\,||\varphi||_{L^{\infty}(L^2)}+||\psi||_{L^{\infty}(L^2)}\leq C\left(\, ||h_1||_{L^1(L^2)}+||h_2||_{L^1(L^2)}+||\varphi_T||_{L^2}+||\psi_T||_{L^2}\right).
	\end{align}
	From \eqref{main} and using the relation $a^2+b^2\leq (a+b)^2\leq 2(a^2+b^2)$ for $a,b\geq0$, we also have
	\begin{align}\label{first1}
		\nonumber&\left(||\varphi||_{L^2(L^2)}+||\psi||_{L^2(L^2)}\right)^2\\&\quad\quad\quad\quad\leq\nonumber C\left(||h_1||_{L^1(L^2)}||\varphi||_{L^{\infty}(L^2)}+||h_2||_{L^1(L^2)}||\psi||_{L^{\infty}(L^2)}+\intg(|\varphi_T|^2+|\psi_T|^2)\right)\\\nonumber
		&\quad\quad\quad\quad\leq C\left(||h_1||_{L^1(L^2)}+||h_2||_{L^1(L^2)}||\right)\left(||\varphi||_{L^{\infty}(L^2)}+||\psi||_{L^{\infty}(L^2)}\right)+\left(||\varphi||^2_{L^{\infty}(L^2)}+||\psi||^2_{L^{\infty}(L^2)}\right)\\&\nonumber
		\quad\quad\quad\quad\leq C\left(||\varphi||_{L^{\infty}(L^2)}+||\psi||_{L^{\infty}(L^2)}\right) \left(||\varphi||_{L^{\infty}(L^2)}+||\psi||_{L^{\infty}(L^2)}+||h_1||_{L^1(L^2)}+||h_2||_{L^1(L^2)}||\right)\\&\quad\quad\quad\quad\nonumber
		\leq C\left(\, ||h_1||_{L^1(L^2)}+||h_2||_{L^1(L^2)}+||\varphi_T||_{L^2}+||\psi_T||_{L^2}\right)^2\\&
		\text{and so, } ||\varphi||_{L^2(L^2)}+||\psi||_{L^2(L^2)}\leq C\left(\, ||h_1||_{L^1(L^2)}+||h_2||_{L^1(L^2)}+||\varphi_T||_{L^2}+||\psi_T||_{L^2}\right)
	\end{align}
	Thus combining \eqref{first0} and \eqref{first1}, we have
	\begin{align}\label{first2}
		\nonumber ||\varphi||_{L^{\infty}(L^2)}+||\psi||_{L^{\infty}(L^2)}+||\varphi||_{L^2(L^2)}&+||\psi||_{L^2(L^2)}\\&\leq C\left(\, ||h_1||_{L^1(L^2)}+||h_2||_{L^1(L^2)}+||\varphi_T||_{L^2}+||\psi_T||_{L^2}\right)
	\end{align}

	Performing integration by parts in \eqref{ineq1}, we get:
	\begin{align}\label{eq0}
	\nonumber \dfrac{d}{dt}\int_0^{2\pi}(\,|\varphi_{xx}|^2+&|\psi_x|^2)+\int_0^{2\pi}|\varphi_{xxxx}|^2+\int_0^{2\pi}|\psi_{xx}|^2\\& \nonumber \leq C\left(\,\int_{0}^{2\pi}|\varphi_{xx}|^2+\int_{0}^{2\pi}|\psi_{x}|^2\right)
	+2\,\Re\left(\int_{0}^{2\pi}(h_1)_{xx}\varphi_{xx}\right)+2\,\Re\left(\int_{0}^{2\pi}(h_2)_x\psi_{x}\right)\\&\leq C\left(\,\int_{0}^{2\pi}|\varphi_{xx}|^2+\int_{0}^{2\pi}|\psi_{x}|^2\right)
	+2\int_{0}^{2\pi}|(h_1)_{xx}\varphi_{xx}|+2\int_{0}^{2\pi}|(h_2)_x\psi_{x}|.
	\end{align}
	Again multiplying the inequality by $e^{-Ct}$,  integrating w.r.t $t$ over $[0,s]\subset [0,T]$ and then taking supremum over $s\in [0,T]$, we get
	\begin{align}\label{second0}
	\nonumber &\sup_{s\in[0,T]}\int_0^{2\pi}(\,|\varphi_{xx}(s,\cdot)|^2+|\psi_x(s,\cdot)|^2)\\ &\nonumber\hspace{15mm}\leq C\left(\int_0^T\int_{0}^{2\pi}|(h_1)_{xx}\varphi_{xx}|+\int_0^T\int_{0}^{2\pi}|(h_2)_x\psi_{x}|\right)
	+\int_0^{2\pi}(\,|(\varphi_T)_{xx}|^2+|(\psi_T)_x|^2)\\\nonumber
	&\hspace{15mm}\leq C\left(||(h_1)_{xx}||_{L^1(L^2)}||(\varphi)_{xx}||_{L^{\infty}(L^2)}+||(h_2)_{x}||_{L^1(L^2)}||(\psi)_{x}||_{L^{\infty}(L^2)}\right)\\&\hspace{72 mm}+\int_0^{2\pi}(\,|(\varphi_T)_{xx}|^2+|(\psi_T)_x|^2).		\end{align}	
	Using similar analysis as done above, we get:
	\begin{align}\label{second1}
	||\varphi_{xx}||_{L^{\infty}(L^2)}+||\psi_{x}||_{L^{\infty}(L^2)}\leq C(\,||(h_1)_{xx}||_{L^1(L^2)}+||(h_2)_{x}||_{L^1(L^2)}+||(\varphi_T)_{xx}||_{L^2}+||(\psi_T)_x||_{L^2}).
	\end{align}			
	Integrating \eqref{eq0} w.r.t $t$ over $[0,T]$ and using inequality \eqref{second1}, we get
	\begin{align}\label{second2}
	||\varphi_{xxxx}||_{L^2(L^2)}+||\psi_{xx}||_{L^2(L^2)}\leq C(\,||(h_1)_{xx}||_{L^1(L^2)}+||(h_2)_{x}||_{L^1(L^2)}+||(\varphi_T)_{xx}||_{L^2}+||(\psi_T)_x||_{L^2}).
	\end{align}	
	On adding \eqref{first1}, \eqref{second1} and \eqref{second2}, we get:
	\begin{align*}
	||(\varphi,\psi)||_{L^2(0,T;H^4\times H^2)\cap L^{\infty}(0,T;H_{per}^2\times H^1_{per})}\leq C\left( ||(h_1,h_2)||_{L^1(0,T;H_{per}^2\times H^1_{per})}+||(\varphi_T,\psi_T)||_{H_{per}^2\times H_{per}^1}\right).
	\end{align*}
	
	Thus, combining both cases together we have
	\begin{align*}
	\left|\left|(\varphi,\psi)\right|\right|_{L^{\infty}([0,T],H_{per}^2\times H_{per}^1)\cap L^2(0,T;H^4\times H^2)} \leq C\left(||(h_1,h_2)||_{\mathbf{X}}+\left|\left|(\varphi_T,\psi_T)\right|\right|_{H_{per}^2\times H_{per}^1}\right), 
	\end{align*}
	for some $C>0$.
	
	The solution $(\varphi,\psi)\in L^2(0,T;H^4\times H^2)$, so from the equation we get $(\varphi_t,\psi_t)\in L^2(0,T;L^2\times L^2)$ and hence by the classical properties of these spaces we get $(\varphi,\psi)\in C([0,T];H^2_{per}\times H_{per}^1)$ and hence the proof is complete.
\end{proof}
	
\bibliographystyle{plain}
\bibliography{Stabilized_KS_moment_.bib}
\end{document}